\documentclass[11pt]{amsart}

\usepackage{hyperref}
\hypersetup{
   bookmarks=true,         
   unicode=false,          
   pdftoolbar=true,        
   pdfmenubar=true,        
   pdffitwindow=false,     
   pdfstartview={FitH},    
   pdftitle={My title},    
   pdfauthor={Author},     
   pdfsubject={Subject},   
   pdfcreator={Creator},   
   pdfproducer={Producer}, 
   pdfkeywords={keywords}, 
   pdfnewwindow=true,      
   colorlinks=true,       
   linkcolor=blue,          
   citecolor=blue,        
   filecolor=magenta,      
   urlcolor=MidnightBlue          
}

\textheight 21cm \textwidth 16cm 
\topmargin -1.4cm 
\oddsidemargin 0.4cm \evensidemargin 0.4cm

\usepackage[babel=true,kerning=true]{microtype}

\usepackage{amsthm}
\usepackage{graphicx, ifpdf, comment}
\usepackage{amsfonts,amsmath,amssymb,enumerate}
\usepackage{tikz}
\allowdisplaybreaks
\usetikzlibrary{decorations.markings}
\usetikzlibrary{plotmarks}
\usetikzlibrary{patterns}

\newcommand{\be}{\begin{equation*}}
\newcommand{\ee}{\end{equation*}}
\newcommand{\beq}{\begin{equation}}
\newcommand{\eeq}{\end{equation}}
\newcommand{\begincal}{\begin{eqnarray*}}
\newcommand{\fincal}{\end{eqnarray*}}

\newtheorem{thm}{Theorem}[section]
\newtheorem{lemma}[thm]{Lemma}
\newtheorem{cor}[thm]{Corollary}
\newtheorem{prop}[thm]{Proposition}
\newtheorem{defi}[thm]{Definition}

\newtheorem{rem}[thm]{Remark}
\newtheorem*{remark*}{Remark}

\newcommand{\eps}{{\varepsilon}}
\newcommand{\R}{{\mathbf R}}

\def\al{\alpha}

\def\eps{\varepsilon}

\def\Om{\Omega}

\def\p{\partial}

\newcommand{\mc}{\mathcal}
\newcommand{\mb}{\mathbf}

\newcommand{\dist}{\operatorname{dist}}

\newcommand{\cN}{\mathcal N}

\DeclareMathOperator{\vol}{Vol}
\DeclareMathOperator{\area}{Area}

\begin{document}

\title[Min-max minimal disks with free boundary in Riemannian manifolds]{Min-max minimal disks with free boundary in Riemannian manifolds}

\author{Longzhi Lin}
\address{Mathematics Department\\University of California - Santa Cruz, 1156 High Street\\ Santa Cruz, CA 95064\\USA}
\email{lzlin@ucsc.edu}

\author{Ao Sun}
\address{Department of Mathematics\\Massachusetts Institute of Technology\\ 77 Massachusetts Avenue\\Cambridge, MA 02139\\USA}
\email{aosun@mit.edu}

\author{Xin Zhou}
\address{Department of Mathematics\\University of California - Santa Barbara\\ Santa Barbara, California 93106\\USA}
\email{zhou@math.ucsb.edu}

\date{\today}

\subjclass[2010]{Primary 49Q05, 49J35, 35R35, 53C43}

\maketitle
\begin{abstract}
In this paper, we establish a min-max theory for constructing minimal disks with free boundary in any closed Riemannian manifold. The main result is an effective version of the partial Morse theory for minimal disks with free boundary established by Fraser. Our theory also includes as a special case the min-max theory for Plateau problem of minimal disks, which can be used to generalize the famous work by Morse-Thompkins and Shiffman on minimal surfaces in $\mathbf{R}^n$ to the Riemannian setting.\vskip 1.5mm

More precisely, we generalize the min-max construction of minimal surfaces using harmonic replacement introduced by Colding and Minicozzi to the free boundary setting. As a key ingredient to this construction, we show an energy convexity for weakly harmonic maps with mixed Dirichlet and free boundaries from the half unit $2$-disk in $\mathbf{R}^2$ into any closed Riemannian manifold, which in particular yields the uniqueness of such weakly harmonic maps. This is a free boundary analogue of the energy convexity and uniqueness for weakly harmonic maps with Dirichlet boundary on the unit $2$-disk proved by Colding and Minicozzi. 
\end{abstract}

\tableofcontents

\setcounter{section}{-1}

\section{Introduction}
Given a closed Riemannian manifold $\mathcal{N}^n$ (that is isometrically embedded in $\mathbf{R}^N$) and an embedded submanifold $\Gamma$ of co-dimension $l\geq 1$, a map from the unit $2$-disk $D\subset \mathbf{R}^2$ into $\mathcal{N}$ with boundary lying on $\Gamma$ is said to be a {\em minimal disk with free boundary} if it minimizes area up to the first order among all such maps. One physical model of such surfaces is the soap film whose boundary is constrained (but allowed to move freely) on the boundary of some smooth domain in $\R^3$. When a soap film achieves the equilibrium state, it will minimize the area up to first order. Geometrically, the stationary soap film would have vanishing mean curvature in the interior and meet the boundary of the given domain orthogonally. Therefore, the orthogonality condition at the boundary is called the {\em free boundary} condition. \vskip 1.5mm

After earlier works of Gergonne in 1816 and H. Schwarz in 1890, Courant first studied systematically the free boundary problems for minimal surfaces in a series of seminal papers (see \cite[Chapter VI]{Courant} and also \cite{Courant-Davids40}).  In particular, together with Davids, they proved that given an embedded closed surface $S$ in $\R^3$ other than the sphere, there exists a minimal disk $\Sigma$ with free boundary on $S$ under certain linking conditions; (see \cite[p.213-218]{Courant}). Since then, there have been immense research activities on this topic. To remove the topological assumption on the constraint surface $S$, Smyth \cite{Smyth84} showed that if $S$ is the boundary of a tetrahedron (which is \emph{non-smooth}), then there must exist exactly three minimal disks embedded inside the tetrahedron satisfying the free boundary condition. When $S$ is a smooth topological two-sphere, Struwe \cite{Struwe84} used the mountain pass lemma to establish the existence of at least one \emph{unstable} minimal disk with free boundary lying on $S$. In higher dimensions and co-dimensions (for $(\mathcal{N}, \Gamma)$), Ye \cite{Ye91a} obtained the existence of an area minimizing disk with free boundary when the kernel of $\pi_1(\Gamma)\to \pi_1(\mathcal{N})$ is non-trivial. About 20 years ago, Fraser \cite{Fraser00} developed a partial Morse theory for finding minimal disks with free boundary in any co-dimensions using the perturbed energy approach by Sacks-Uhlenbeck \cite{Sacks-Uhlenbeck81}, and proved the existence of solutions with bounded Morse index when the relative homotopy group $\pi_k(\mathcal{N}, \Gamma)$ is non-trivial for some $k\in\mathbb{N}$. Recently, Fraser-Schoen \cite{Fraser-Schoen11, Fraser-Schoen16} established deep relations between the minimal surfaces with free boundary in round balls and the extremal eigenvalue problems. \vskip 1.5mm

In this paper, we develop a direct variational theory for constructing minimal disks with free boundary for any pair $(\mathcal{N}, \Gamma)$ using min-max method. In particular, given a $k$-parameter family of mappings from the unit disk $D$ into $\mathcal{N}$, such that $\partial D$ are mapped into $\Gamma$, one can associate with it a min-max value analogous to the classical Morse theory. We prove that if the min-max value is non-trivial, then there exists a minimal disk together with (possibly empty) several minimal spheres (possibly with a puncture), which is usually called a \textit{bubble tree} and whose areas sum up to be the min-max value (the so-called \textit{energy identity}). Moreover, we prove that every approximate sequence of maps converges to a bubble tree such that the energy identity holds true. By reflecting on this strong convergence property, our results can be viewed as an effective version of Fraser \cite{Fraser00}; (see more discussion in Remark \ref{R:remark for main1} (2)). \vskip 1.5mm

Moreover, our theory can be used to generalize the famous work of Morse-Thompkins \cite{Morse-Tompkins39} and Shiffman \cite{Shiffman39} on minimal surfaces in $\R^n$ to the Riemannian setting (Theorem \ref{T:main2}). Particularly, take $\Gamma$ to be a Jordan curve in $\mathcal{N}$, and assume that $\Gamma$ bounds two different strictly minimizing minimal disks $\bar v_i:D\to \cN, i=1,2$ so that $\bar v_i\vert_{\partial D}: \partial D \to \Gamma$ is a monotone parametrization, our theory produces finitely many harmonic disks $u_k$'s (and minimal spheres), such that when restricted to $\partial D$, only one of which will have degree $1$ and all others will have degree $0$. If additionally one has that $u_k\vert_{\partial D}: \partial D \to \Gamma$ are monotone parametrizations, then in the special case when $\mathcal{N}$ has non-positive curvature (so that there exists no punctured minimal spheres), our theory produces a third \textit{non-minimizing} minimal disk, and therefore it provides a direct generalization of the work of Morse-Tompkins \cite{Morse-Tompkins39} and Shiffman \cite{Shiffman39}; see Section \ref{S:Necessary modifications for the proof of fixed boundary min-max} for more details.\vskip 1.5mm

Another novelty of this paper is reflected by our constructive method. The \textit{Schwarz alternating method} introduced by H. Schwarz goes back to the late 1860's, and later it was generalized to an iterative method for finding the solution of an elliptic PDE on a domain which is the union of two overlapping subdomains. In \cite{CM08}, Colding and Minicozzi adapted this method and used the \textit{harmonic replacement} to construct min-max minimal surfaces. During this repeated replacement procedure, at each step one replaces a map $u$ by a map $\tilde{u}$ that coincides with $u$ outside a disk and inside the disk is equal to an energy-minimizing map with the same boundary values as $u$. A key ingredient to this construction is a version of energy convexity for weakly harmonic maps with Dirichlet boundary and small energy on the unit $2$-disk $D$, which also yields the (quantitative) uniqueness for such weakly harmonic maps; see also Lamm and the first author \cite{LL13}. In this paper, we generalize this min-max construction of Colding-Minicozzi using harmonic replacement to construct minimal disks with free boundary in any closed Riemannian manifold. To this end, we will show an energy convexity for weakly harmonic maps with mixed Dirichlet and free boundaries from the half unit $2$-disk into any closed Riemannian manifold (Theorem \ref{T:convexity2}), which is the free boundary analogue of Colding-Minicozzi's energy convexity and uniqueness for weakly harmonic maps. We shall remark that a priori it is not at all clear if such energy convexity should hold true due to the complication of the free boundary component of the map. The key to this is an $\epsilon$-regularity (gradient estimate) for weakly harmonic maps with mixed Dirichlet and free boundaries on the half unit $2$-disk (Theorem \ref{e-reg}).\vskip 1.5mm

Now we proceed to present the precise mathematical statements of our main results. To make the presentation simpler, we will focus on 1-parameter min-max constructions, though our results extend in a straightforward manner to $k$-parameters. We will use $[0, 1]$ as the parameter space. Consider the total variational space:
\[ \Omega =\left\{\sigma: D\times[0,1]\to \cN, \text{ such that: } \begin{array}{l} \sigma: [0, 1] \to C^0(\overline{D}, \cN) \cap W^{1,2}(D, \cN) \text{ is continuous, }\\
                             \text{and } \sigma(\cdot, t)(\partial D)\subset \Gamma, \,\forall \,t\in [0,1],\\
                             \text{and } \sigma(\cdot, 0), \sigma(\cdot, 1) \text{ are constant maps}.            
                    \end{array}\right\}. \]
Each $\beta\in\Omega$ will be called a {\em sweepout}. Given a map $\beta \in\Omega$, we define $\Omega_\beta$ to be the homotopy class of $\beta$ in $\Omega$.
\vskip 1.5mm
Here and in the following, we denote $E(\cdot)$ and $\area(\cdot)$ as the Dirichlet energy and area functionals on $C^0(\overline{D}, \cN) \cap W^{1,2}(D, \cN)$. Associated to each homotopy class $\Omega_\beta$, there is a min-max value, also called {\em width} of $\Omega_\beta$:
\begin{equation}\label{E:area width1}
    W=W(\Om_\beta):=\inf_{\gamma\in\Omega_\beta} \max_{s\in [0, 1]}\area(\gamma(\cdot, s)).
\end{equation}

As the first main result, we establish a direct variational construction for minimal surfaces associated with this critical value $W$.
\begin{thm}
\label{T:main1}
Given $\beta\in\Omega$ with $W = W(\Om_\beta)>0$, there is a sequence of sweepouts $\gamma^j\in\Omega_\beta$ with $\max_{s\in[0,1]}E(\gamma^j(\cdot, s))\to W$, such that for any given $\epsilon>0$, there exist $\bar j$ and $\delta>0$ so that if $j>\bar j$ and 
\[\area(\gamma^j(\cdot,s))>W-\delta,\]
then there exist finitely many harmonic maps $u_k: D\to \cN$ with $u_k(\partial D)\subset \Gamma$ and finitely many (possibly empty) harmonic maps $\hat{u}_l: \mathbf{S}^2\to \cN$, so that
\[d_V(\gamma^j(\cdot,s), \cup_{k}\{u_k\}\cup_{l}\{\hat{u}_l\}) \to 0, \text{ as } j\to \infty. \]
Here we have identified each map $u_i$ with the varifold associated to the map, and $d_V$ denotes the varifold distance. Moreover we have the energy identity
\[ \sum_k \area(u_k)+ \sum_l \area(\hat{u}_l)=W. \]
\end{thm}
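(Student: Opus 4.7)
The plan is to adapt the harmonic replacement scheme of Colding--Minicozzi \cite{CM08} to the free boundary setting, using the energy convexity for weakly harmonic maps with mixed Dirichlet/free boundary (Theorem \ref{T:convexity2}) in place of the Dirichlet convexity. First, starting from an arbitrary minimizing sequence $\tilde\gamma^j \in \Omega_\beta$ with $\max_{s}\area(\tilde\gamma^j(\cdot,s)) \to W$, I would produce a ``tightened'' sequence $\gamma^j\in\Omega_\beta$ by iterated harmonic replacement on a fixed finite cover of $\overline{D}$ consisting of interior disks $B_r(x_i)$ and half-disks $B_r^+(y_j)$ centered on $\partial D$. On each interior disk I replace the map by the small-energy harmonic extension with the same Dirichlet boundary values (existence/uniqueness from Colding--Minicozzi); on each boundary half-disk I replace it by the unique energy-minimizer with the same Dirichlet values on the curved part of $\partial B_r^+(y_j)$ and free boundary on the flat part lying in $\partial D$ (existence/uniqueness from Theorem \ref{T:convexity2}). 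Composing these finitely many replacements produces a new sweepout in the same homotopy class with pointwise non-increasing energy; continuity in $s\in[0,1]$ of the modified family follows from the quantitative uniqueness contained in Theorem \ref{T:convexity2} together with an interpolation between replacements of neighboring slices.

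Next, I would prove a quantitative ``almost harmonic'' dichotomy: for every $\epsilon>0$ there exist $\bar j,\delta>0$ such that whenever $j>\bar j$ and $\area(\gamma^j(\cdot,s))>W-\delta$, the slice $\gamma^j(\cdot,s)$ is within Dirichlet energy $\epsilon$ of being harmonic on each element of the cover. This is a contradiction argument: if on some disk or half-disk the map differed from its harmonic replacement by a definite amount, energy convexity would force a definite drop in energy, and since $\area \leq E$ with equality for conformal harmonic maps, the perturbed sweepout would have strictly smaller $\max_s\area$, contradicting that $\gamma^j$ is (nearly) achieving the infimum $W$.

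The remaining step is bubble-tree extraction and the energy identity. On each piece of the cover where the energy of a near-maximal slice remains small, the interior $\epsilon$-regularity of \cite{CM08} and the boundary $\epsilon$-regularity of Theorem \ref{e-reg} give uniform $C^1$ bounds and hence strong subsequential convergence to a harmonic map (a free-boundary disk piece on half-disk pieces, interior harmonic piece otherwise). At points where the energy concentrates above the small-energy threshold, I perform a standard rescaling: if the concentration point is interior, the rescaled maps converge to a nonconstant harmonic sphere $\hat u_l:\mathbf{S}^2\to\cN$; if it lies on $\partial D$, the rescaling of the half-space geometry yields either a harmonic map of the upper half plane with free boundary on $\Gamma$ (which by removable singularity and conformal reparametrization gives a free-boundary disk $u_k$) or a harmonic sphere that escapes the boundary. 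Finite energy implies only finitely many bubbles. The energy identity $\sum_k\area(u_k)+\sum_l\area(\hat u_l)=W$ is reduced to a no-neck-energy statement on the annular regions separating the bubbles, which follows from applying Theorem \ref{e-reg} (near $\partial D$) or its interior counterpart on dyadic annuli where the energy is small, combined with a three-annulus decay estimate of Pohozaev type.

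The main obstacle, and the essentially new difficulty compared to the Dirichlet case of \cite{CM08}, is the bubble analysis near $\partial D$: the almost-harmonic sequence must be controlled on half-annular necks straddling the boundary, where one loses the $L^2$ orthogonality enjoyed by interior harmonic maps unless the free boundary condition is used carefully. The boundary $\epsilon$-regularity (Theorem \ref{e-reg}) is exactly what allows a uniform pointwise gradient estimate on such half-annular necks, and combining it with the boundary energy convexity (Theorem \ref{T:convexity2}) to iterate dyadically is the technical heart of the argument. A secondary subtlety is to preserve, at every step of harmonic replacement, both continuity in the parameter $s$ and the topological condition $\sigma(\cdot,s)(\partial D)\subset\Gamma$; the latter is automatic because the boundary replacement on half-disks keeps the free boundary inside $\Gamma$, and the former is guaranteed by the quantitative uniqueness in Theorem \ref{T:convexity2}.
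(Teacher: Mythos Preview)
Your overall architecture matches the paper's: tighten the sweepouts by harmonic replacement (interior and half-disk), derive an ``almost harmonic'' property for near-maximal slices, then run a bubble-tree compactness argument with energy identity. But there is a genuine gap at the very start that undermines the contradiction step you describe.

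You begin with a sequence satisfying $\max_s\area(\tilde\gamma^j(\cdot,s))\to W$ and tighten by harmonic replacement. Harmonic replacement decreases \emph{energy}, not area, and in general $E(\gamma^j(\cdot,s))$ may be far larger than $\area(\gamma^j(\cdot,s))$. In your dichotomy you argue: if a near-maximal-\emph{area} slice were far from its harmonic replacement, a further replacement would drop its energy by a definite amount and hence (via $\area\le E$) drop $\max_s\area$ below $W$. That implication is false: lowering $E(\gamma^j(\cdot,s_0))$ from, say, $100$ to $100-c$ says nothing about the area, which was already near $W$. The paper closes this gap with a \emph{conformal reparametrization} step (Theorem~\ref{T:area width equals energy width}) producing a sequence with $\max_s E(\tilde\gamma^j(\cdot,s))\to W$ as well; only then does the squeeze $W-\delta<\area\le E\le W+1/j$ force $E(\tilde\gamma^j(\cdot,s))-E(\gamma^j(\cdot,s))\to 0$, which via the quantitative bound $\int|\nabla\gamma-\nabla v|^2\le\Psi\big(E(\tilde\gamma)-E(\gamma)\big)$ of Theorem~\ref{T:Tightening theorem} yields the almost-harmonic property. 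You need this step.

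Two secondary issues. First, tightening on a \emph{fixed} finite cover is not what the paper does and is insufficient for the bubble extraction: you need the almost-harmonic property on \emph{every} finite disjoint collection of generalized balls with total energy $<\varepsilon_1$ (property~$(\dagger)$ in Theorem~\ref{T:Compactness theorem}), because the blowup rescales to arbitrarily small scales. The paper's two-stage replacement (Theorem~\ref{T:Tightening theorem}, driven by Lemma~\ref{lem3.8}) is engineered precisely to propagate the estimate from the chosen balls to all such collections. Second, your no-neck argument via ``Pohozaev decay on dyadic annuli'' is not the mechanism used here: the paper shows that on a long half-cylinder neck the $\theta$-energy is a tiny fraction of the total energy (Theorem~\ref{T: Theta energy small for almost harmonic maps}), so by Lemma~\ref{L:comparison between area and energy on long cylinders} any neck with definite energy would satisfy $\area\le\tfrac{8}{9}E$ there. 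Since the conformal reparametrization forces $\area(u^j)>E(u^j)-1/j$ globally, this rules out neck energy. That area--energy comparison is again fed by the missing conformal step, so this gap propagates through to your energy identity as well.
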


\begin{rem}\label{R:remark for main1}
We have the following remarks ready.
\begin{enumerate}
\item By the work of Sacks-Uhlenbeck \cite{Sacks-Uhlenbeck81} and Fraser \cite{Fraser00}, we know that these harmonic maps are conformal and hence parametrize minimal disks with free boundary on $\Gamma$ or minimal spheres.
\vskip 2mm
\item Our result is an effective version of Fraser's result \cite{Fraser00} in the sense that we obtain a strong convergence property. In particular, a sequence of maps $\{\gamma^{j_k}(\cdot, s_k)\}$ is usually called a min-max sequence, if $$\lim_{k\to\infty}\area(\gamma^{j_k}(\cdot, s_k))=W\,.$$ We prove that every min-max sequence will sub-converge to a set of minimal disks with free boundary and possibly some minimal spheres in the varifold sense. The essential ingredient is to prove that all the min-max sequences converge to a bubble tree limit and the energy identity holds.  Note that the energy identity has caught a lot of attention from mathematicians in conformally invariant variational problems, see e.g. \cite{Jost91, Parker-Wolfson93, Ding-Tian95, Parker96, Qing-Tian97, Chen-Tian99, Ding-Li-Liu06}; for quantification results for harmonic maps with free boundary, see e.g.  \cite{LP17, JLZ16} . A similar strong convergence property was first proven by Colding-Minicozzi in \cite{CM08} for the min-max construction of minimal spheres, and it played an essential role in their proof of the finite time extinction for certain 3-dimensional Ricci flow.  Similar property was also obtained by the last author for the min-max construction of closed minimal surfaces of higher genus \cite{Zhou10, Zhou17}, and by Rivi\'ere for min-max construction of closed minimal surfaces via viscosity method \cite{Riviere17}. To the authors' knowledge, our work is the first occasion to obtain such a strong property in the context of free boundary problems.
\vskip 2mm
\item  As a special case of our result, one can take $(\mc N, \Gamma)$ to be a compact manifold with convex boundary $(M, \partial M)$. Using the convex boundary as barriers, our theory applies in this case and the resulting minimal disks with free boundary on $\partial M$ and minimal spheres will all lie inside $M$.
\end{enumerate}
\end{rem}

Our main result has almost a direct corollary for a min-max construction of minimal disks with fixed boundary. In particular, we now assume $\Gamma$ to be a Jordan curve in $\mathcal{N}$. Suppose $\bar v_0:D\to \cN$ and $\bar v_1:D\to \cN$ are two area minimizing minimal disks (conformal and harmonic maps), where $\bar v_i\vert_{\partial D}: \partial D \to \Gamma$ is a monotone parametrization, $i=1,2$. The total variational space for the fixed boundary problem will be
\[ \Omega_f =\left\{\sigma: D\times[0,1]\to \cN, \text{ such that: } \begin{array}{l} \sigma: [0, 1] \to C^0(\overline{D}, \cN) \cap W^{1,2}(D, \cN) \text{ is continuous, }\\
                             \text{and } \sigma(\cdot, t)(\partial D)\subset \Gamma,\\                             \text{and } \sigma(\cdot, 0)=\bar v_0, \sigma(\cdot, 1)=\bar v_1.            
                    \end{array}\right\}. \]
Given a map $\beta \in\Omega_f$, we define $\Omega_\beta$ to be the homotopy class of $\beta$ in $\Omega_f$. 
The width $W$ associated with $\Omega_\beta$ can be defined in the same way, namely
\begin{equation}
    W=W(\Om_\beta):=\inf_{\gamma\in\Omega_\beta} \max_{s\in[0, 1]}\area(\gamma(\cdot, s)).
\end{equation}
Then $W\geq \max(\area(\bar v_0), \area(\bar v_1))>0$. The next result is a slight variant of Theorem \ref{T:main1}.

\begin{thm}
\label{T:main2}
Given $\beta\in\Omega_f$ with $W = W(\Om_\beta)>\max(\area(\bar v_0), \area(\bar v_1))$, there is a sequence of sweepouts $\gamma^j\in\Omega_\beta$ with $\max_{s\in[0,1]}E(\gamma^j(\cdot, s))\to W$, such that for any given $\epsilon>0$, there exist $\bar j$ and $\delta>0$ so that if $j>\bar j$ and 
\[\area(\gamma^j(\cdot,s))>W-\delta,\]
then there exist finitely many harmonic disks $u_k: D\to \cN$ with $u_k(\partial D)\subset \Gamma$ and finitely many (possibly empty) harmonic spheres $\hat{u}_l: \mathbf{S}^2\to \cN$ with
\[d_V(\gamma^j(\cdot,s),\cup_k\{u_k\}\cup_{l}\{\hat{u}_l\}) \to 0, \text{ as } j\to \infty. \]
Moreover, when restricted as maps from $\partial D$ to $\Gamma$, only one map among $u_k$'s has degree $1$, whereas all others have degree $0$. We also have the energy identity $\sum_k \area (u_k)+ \sum_l \area(\hat{u}_l)=W$.
\end{thm}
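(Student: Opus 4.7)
The plan is to adapt the proof of Theorem \ref{T:main1} to the fixed boundary setting; the only substantive new ingredient is the boundary degree statement, which I expect to be the hardest part.

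For the variational construction, the assumption $W>\max(\area(\bar v_0),\area(\bar v_1))$ plays the same role as $W>0$ in Theorem \ref{T:main1}: it guarantees that for sufficiently large $j$, any slice $\gamma^j(\cdot,s)$ with area close to $W$ must have parameter $s$ in a compact subinterval $[\delta,1-\delta]\subset(0,1)$. Consequently, the harmonic-replacement modifications used to pull a sweepout tight are supported away from $\{0,1\}$ and do not disturb the fixed endpoints $\bar v_0$ and $\bar v_1$, so the modified sweepouts remain in $\Omega_\beta\subset\Omega_f$. With this observation in place, the pull-tight procedure, the interior energy convexity of Colding--Minicozzi, the boundary energy convexity (Theorem \ref{T:convexity2}), and the resulting concentration-compactness bubble tree analysis yielding the energy identity all carry over verbatim.

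For the boundary degree statement, first observe that since every $\sigma\in\Omega_f$ depends continuously on $s$ in $C^0(\overline{D},\cN)$ and maps $\partial D$ into $\Gamma$, the family $s\mapsto\sigma(\cdot,s)\vert_{\partial D}\in C^0(\partial D,\Gamma)$ is continuous, so the Brouwer degree $\deg(\sigma(\cdot,s)\vert_{\partial D})$ is constant in $s$. Since both $\bar v_i\vert_{\partial D}$ ($i=0,1$) are monotone parametrizations of the Jordan curve $\Gamma$, a consistent choice of orientation gives them common degree $+1$, and therefore $\deg(\gamma^j(\cdot,s)\vert_{\partial D})=1$ for every $j$ and $s$.

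The hardest step is to transfer this degree through the bubble tree limit. Index the harmonic disks in the limit so that $u_0$ is the main weak limit and $u_k$ ($k\geq 1$) are the boundary bubbles at concentration points $p_k\in\partial D$. Decompose $\partial D=A_0^j\sqcup A_1^j\sqcup\cdots\sqcup A_m^j$, with $A_k^j$ a vanishing arc around $p_k$ for $k\geq 1$ and $A_0^j$ the complement. On $A_0^j$ the $\epsilon$-regularity of Theorem \ref{e-reg} gives $C^0$-convergence $\gamma^j(\cdot,s_j)\vert_{A_0^j}\to u_0\vert_{A_0^j}$, while on each $A_k^j$ the rescaled boundary converges to $u_k\vert_{\partial D}$. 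A standard no-neck analysis, combined with the boundary energy convexity, shows that the image in $\Gamma$ of each annular neck between the main scale and the bubble scale at $p_k$ collapses to the single point $u_0(p_k)$, so $\gamma^j(\cdot,s_j)\vert_{\partial D}$ is homotopic in $C^0(\partial D,\Gamma)$ to the concatenation, based at each $u_0(p_k)$, of $u_0\vert_{\partial D}$ with the loops $u_k\vert_{\partial D}$. By additivity of degree,
\[
1=\deg(u_0\vert_{\partial D})+\sum_{k\geq 1}\deg(u_k\vert_{\partial D}).
\]
Each boundary bubble loop $u_k\vert_{\partial D}$ ($k\geq 1$) then has image confined, after the no-neck reduction, to a small neighborhood of $u_0(p_k)$ in $\Gamma$ whose diameter vanishes with the arc length of $A_k^j$; hence the loop is null-homotopic in $\Gamma$ and has degree $0$. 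This forces $\deg(u_0\vert_{\partial D})=1$ and yields the claimed dichotomy among the $u_k$'s.
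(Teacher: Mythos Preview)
Your outline of the variational construction and tightening is the paper's approach (replace the threshold $W/2$ in Theorem \ref{T:Tightening theorem} by $W/\lambda$ with $W/\lambda>\max(\area(\bar v_0),\area(\bar v_1))$, so the replacements avoid $s=0,1$); one point you skip is that the mollification in Theorem \ref{T:area width equals energy width} must also preserve the endpoints, which the paper handles by reparametrizing to $[\mu,1-\mu]$ and reconnecting to the smooth conformal maps $\bar v_0,\bar v_1$. Your degree-additivity step via the no-neck property is also the paper's argument (phrased there as a Courant--Lebesgue application on the collapsing half-annulus $A_j$, whose energy tends to zero by the energy identity), and it correctly yields $\sum_k\deg(u_k\vert_{\partial D})=1$.

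The final step, however, is wrong. You claim each bubble loop $u_k\vert_{\partial D}$ ($k\geq 1$) has image confined to a shrinking neighborhood of $u_0(p_k)$ in $\Gamma$ because the domain arc $A_k^j\subset\partial D$ is short. This confuses smallness in the domain with smallness in the target: $A_k^j$ is precisely where $|\nabla\gamma^j|$ concentrates, so $\gamma^j(A_k^j)\subset\Gamma$ can cover all of $\Gamma$, possibly many times; the bubble $u_k$ is the rescaled limit on this region and $u_k(\partial D)$ is in no sense small. The no-neck property bounds only the image of the \emph{neck} between the two scales, not the image of the bubble's boundary. Hence your argument gives only the additivity $\sum_k\deg(u_k\vert_{\partial D})=1$, not $\deg(u_k\vert_{\partial D})=0$ for $k\geq 1$. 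The paper's Courant--Lebesgue argument likewise establishes additivity at each blow-up stage and then asserts the $\{1,0,\ldots,0\}$ dichotomy by iteration; it does not attempt to bound the bubble boundary images.
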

\begin{remark*}
We postpone the discussions of this result until Section \ref{S:Necessary modifications for the proof of fixed boundary min-max}.
\end{remark*}

\vspace{1em}
We also want to mention the min-max theory for constructing minimal submanifolds with free boundary using geometric measure theory. In 1960s, Almgren \cite{Almgren62} initiated a program to develop a Morse theory for minimal submanifolds (with or without free boundary), and he obtained the existence of an integral varifold which is stationary with free boundary in the sense of first variation in any dimensions and co-dimensions in \cite{Almgren65}; (see more details in \cite{LZ16}). Along this direction higher regularity was established for hypersurfaces later. In particular, Gr\"{u}ter and Jost \cite{Gruter-Jost86a} proved the existence of an unstable embedded minimal disk inside any compact \emph{convex} domain in $\R^3$. Later, Jost in \cite[Theorem 4.1]{Jost86} generalized their work to any compact three-manifold which is diffeomorphic to a ball with mean convex boundary. Higher dimensional results were developed very recently by De Lellis-Ramic \cite{Delellis-Ramic16} (for both free and fixed boundary problems in convex manifolds; see also \cite{Montezuma18}), and by Li and the last author \cite{LZ16} (for the free boundary problem in any compact manifolds with boundary). We refer to \cite{MN-survey} for other recent developments of the min-max theory using geometric measure theory.

\subsection*{Sketch of main ideas}  Here we provide a brief summary of our main new ideas. Though the main scheme follows the approach laid out by Colding-Minicozzi \cite{CM08} for minimal spheres (see also \cite{Zhou10, Zhou17} for minimal surfaces with higher genus), the presence of free boundary in our setting brings in several main new obstacles. 
\vskip 1.5mm

In the analytic aspect, there are two main ingredients that we have to establish for weakly harmonic maps with (partial) free boundary. The first ingredient is a version of energy convexity which says that the energy functional is strictly convex near a weakly harmonic map with mixed Dirichlet and free boundaries on the half 2-disk. Unlike the proof in \cite{CM08} where Colding-Minicozzi used the moving frame method developed by H\'elein \cite{He02} in order to get a Hardy type estimate for weakly harmonic maps with Dirichlet boundary and small energy on the 2-disk $D$, we first use {U}hlenbeck-{R}ivi\`ere decomposition method developed by {R}ivi\`ere \cite{Ri07} to get a refined $\epsilon$-regularity (gradient estimate) for weakly harmonic maps with mixed Dirichlet and free boundaries on the half 2-disk (Theorem \ref{e-reg}), and then appeal to the first order Hardy inequality (Lemma \ref{hardyinequ}), c.f. Lamm and the first author \cite{LL13} where the energy density $|\nabla u|^2$ is estimated in the local Hardy space $h^1(D)$. The key geometric observation in the proof of the energy convexity lies in two orthogonality conditions, one observed by Colding-Minicozzi \cite{CM08}, and the other by the third author \cite{Zhou16}. Very recently, this idea also permits the first author and Laurain to obtain an energy convexity and uniqueness for \textit{weakly intrinsic bi-harmonic maps} defined on the unit $4$-ball with small bi-energy, which in particular yields a version of uniqueness for weakly harmonic maps in dimension $4$, see \cite{LauL18}. The other ingredient is the uniform continuity estimate up to the free boundary for weakly harmonic maps. Whereas the continuity up to the Dirichlet boundary was proven by Qing \cite{Qing93}, we prove the uniform continuity for weakly harmonic maps with mixed Dirichlet and free boundaries by a careful covering argument using our previous gradient estimate. We think that the results and techniques in both of the two ingredients have independent interests for the free boundary problem of harmonic maps.
\vskip 1.5mm

The variational construction mainly consists of two parts. The first part is a two-stage tightening process following closely the approach of Colding-Minicozzi \cite{CM08} (while this can be made as a $2^k$-stage process when the parameter space is $k$-dimensional). The main challenges here include the $W^{1,2}\cap C^0$-continuity for (partial) free boundary harmonic replacements, and an interpolation construction to prove energy improvement inequalities. In fact, the key idea in both places is to construct comparison maps and then use the energy minimality conditions. One natural candidate of comparison maps is the linear interpolation in $\mb R^N $of two maps $u_i: D\to \mc N$, $i=1, 2$. Though such map will go outside of $\mc N$, in \cite{CM08} Colding-Minicozzi used the nearest point projection to pull it back to $\mc N$. However the projection does not necessarily map the  image of $\partial D$ into $\Gamma$. Therefore we have to develop new methods. To prove the $W^{1, 2}$-continuity, we find a way to reduce the problem to an interpolation between two curves; and in the second place, we choose to do interpolation in Fermi coordinates. Based on the two new results, the tightening process then can be carried through in the free boundary setting analogously as in \cite{CM08}.
\vskip 1.5mm

The second part is a bubbling convergence procedure for almost harmonic maps with free boundary. The bubbling convergence for almost harmonic maps on spheres was developed by Colding-Minicozzi \cite[Appendix B]{CM08}, and for free boundary $\alpha$-harmonic maps it has been investigated systematically by Fraser \cite{Fraser00}. Our result can be viewed as a combination of the two results. Among several things, the most novel observation in this part is the asymptotic analysis for harmonic maps with free boundary defined on a long half cylinder (which is conformally equivalent to a thin half annulus). In particular, we prove that the angular energy is much smaller than the total energy of this map. Since boundary terms will appear in the integration by parts argument, we have to use our gradient estimate together with a delicate doubling argument to take care of the boundary terms, so as to carry out Colding-Minicozzi's method (which works on cylinders). 

\subsection*{Layout of the paper} 
The paper is organized as follows. In Section \ref{S:Notations}, we fix some notations. In Section \ref{S:Energy convexity for free boundary weakly harmonic maps}, we prove the energy convexity for weakly harmonic maps defined on the half disk with mixed Dirichlet and free boundaries. In Section \ref{S:Existence and regularity of free boundary harmonic replacement}, we prove that a weakly harmonic map with mixed Dirichlet and free boundaries on the half disk with continuous partial Dirichlet boundary is also continuous on the whole free boundary including the corner points; and we also present the proof on the existence of a weakly harmonic map with free boundary and with prescribed partial Dirichlet boundary (that we call \textit{partial free boundary harmonic replacement}). In Section \ref{S:Overview of the variational approach}, we outline the main results needed to establish the min-max theory in the free boundary setting. In Section \ref{S:Conformal parametrization}, we prove that the min-max values for the area and energy functionals are the same by using conformal reparametrizations. In Section \ref{S:Construction of the tightening process}, we carry out the construction of the tightening process; in particular, we show how to use a two-stage harmonic replacement procedure to make the sweepout as tight as possible. In Section \ref{S:Compactness of maximal slices}, we prove that any min-max sequence of maps will converge to a bubble tree consisting of harmonic disks and harmonic spheres; here we show that the bubbling convergence satisfies the energy identity, or equivalently, the total energy of the bubble tree is the same as the min-max value. In Section \ref{S:Necessary modifications for the proof of fixed boundary min-max}, we point out necessary changes needed to adapt our theory to the fixed boundary min-max problem (Theorem \ref{T:main2}).

\vspace{1em}
{\bf Acknowledgements}: The authors would like to thank Professor William Minicozzi for encouragements and helpful comments, and also Professor Rugang Ye for helpful discussions.

\vspace{1em}
{\bf Added in proof}: We were informed that Laurain and Petrides have recently proved a similar result, and Theorem \ref{T:convexity2} was announced in a seminar proceedings paper \cite{Lau17} but without a proof.

\section{Notations}
\label{S:Notations}
We first fix some notations. 
\begin{itemize}
\item $\mb R^2$ denotes the Euclidean two plane where $(x, y)$ (or $(x_1, x_2)$), $(r, \theta)$ are the Cartesian and polar coordinates respectively.
\item $\mb H^2=\{(x, y)\in\mb R^2: y> 0\}$ denotes the upper half plane.
\item $D_r = D_r(0) = \{ (x,y)\in \mb{R}^2: x^2+y^2 < r^2\}$ denotes the disk of radius $r$ centered at origin.
\item $D^+_s$ denotes the upper half disk with radius $s$, i.e. $D^+_s=D_s\cap \mc H^2=\{(r,\theta):0\leq r < s, 0< \theta < \pi\}$.
\item For simplicity, we sometimes write $D^+ = D^+_1$. 
\end{itemize}
Then we denote $\partial D_s^+=\partial^C_s \cup \partial^A_s$, which is the union of the chord (diameter) and the arc of the upper-semi circle, i.e.
\[ \partial^C_s=\{(r,\theta): 0\leq r \leq s,\,\, \theta=0\,\, \text{or}\,\, \pi\} \quad \text{and}\quad \partial^A_s=\{(r,\theta): r\equiv s, \,\,0\leq \theta \leq \pi\}\,. \]
 Similarly, we will denote 
\[ \partial^C = \partial^C_1 \quad \text{and} \quad  \partial^A = \partial^A_1\,.\]

We write $u: (D, \partial D)  \to (\mathcal{N}, \Gamma)$ if $u$ is a map from $D$ to $\mathcal{N}$ and $u(\partial D)\subset \Gamma$. Similarly, we write $u: (D^+_s, \partial_s^C)  \to (\mathcal{N}, \Gamma)$ if $u$ is a map from $D^+_s$ to $\mathcal{N}$ and $u(\partial_s^C)\subset \Gamma$.
\vskip 2mm

Given $u: D\to \mathcal{N}$ or $u: D^+_s\to \mathcal{N}$, the Dirichlet energy is defined as
\[ E(u):= \int_{D \text{ or } D_s^+} |\nabla u|^2\,dxdy\,. \]

The Euler-Lagrange equation for the Dirichlet energy is the so called {\em harmonic map equation}, which is a quasi-linear system defined by (see e.g. \cite{He02}): 
\begin{equation}
\label{E:harmonic map equation}
-\Delta u = A(u)(\nabla u, \nabla u),
\end{equation}
where $A(u)$ is the second fundamental form of the embedding $\mathcal{N}\hookrightarrow \mathbf{R}^N$.

\begin{defi}
A $W^{1, 2}$-map $u: (D, \partial D)  \to (\mathcal{N}, \Gamma)$ is called a {\em weakly harmonic map with free boundary} if $u$ satisfies the harmonic map equation weakly in $D$ and
\[ \frac{\partial u}{\partial r}\perp \Gamma \text{ along } \partial D.\]

Similarly a $W^{1, 2}$-map  $u: (D^+_s, \partial_s^C)  \to (\mathcal{N}, \Gamma)$ is called a {\em weakly harmonic map with partial free boundary} if $u$ satisfies the harmonic map equation weakly in  $D^+_s$ and 
\[ \frac{\partial u}{\partial y}\perp \Gamma \text{ along } \partial_s^C. \]
\end{defi}

\section{Energy convexity for weakly harmonic maps with partial free boundary}
\label{S:Energy convexity for free boundary weakly harmonic maps}

In this section, we present the first main result, that is, the energy convexity and uniqueness for weakly harmonic maps with mixed Dirichlet and free boundaries. This is not only one of the key ingredients of our min-max existence theory for minimal disks with free boundary, but it also has its own interest from the point of view of PDE's and calculus of variations.
\vskip 1.5mm

We first recall Colding-Minicozzi's energy convexity for weakly harmonic maps defined on the $2$-disk $D$ with Dirichlet boundary and small energy, see also \cite{LL13}.
\begin{thm}(\cite[Theorem 3.1]{CM08})
\label{T:convexity1}
There exists a constant $\varepsilon_0>0$ depending only on $\cN$ such that if $u, v \in W^{1,2}(D, \cN)$, $u = v$ on $\partial D$, and $u$ is weakly harmonic with $E(u) \leq \varepsilon_0$, then
\begin{equation}
\frac{1}{2}\int_{D}\vert\nabla v-\nabla u\vert^2\,dx \leq \int_{D}\vert\nabla v\vert^2\,dx-\int_{D}\vert \nabla u\vert^2\,dx\,.
\end{equation}
\end{thm}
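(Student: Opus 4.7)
The plan is to set $w := v - u \in W^{1,2}_0(D,\mathbf{R}^N)$, viewing both maps as $\mathbf{R}^N$-valued via the embedding $\mathcal{N}\hookrightarrow\mathbf{R}^N$. Expanding $|\nabla v|^2 = |\nabla u|^2 + 2\langle\nabla u,\nabla w\rangle + |\nabla w|^2$, the desired inequality is equivalent to the cross-term bound
\begin{equation*}
\int_D \langle\nabla u,\nabla w\rangle \,\geq\, -\tfrac{1}{4}\int_D |\nabla w|^2.
\end{equation*}
Heuristically, if $w$ were everywhere tangent to $\mathcal{N}$ along $u$, the left side would vanish by the first variation of energy; the entire game is to control the normal defect of the chord $v-u$ between two points of $\mathcal{N}$.

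First I would integrate by parts, using $w|_{\partial D}=0$ together with the harmonic map equation \eqref{E:harmonic map equation}, to obtain
\begin{equation*}
\int_D \langle\nabla u,\nabla w\rangle \,=\, \int_D \langle A(u)(\nabla u,\nabla u),\,w\rangle.
\end{equation*}
Since $A(u)(X,Y)\in (T_{u(x)}\mathcal{N})^\perp$, only the normal component $w^\perp(x)$ of the chord contributes. Writing $\mathcal{N}$ locally as a graph over $T_{u(x)}\mathcal{N}$ and Taylor-expanding, the fact that both $u(x)$ and $v(x)$ lie in $\mathcal{N}$ gives $|w^\perp(x)|\leq C_{\mathcal{N}}|w(x)|^2$. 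Combined with the pointwise bound $|A(u)(\nabla u,\nabla u)|\leq C|\nabla u|^2$, this yields
\begin{equation*}
\Bigl|\int_D \langle\nabla u,\nabla w\rangle\Bigr| \,\leq\, C\int_D |\nabla u|^2\,|w|^2.
\end{equation*}

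The remaining (and main) task will be to prove the Hardy-type bound $\int_D |\nabla u|^2|w|^2 \leq C\,E(u)\int_D |\nabla w|^2$, after which choosing $\epsilon_0$ so small that $2C\epsilon_0 \leq \tfrac{1}{2}$ closes the argument. To this end I would first invoke the interior $\epsilon$-regularity for weakly harmonic maps of small energy to obtain a pointwise estimate
\begin{equation*}
|\nabla u(x)|^2 \,\leq\, \frac{C\,E(u)}{\dist(x,\partial D)^2},
\end{equation*}
by applying the standard $\epsilon$-regularity on the ball of radius $\tfrac{1}{2}\dist(x,\partial D)$ about $x$; then the Hardy inequality $\int_D |w|^2/\dist(x,\partial D)^2\,dx \leq C\int_D |\nabla w|^2\,dx$ for $w\in W_0^{1,2}(D)$ delivers the desired estimate. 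The hard part is really the underlying $\epsilon$-regularity: the harmonic map equation is critical in dimension two and standard elliptic theory gives no gain, so one must exploit the compensated-compactness/null-form structure hidden in $A(u)(\nabla u,\nabla u)$ via a gauge change --- this is where H\'elein's moving frame construction (or equivalently the Uhlenbeck--Rivi\`ere decomposition referenced elsewhere in the paper) becomes indispensable. Once this Hardy-type estimate is in hand, the rest of the argument is formal; this scheme is exactly what will need to be generalized, with the chord/normal-space observation preserved and suitable free-boundary modifications, for the partial free boundary version on the half-disk proved later.
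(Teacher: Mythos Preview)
The paper does not give its own proof of Theorem~\ref{T:convexity1}; it is simply recalled from \cite[Theorem 3.1]{CM08} as background for the free boundary analogue Theorem~\ref{T:convexity2}. That said, your proposed argument is correct, and in fact it is precisely the scheme the paper employs to prove Theorem~\ref{T:convexity2}: the chord-normal estimate $|(v-u)^\perp|\leq C|v-u|^2$ (cited there as \cite[Lemma A.1]{CM08-1}), the pointwise gradient bound $|\nabla u(x)|\leq C\sqrt{\varepsilon_0}/(1-|x|)$ from $\epsilon$-regularity (Theorem~\ref{e-reg}), and Hardy's inequality (Lemma~\ref{hardyinequ}) combine exactly as you describe.

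It is worth noting one methodological point. According to the paper's own discussion in the introduction, the original proof in \cite{CM08} obtained the requisite quadratic control via H\'elein's moving frame method and a Hardy \emph{space} estimate for $|\nabla u|^2$, rather than via a pointwise gradient estimate plus the Hardy \emph{inequality}. Your route---pointwise $\epsilon$-regularity plus Hardy inequality---is the alternative taken in \cite{LL13} and adopted in this paper because it adapts more cleanly to the mixed Dirichlet/free boundary setting. So your proposal is correct, coincides with the paper's strategy for the new result, and differs slightly in flavor from the proof in the cited source.
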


In this section, we prove a free boundary analogue of Theorem \ref{T:convexity1}. We will abuse notation and still denote the energy threshold as $\varepsilon_0$. More precisely, we prove

\begin{thm}[Energy convexity for weakly harmonic maps with mixed Dirichlet and free boundaries]
\label{T:convexity2}
There exists a constant $\varepsilon_0>0$ depending only on $\mathcal{N}$ and $\Gamma$ such that if $u, v \in W^{1,2}(D^+, \mathcal{N})$ with $u|_{\partial^A}=v|_{\partial^A}$, $u|_{\partial^C} \subset \Gamma$, $v|_{\partial^C} \subset \Gamma$, $E(u) \leq \varepsilon_0$, and $u$ is a weakly harmonic map with partial free boundary, then we have the energy convexity
\begin{equation}\label{Convexity1}
\frac{1}{2}\int_{D^+} |\nabla v-\nabla u|^2\,dx\, \leq\, \int_{D^+} |\nabla v|^2 \,dx\,-\int_{D^+}|\nabla u|^2\,dx\,\,.
\end{equation}
\end{thm}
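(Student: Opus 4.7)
The plan is to follow the algebraic scheme of Colding-Minicozzi \cite{CM08} while handling the new boundary contribution on $\partial^C$ produced by the mixed boundary conditions. Setting $w := v - u$, we have $w \equiv 0$ on $\partial^A$, and expanding $|\nabla v|^2 = |\nabla(u+w)|^2$ shows that \eqref{Convexity1} is equivalent to the single estimate
\begin{equation*}
\Big|\int_{D^+}\langle\nabla u,\nabla w\rangle\,dxdy\Big|\ \leq\ \tfrac{1}{4}\int_{D^+}|\nabla w|^2\,dxdy.
\end{equation*}
To get at this, I would test the weak harmonic map equation \eqref{E:harmonic map equation} for $u$ against $w$; since $w|_{\partial^A}=0$, only the chord $\partial^C$ survives in the boundary integration by parts, yielding
\begin{equation*}
\int_{D^+}\langle\nabla u,\nabla w\rangle\,dxdy\ =\ \int_{D^+}\big\langle A(u)(\nabla u,\nabla u),\,w\big\rangle\,dxdy\ -\ \int_{\partial^C}\langle\partial_y u,\,w\rangle\,dx.
\end{equation*}

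The key geometric input consists of two orthogonalities that convert each integrand into a quadratic perturbation in $w$. First, since $u,v\in\mathcal{N}$ pointwise, the chord $w=v-u$ in $\mathbf{R}^N$ is tangent to $\mathcal{N}$ at $u$ up to an error of order $|w|^2$; because $A(u)(\nabla u,\nabla u)$ is normal to $T_u\mathcal{N}$, this gives $|\langle A(u)(\nabla u,\nabla u),w\rangle|\leq C|\nabla u|^2|w|^2$, which is the orthogonality used in \cite{CM08}. Second, along $\partial^C$ both $u$ and $v$ take values in $\Gamma$, and the partial free boundary condition $\partial_y u\perp T_u\Gamma$, together with the smoothness of $\Gamma$, yields $|\langle\partial_y u,w\rangle|\leq C|\partial_y u||w|^2$ on $\partial^C$; this is the second orthogonality introduced by the third author in \cite{Zhou16}. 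It then remains to absorb both quadratic integrands by $\int_{D^+}|\nabla w|^2$.

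For the interior term, I would follow \cite{LL13} and establish the bilinear bound
\begin{equation*}
\int_{D^+}|\nabla u|^2|w|^2\,dxdy\ \leq\ C\,\varepsilon_0\int_{D^+}|\nabla w|^2\,dxdy,
\end{equation*}
placing $|\nabla u|^2$ into the local Hardy space using the refined $\epsilon$-regularity of Theorem \ref{e-reg} (derived via Uhlenbeck--Rivi\`ere decomposition) and pairing it with $|w|^2$ through the first-order Hardy inequality of Lemma \ref{hardyinequ}; note that here $w$ vanishes on $\partial^A$ but not on $\partial^C$, so the Hardy weight must be taken relative to $\partial^A$ only. The main obstacle is the boundary term: I would aim for an analogous bound
\begin{equation*}
\int_{\partial^C}|\partial_y u||w|^2\,dx\ \leq\ C\,\varepsilon_0\int_{D^+}|\nabla w|^2\,dxdy,
\end{equation*}
which is delicate because $w$ does not vanish on $\partial^C$ and $\partial_y u|_{\partial^C}$ is only a trace. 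My approach would be to reflect $u$ across $\partial^C$ using the free boundary condition so that Theorem \ref{e-reg} and the Hardy inequality can be applied on a slightly enlarged half-disk, and to control the trace of $|w|^2$ on $\partial^C$ by an $L^2$-trace/Hardy interpolation against $\mathrm{dist}(\cdot,\partial^A)$. Once both estimates are in place, choosing $\varepsilon_0$ sufficiently small produces the factor $\tfrac{1}{4}$ and closes the argument; uniqueness of such weakly harmonic maps follows immediately by taking $v$ to be another weakly harmonic map with the same boundary data on $\partial^A$ and using \eqref{Convexity1} in both directions.
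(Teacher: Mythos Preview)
Your overall architecture is the same as the paper's: reduce to the cross term $2\int_{D^+}\langle\nabla u,\nabla w\rangle$, integrate by parts to produce an interior piece $\int_{D^+}\langle A(u)(\nabla u,\nabla u),w\rangle$ and a boundary piece on $\partial^C$, and then use the two orthogonalities (the Colding--Minicozzi one for $\mathcal N$ and the $\Gamma$-orthogonality from \cite{Zhou16}) to replace each by a quadratic expression in $w$. That part is correct.

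Where you diverge from the paper is in how the two quadratic terms are actually absorbed. For the interior term the paper does \emph{not} put $|\nabla u|^2$ into a local Hardy space as in \cite{LL13}; instead it uses Theorem~\ref{e-reg} in its \emph{pointwise} form, $|\nabla u|(x)\le C\sqrt{\varepsilon_0}/(1-|x|)$, which immediately gives $|\nabla u|^2|w|^2\le C\varepsilon_0\,|w|^2/(1-|x|)^2$, and then Lemma~\ref{hardyinequ} (Hardy with weight relative to $\partial^A$) finishes. Your sketch conflates the Hardy-space route of \cite{LL13} with the pointwise route actually taken here; either can work, but the pointwise one is cleaner and is what the paper does.

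The genuine gap in your proposal is the boundary term. Your plan --- reflect $u$ across $\partial^C$, enlarge the half-disk, and invoke an unspecified trace/Hardy interpolation for $|w|^2|_{\partial^C}$ --- does not produce a concrete inequality, and no reflection is needed anyway since Theorem~\ref{e-reg} already controls $\nabla u$ pointwise up to $(\partial^C)^\circ$. The paper's key step is different and elementary: from $|\partial_y u|\le C\sqrt{\varepsilon_0}/(1-|x_1|)$ on $\partial^C$ one is left with $\int_{\partial^C}|w|^2/(1-|x_1|)\,dx_1$, and since $w$ vanishes on $\partial^A$ one writes, for each fixed $x_1$,
\[
-\frac{|w|^2(x_1,0)}{1-|x_1|}\ =\ \int_0^{\sqrt{1-x_1^2}}\partial_{x_2}\!\Big(\frac{|w|^2}{1-|x_1|}\Big)\,dx_2,
\]
turning the line integral on $\partial^C$ into a bulk integral over $D^+$. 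A single application of Young's inequality to $\langle w,w_{x_2}\rangle/(1-|x_1|)$ then splits this into $\tfrac14\int_{D^+}|\nabla w|^2$ plus a term dominated by $C\sqrt{\varepsilon_0}\int_{D^+}|w|^2/(1-|x|)^2$, which is again handled by Lemma~\ref{hardyinequ}. (In particular the boundary term is controlled with a factor $\sqrt{\varepsilon_0}$, not $\varepsilon_0$.) This fundamental-theorem-of-calculus trick is the missing idea in your outline.
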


An immediate corollary of Theorem \ref{T:convexity2} is the uniqueness of weakly harmonic maps with mixed Dirichlet and free boundaries on $D^+$.

\begin{cor}
\label{C:uniqueness of small energy free boundary harmonic maps}
There exists $\varepsilon_0>0$ depending only on $\mathcal{N}$ and $\Gamma$ such that the following holds: for any two weakly harmonic maps $u, v \in W^{1,2}(D^+, \mathcal{N})$ with Dirichlet boundary $u|_{\partial^A}=v|_{\partial^A}$ and free boundaries $u|_{\partial^C} \subset \Gamma, v|_{\partial^C} \subset \Gamma$, if their energies satisfy $E(u) \leq \varepsilon_0$ and $E(v) \leq \varepsilon_0$, then we have $u \equiv v$ in $D^+$.
\end{cor}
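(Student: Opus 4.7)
The plan is to derive the corollary by a symmetric double application of the energy convexity theorem (Theorem \ref{T:convexity2}). Both $u$ and $v$ qualify as ``the weakly harmonic map'' in the hypothesis of Theorem \ref{T:convexity2}, since each is weakly harmonic with partial free boundary and each has energy at most $\varepsilon_0$. Moreover, they share the Dirichlet data $u|_{\partial^A} = v|_{\partial^A}$ and both send $\partial^C$ into $\Gamma$, so either can play the role of the comparison map for the other.

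Concretely, I would first apply Theorem \ref{T:convexity2} with $u$ as the weakly harmonic map and $v$ as the competitor, obtaining
\begin{equation*}
\tfrac{1}{2}\int_{D^+}|\nabla v - \nabla u|^2\,dx \;\leq\; \int_{D^+}|\nabla v|^2\,dx - \int_{D^+}|\nabla u|^2\,dx.
\end{equation*}
Then I would swap their roles: since $E(v) \leq \varepsilon_0$ and $v$ is itself a weakly harmonic map with partial free boundary, Theorem \ref{T:convexity2} applied to $v$ with competitor $u$ gives
\begin{equation*}
\tfrac{1}{2}\int_{D^+}|\nabla u - \nabla v|^2\,dx \;\leq\; \int_{D^+}|\nabla u|^2\,dx - \int_{D^+}|\nabla v|^2\,dx.
\end{equation*}

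Adding the two inequalities, the right-hand sides cancel and I get
\begin{equation*}
\int_{D^+}|\nabla(u-v)|^2\,dx \;\leq\; 0,
\end{equation*}
so $\nabla(u-v) \equiv 0$ on $D^+$. Hence $u-v$ is a constant vector in $\mathbf{R}^N$ on the connected set $D^+$. Using the shared Dirichlet data $u|_{\partial^A} = v|_{\partial^A}$ in the trace sense, that constant must be zero, so $u \equiv v$ in $D^+$.

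There is essentially no obstacle here: the entire content is packed into Theorem \ref{T:convexity2}, and the corollary is a soft consequence once one observes the symmetry in the hypotheses. The only minor point requiring care is verifying that $v$ genuinely satisfies the hypotheses needed to serve as the ``weakly harmonic'' map in the second application (it does, by assumption of the corollary), and that the trace $u-v = 0$ on $\partial^A$ is strong enough to pin down the $\mathbf{R}^N$-valued constant — both of which are immediate.
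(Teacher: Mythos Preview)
Your proof is correct and is exactly the intended argument: the paper presents this as an ``immediate corollary'' of Theorem \ref{T:convexity2} without further proof, and the symmetric double application you describe is the natural way to unpack that.
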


In order to prove Theorem \ref{T:convexity2}, we will use the following first order Hardy's inequality and appeal to the refined $\epsilon$-regularity for weakly harmonic maps with mixed Dirichlet and free boundaries (Theorem \ref{e-reg}).

\begin{lemma}[Hardy's inequality] \label{hardyinequ} Let $u, v$ be in $W^{1,2}(D^+, \mathbf{R}^{N})$ with $u|_{\partial^A}=v|_{\partial^A}$, then we have
\begin{equation}\label{hardy}
\int_{D^+} |v-u|^2\cdot \frac{1}{(1-|x|)^2} \,dx \,\leq \, 4 \int_{D^+} |\nabla (v-u)|^2\,dx\,.
\end{equation}
\end{lemma}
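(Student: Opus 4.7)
The plan is to reduce the stated inequality on $D^+$ to a weighted one-dimensional Hardy inequality along radial rays, and then integrate in the angle via Fubini. Setting $w := v-u$, I have $w \in W^{1,2}(D^+, \mathbf{R}^N)$ with $w|_{\partial^A} = 0$ in the trace sense, and the inequality to establish becomes
\[
\int_{D^+} \frac{|w|^2}{(1-|x|)^2}\,dx \,\leq\, 4 \int_{D^+} |\nabla w|^2\,dx.
\]
By a density argument combined with Fatou's lemma on the left-hand side, it suffices to prove this for smooth $w$ vanishing in a neighborhood of $\partial^A$.

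The technical heart of the argument is the following $r$-weighted one-dimensional Hardy inequality: for any $f \in W^{1,2}((0,1))$ with $f(1) = 0$,
\[
\int_0^1 \frac{f(r)^2\, r}{(1-r)^2}\,dr \,\leq\, 4 \int_0^1 f'(r)^2\, r\, dr.
\]
I would prove this by integration by parts using the antiderivative $F(r) := \frac{1}{1-r} + \ln(1-r) - 1$ of $\frac{r}{(1-r)^2}$. Two properties of $F$ close the argument: first, $F(0) = 0$, which kills the boundary term at $r=0$; second, $(1-r)F(r) = r + (1-r)\ln(1-r)$ lies in $[0,r]$ on $[0,1)$, since its derivative $-\ln(1-r)$ is nonnegative (giving the lower bound together with the value $0$ at $r = 0$) and $(1-r)\ln(1-r) \leq 0$ (giving the upper bound). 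After IBP, Cauchy--Schwarz applied with the splitting $ff'F = \bigl(f\,\tfrac{\sqrt r}{1-r}\bigr)\bigl(f'\,\tfrac{(1-r)F}{\sqrt r}\bigr)$ together with the pointwise bound $(1-r)F \leq r$ produces exactly the constant $4$.

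With the 1D inequality in hand, I pass to polar coordinates $x = (r\cos\theta, r\sin\theta)$: for each $\theta \in [0,\pi]$, the radial slice $r \mapsto |w(r\cos\theta, r\sin\theta)|$ vanishes at $r = 1$, so applying the 1D inequality slice by slice, using $|\partial_r w|^2 \leq |\nabla w|^2$, and integrating with the polar area element $dx = r\,dr\,d\theta$ yields
\[
\int_{D^+}\frac{|w|^2}{(1-|x|)^2}\,dx = \int_0^\pi\!\int_0^1 \frac{|w|^2 r}{(1-r)^2}\,dr\,d\theta \leq 4\int_0^\pi\!\int_0^1 |\partial_r w|^2\, r\,dr\,d\theta \leq 4\int_{D^+}|\nabla w|^2\,dx.
\]

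The main obstacle is securing the \emph{sharp} constant $4$ in the weighted 1D step: the polar Jacobian $r$ forces an extra $r$-weight on both sides, and neither the classical unweighted 1D Hardy nor naive pointwise estimates like $|w(r,\theta)|^2 \leq (1-r)\int_r^1|\partial_s w|^2\,ds$ (which, after Fubini, generate a logarithmic factor) succeed with this constant. The specific antiderivative $F$ above, chosen so that $F(0) = 0$, together with the pointwise bound $(1-r)F(r)\leq r$ arising from the correction term $\ln(1-r)$, is precisely what is needed to close the argument with constant $4$ and hence recover the factor required for the later energy-convexity application.
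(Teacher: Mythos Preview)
Your proof is correct and follows the same overall architecture as the paper's: reduce to a one-dimensional weighted Hardy inequality along radial rays via polar coordinates, then integrate in $\theta$. The density/Fatou step is the same in spirit; the paper makes it explicit by reflecting $w=v-u$ evenly across $\partial^C$ to land in $W^{1,2}_0(D)$ and then approximating by $C^\infty_c(D)$.

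The only real difference is in how the one-dimensional step is executed. You take the exact antiderivative $F(r)=\frac{1}{1-r}+\ln(1-r)-1$ of $\frac{r}{(1-r)^2}$ and then need the pointwise bound $(1-r)F(r)\le r$ to close Cauchy--Schwarz with constant $4$. The paper instead writes $\frac{r}{(1-r)^2}=r\cdot\partial_r\bigl(\tfrac{1}{1-r}\bigr)$, integrates by parts against $|w|^2 r$, and obtains
\[
\int_0^1 \frac{|w|^2 r}{(1-r)^2}\,dr \;=\; -\int_0^1 \frac{|w|^2}{1-r}\,dr \;-\; 2\int_0^1 \frac{w\cdot w_r\, r}{1-r}\,dr,
\]
then simply \emph{drops} the first (favorable, nonpositive) term and applies Cauchy--Schwarz directly to the second. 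This yields $A\le 2\sqrt{A}\sqrt{B}$ and hence $A\le 4B$ without any auxiliary pointwise estimate. So the ``obstacle'' you flag---getting the sharp constant $4$ despite the polar weight $r$---has a shorter resolution than the one you found: the extra factor of $r$ can be bundled with $|w|^2$ rather than with the kernel, and the resulting lower-order term $\int \frac{|w|^2}{1-r}$ has the right sign to be discarded.
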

\begin{proof}
First we extend $u$ and $v$ across $\partial^C$ by reflection (setting $\tilde{u}(x_1, x_2) = u(-x_1, x_2)$ and $\tilde{v}(x_1, x_2) = v(-x_1, x_2)$ for $x_1\leq 0$) to get $\tilde{v}-\tilde{u} \in W^{1,2}_0(D)$. Approximate $\tilde{v}-\tilde{u}$ in $W^{1,2}$ by a sequence of smooth functions with compact support $\tilde{w}_i \in C^\infty_c(D)$ and let $w_i$ be the restriction of $\tilde{w}_i$ on $D^+$. Now for each $w_i$ we have
\begin{align}
&\int_{D^+} \frac{\vert w_i \vert ^2}{(1-|x|)^2} \,dx \,=\, \int_0^\pi \int_0^1 \frac{\vert w_i \vert^2 r}{(1-r)^2} dr d\theta\notag\\
=&\, \int_0^\pi\left [ \frac{\vert w_i \vert^2 r}{1-r}\Big\vert_0^1 - \int_0^1 \left( \frac{\vert w_i \vert^2 }{1-r} +  \frac{2w_i \cdot (w_i)_r r}{1-r} \right)dr\right]d\theta\notag\\
\leq& \,2 \left(\int_0^\pi\  \int_0^1 \frac{\vert w_i \vert^2 r}{(1-r)^2} dr d\theta \right)^{1/2}\left(\int_0^\pi\  \int_0^1 \vert(w_i)_r\vert^2 r dr d\theta \right)^{1/2}\notag\\
\leq&\, 2\left(\int_{D^+} \frac{\vert w_i \vert^2}{(1-|x|)^2} \,dx\right)^{1/2}\left(\int_{D^+} |\nabla w_i|^2\,dx \right)^{1/2}\,,
\end{align}
which yields
\begin{equation}\label{hardy2}
\int_{D^+} \frac{\vert w_i \vert^2}{(1-|x|)^2} \,dx \,\leq \, 4 \int_{D^+} |\nabla w_i|^2\,dx\,.
\end{equation}
Now \eqref{hardy} follows from \eqref{hardy2} and the Fatou's lemma.
\end{proof}

The next result is a refined $\epsilon$-regularity for weakly harmonic maps on $D^+$ with Dirichlet and free boundaries, c.f. \cite{Scheven06, LP17, JLZ16}. This $\epsilon$-regularity is crucial to the proof of Theorem \ref{T:convexity2}. We shall remark that such $\epsilon$-regularity is well-known for weakly harmonic maps defined on the 2-disk $D$ with Dirichlet boundary, see e.g. Qing \cite[Lemma 4]{Qing93}.

\begin{thm}
\label{e-reg}
There exists a constant $\varepsilon_0>0$ depending only on $\mathcal{N}$ and $\Gamma$ such that if $u \in W^{1,2}(D^+, \mathcal{N})$ is a weakly harmonic map with mixed Dirichlet boundary on $\partial^A$ and free boundary $u|_{\partial^C} \subset \Gamma$ and $E(u)\leq\varepsilon_0$, then for any $x\in D^+ \cup \left(\partial^C\right)^\circ$ we have
\begin{equation}\label{ptest}
\vert \nabla u \vert(x) \leq \frac{C\sqrt{\varepsilon_0}}{1-|x|}\,,
\end{equation}
for some constant $C>0$ that only depends on $\mathcal{N}$ and $\Gamma$.
\end{thm}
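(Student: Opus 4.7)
The strategy is to reduce the boundary $\epsilon$-regularity to the now-standard interior $\epsilon$-regularity of Rivi\`ere by extending $u$ across $\partial^C$ via a geometric reflection. The key observation is that the free boundary condition $\partial u/\partial y \perp \Gamma$ together with $u|_{\partial^C}\subset\Gamma$ is precisely the natural compatibility condition that makes the reflected map remain weakly harmonic across the seam $\partial^C$, and hence amenable to the Uhlenbeck--Rivi\`ere decomposition on the full disk.

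First I would work in Fermi coordinates around $\Gamma$ in $\mathcal{N}$: for any $p\in\Gamma$ there exist coordinates $(z,y)\in\mathbf{R}^{n-l}\times\mathbf{R}^l$ on a neighborhood of $p$ in $\mathcal{N}$ such that $\Gamma=\{y=0\}$, $g_{ab}|_\Gamma=\delta_{ab}$, and the mixed block $g_{z^\alpha y^\beta}$ vanishes on $\Gamma$. A preliminary Morrey/small-energy argument (in the spirit of Scheven \cite{Scheven06}) localizes $u$ near any point of $\partial^C$ inside a single such Fermi chart once $\varepsilon_0$ is small. In that chart the free boundary condition reads $\partial_y u^z|_{\partial^C}=0$ and $u^y|_{\partial^C}=0$, so the even/odd reflection
\[
\tilde u(x,y)=\begin{cases} u(x,y), & y\geq 0,\\ \bigl(u^z(x,-y),\,-u^y(x,-y)\bigr), & y<0,\end{cases}
\]
defines a map $\tilde u\in W^{1,2}(D,\mathbf{R}^N)$ with $E(\tilde u)=2E(u)\leq 2\varepsilon_0$. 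A direct computation with the harmonic map system in Fermi coordinates, using the vanishing of the mixed block of $g$ on $\Gamma$, shows that $\tilde u$ satisfies on all of $D$ a weak equation of Rivi\`ere type
\[
-\Delta \tilde u=\tilde\Omega\cdot\nabla\tilde u,
\]
with an antisymmetric $\tilde\Omega\in L^2(D,\mathfrak{so}(N)\otimes\Lambda^1\mathbf{R}^2)$ satisfying $\|\tilde\Omega\|_{L^2}\leq C\|\nabla\tilde u\|_{L^2}\leq C\sqrt{\varepsilon_0}$; the crucial point is that the free boundary condition makes all potential distributional contributions on $\partial^C$ vanish when the equation is tested against arbitrary $\phi\in C_c^\infty(D,\mathbf{R}^N)$.

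With $\tilde u$ a weakly harmonic map on the full unit disk in Rivi\`ere conservation form, I would invoke the Uhlenbeck gauge decomposition $\tilde\Omega=P^{-1}dP+P^{-1}\xi P$ with $P\in W^{1,2}(D,SO(N))$ to obtain the standard interior bound
\[
|\nabla\tilde u|(x)\leq\frac{C\sqrt{\varepsilon_0}}{1-|x|}\qquad \text{for all } x\in D.
\]
Restricting to $D^+\cup(\partial^C)^\circ$ gives the bound for $u$. The argument just described is local (valid on a small half-ball around a single point of $\partial^C$); to pass to the global estimate on all of $D^+\cup(\partial^C)^\circ$, I would cover $D^+$ by half-balls centered at boundary points and disks centered at interior points of radius comparable to $1-|x|$, applying the above boundary version on the former and the classical interior $\epsilon$-regularity of Rivi\`ere on the latter, then rescaling.

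The main obstacle I expect is the rigorous verification that $\tilde u$ satisfies Rivi\`ere's system across the seam $\partial^C$ in the distributional sense while preserving the antisymmetry of $\tilde\Omega$ and the $L^2$ control. A naive reflection easily destroys either the $W^{1,2}$ matching at $\{y=0\}$ or the antisymmetry of $\Omega$, so one must exploit decisively the even/odd parities of the tangential/normal components dictated by the free boundary condition, together with the specific behaviour of the Christoffel symbols of $\mathcal{N}$ in Fermi coordinates restricted to $\Gamma$. This matching computation at $y=0$ is where the free boundary hypothesis is used in an essential way and where the regularity threshold $\varepsilon_0$ must be chosen (depending on $\mathcal{N}$ and $\Gamma$) so that $u(\partial^C)$ lies in a Fermi tubular neighborhood of uniform size.
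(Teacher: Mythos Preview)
Your overall strategy---reflect across $\partial^C$ and apply Rivi\`ere-type interior regularity on the full disk---is exactly the route the paper takes. The gap is at precisely the point you flag as the main obstacle: the reflected map $\tilde u$ does \emph{not} in general satisfy the pure Rivi\`ere system $-\Delta\tilde u=\tilde\Omega\cdot\nabla\tilde u$ with antisymmetric $\tilde\Omega$ across the seam. The Fermi involution $(z,y)\mapsto(z,-y)$ is an isometry of $\mathcal N$ only on $\Gamma$ itself, not in a tubular neighborhood; the Christoffel symbols have the even/odd parities you need only when restricted to $\{y=0\}$, so on the lower half-disk $\tilde u$ is harmonic for the pulled-back metric $R^*g\neq g$, and the parity argument you sketch cannot produce a single antisymmetric $\tilde\Omega$ on all of $D$.

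The paper resolves this by invoking Scheven's reflection \cite{Scheven06}, which yields not the Rivi\`ere form but the more general conservation law $\operatorname{div}(Q\nabla\tilde u)=\omega\cdot Q\nabla\tilde u$ on $D$, with $\omega\in L^2(D,so(n)\otimes\Lambda^1\mathbf R^2)$, $Q\in W^{1,2}\cap L^\infty(D,GL(n))$, $|\omega|\leq C|\nabla\tilde u|$ and $\|Q\|_{L^\infty}+\|Q^{-1}\|_{L^\infty}\leq C$ (this form is taken from Jost--Liu--Zhu \cite{JLZ16}); the matrix $Q$ is exactly what encodes the failure of the target reflection to be an isometry. Because of $Q$, one cannot simply quote Rivi\`ere's interior theorem; the paper instead runs the Uhlenbeck--Rivi\`ere machinery by hand for this system: Sharp's higher integrability gives $\nabla^2\tilde u\in L^1$, hence $\nabla\tilde u\in L^{2,1}$; a Hodge decomposition $Q\nabla\tilde u=\nabla A+\nabla^\perp B$ on small balls, Wente estimates for $B$, harmonic decay for the harmonic part of $A$, and iteration give Morrey decay $\rho^{-2\alpha}\int_{D_\rho}|\nabla\tilde u|^2\leq C\varepsilon_0$; then Adams' Riesz-potential embedding and a bootstrap give $|\nabla\tilde u|(0)\leq C\sqrt{\varepsilon_0}$, and scaling finishes. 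Your proposal would go through once you replace the naive Fermi reflection by Scheven's and accept the $Q$-system, but the subsequent analysis is then substantially more than ``invoke the Uhlenbeck gauge''.
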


\begin{proof} By the reflection across the free boundary $\Gamma$ constructed by Scheven in \cite{Scheven06}, $u\in W^{1,2}(D^+)$ can be extended to $\tilde{u} \in W^{1,2}(D)$ such that $\tilde{u}$ weakly solves in $D$ the system of equations:
\begin{equation}\label{eq01}
\text{div} (Q \nabla \tilde{u}) \,=\,\omega\cdot Q \nabla \tilde{u}
\end{equation}
for some $\omega=(\omega^i_j)_{1\leq i,j\leq n}\in L^2(D, so(n)\otimes\wedge^1\mathbf{R}^2)$ (i.e., $\omega_i^j = - \omega_j^i$) and $Q\in W^{1,2}\cap L^\infty (D, GL(n,\mathbf{R}))$ such that 
\begin{equation}\label{eq11}
\vert \omega \vert \leq C |\nabla \tilde{u}|  \quad \text{a.e. in } D\quad \text{and}\quad  \Vert Q \Vert_{L^\infty(D)} +  \Vert Q^{-1} \Vert_{L^\infty(D)} \leq C\,,
\end{equation}
where $C>0$ is a constant depending only on $\mathcal{N}$ and $\Gamma$, see Jost-Liu-Zhu \cite[Proposition 3.3]{JLZ16}. Moreover by the assumption $E(u)\leq\varepsilon_0$ and the reflection construction we have
\begin{equation}\label{omega02}
\Vert \nabla \tilde{u} \Vert_{L^2(D)}\,\leq\, C\sqrt{\varepsilon_0} \quad \text{and}\quad \Vert \omega \Vert_{L^2(D)}\,\leq\, C\sqrt{\varepsilon_0}\,.
\end{equation}
\vskip 1.5mm

Then using the {U}hlenbeck-{R}ivi\`ere decomposition method developed by Rivi\`ere \cite{Ri07} and Rivi\`ere-Struwe \cite{RS08} in the study of regularity of elliptic PDEs with antisymmetric structure, we can obtain a constant $\alpha>0$ such that (cf. \cite[page 50]{Ri12})
\begin{equation}\label{morreyest01}
\sup_{0<\rho<\frac{1}{4}, \,p\in D_{\frac{1}{2}}} \rho^{-\alpha}\int_{D_\rho(p)} \vert \Delta \tilde{u} \vert \,dx\,\leq \, C \sqrt{\varepsilon_0}\,.
\end{equation}

\begin{proof}[Proof of (\ref{morreyest01})]:
To see this, we first note that since $\tilde{u} - \overline{\tilde{u}}$ also satisfies \eqref{eq01} with the same $Q$ and $\omega$, where $\overline{\tilde{u}} = \frac{1}{\pi} \int_{D} \tilde{u} dx$ is the average of $\tilde{u}$ on $D$, without loss of generality we may assume that $\overline{\tilde{u}} =0$. Then by the work of Sharp \cite{Sh14} (see e.g. \cite[Corollary 1.4, Proposition 3.1]{Sh14}) on the higher integrability for solutions to a system of PDEs similar to \eqref{eq01}, we know that there exists a constant $C>0$ such that
\begin{equation}\label{higherint01}
\Vert \nabla^2 \tilde{u} \Vert_{L^1(D_{\frac{1}{2}})} \leq C\Vert \tilde{u}\Vert_{L^1(D)} \leq C \Vert \nabla \tilde{u}\Vert_{L^1(D)} \leq C\sqrt{\varepsilon_0}\,,
\end{equation}
see \cite[Theorem 1.2]{SZ16} and \cite[Theorem 2.4]{JLZ16}. Here we have used the Poincar\'e-Wirtinger inequality in the second inequality. We remark that this higher integrability \eqref{higherint01} essentially follows from the stability of the local Hardy space $h^1(D)$ (see e.g. Lamm and the first author \cite[Section A.2]{LL13}) under multiplication by H\"{o}lder continuous functions, coupled with the H\"{o}lder continuity of $\tilde{u}$ in $D$ proved by Scheven \cite[Theorem 4.1]{Scheven06}. 
\vskip 1.5mm

Then by the continuous embedding of $W^{1,1}(D)$ into $L^{2,1}(D)$, where $L^{2,1}$ is the Lorentz space (see e.g. \cite{Hu66}, \cite{Po83}, \cite{Tar98}, \cite{He02}), we have
\begin{equation}
\Vert \nabla \tilde{u} \Vert_{L^{2,1}(D_{\frac{1}{2}})} \leq C \Vert \nabla \tilde{u} \Vert_{W^{1,1}(D_{\frac{1}{2}})} \leq C \Vert \nabla \tilde{u}\Vert_{L^1(D)} \leq C\sqrt{\varepsilon_0}\,.
\end{equation}

Now for any $p\in D_{\frac{1}{2}}$ and $0<r<\frac{1}{2}$, we use the Hodge decomposition (see e.g. \cite[Corollary 10.5.1]{IM01}) to find $A\in W^{1,2}(D_r(p), \mathbf{R}^n)$ and $B\in W_0^{1,2}(D_r(p), \mathbf{R}^n)$ such that
\begin{equation}\label{eq07}
Q\nabla \tilde{u} = \nabla A + \nabla^\perp B \quad \text{in } D_r(p)\,,
\end{equation}
where $\nabla^\perp := (-\partial_{x_2}, \partial_{x_1})$ and we have
$$
\Vert \nabla A\Vert_{L^2(D_r(p))} + \Vert \nabla B\Vert_{L^2(D_r(p))} \leq \tilde{C} \Vert \nabla \tilde{u}\Vert_{L^2(D_r(p))} \,.
$$
Then we have (taking divergence on both sides of the first equation in \eqref{eq07})
\begin{equation}\label{eq08}
     \Delta A\, = \,\text{div} (Q   \nabla \tilde{u} ) = \omega \cdot Q\nabla \tilde{u}\quad  \text{in }\, D_r(p)\,,
     \end{equation}
and (taking curl on both sides of the first equation in \eqref{eq07})
\begin{equation}\label{eq09}
\left\{
   \begin{aligned}
    \Delta B\,& = \,\nabla^\perp Q \cdot  \nabla \tilde{u}&& \text{in }\, D_r(p)\,, \\
    B\,&=\, 0 && \text{on }\, \partial D_r(p)\,,\\
   \end{aligned}
 \right.
 \end{equation}

Now let $A = A_1 + A_2$ on $D_r(p)$ so that $\Delta A_2 =0$ and
\begin{equation}\label{eq10}
\left\{
   \begin{aligned}
    \Delta A_1\,& = \,\text{div} (Q   \nabla \tilde{u} )  \quad  &&\text{in }\, D_r(p)\,,\\
    A_1\,&=\, 0 && \text{on }\, \partial D_r(p)\,.\\
   \end{aligned}
 \right.
 \end{equation}
Then using \cite[Theorem 3.3.3]{He02} (which implies that the standard $L^p$-theory extends to Lorentz spaces), we furthermore get
\begin{equation}
\Vert \nabla A_1\Vert_{L^{2,1}(D_r(p))} \leq C \Vert Q \nabla \tilde{u}\Vert_{L^{2,1}(D_r(p))} \leq C \Vert \nabla \tilde{u}\Vert_{L^{2,1}(D_r(p))}\,.
\end{equation}
Hence 
by \cite[Theorem 3.3.4]{He02} we conclude that
\begin{equation}
\Vert A_1\Vert_{L^\infty (D_r(p))} \leq C \left(\Vert A_1 \Vert_{L^{2,1}(D_r(p))} + \Vert \nabla A_1 \Vert_{L^{2,1}(D_r(p))}\right) \leq C \Vert \nabla A_1\Vert_{L^{2,1}(D_r(p))} \leq C\sqrt{\varepsilon_0}\,,
\end{equation}
where we used again \cite[Theorem 3.3.3]{He02}  (which ensures that Poincar\'e's inequality extends to Lorentz spaces) and the fact that $A_1 = 0 $ on $\partial D_r(p)$ in the second estimate. Thus, an integration by parts yields
\begin{align}\label{a_1est}
\Vert \nabla A_1 \Vert^2_{L^2(D_r(p))} &= -\int_{D_r(p)} A_1 \Delta A_1 \,dx \leq \Vert A_1\Vert_{L^\infty (D_r(p))} \int_{D_r(p)} \vert \omega \cdot Q\nabla \tilde{u} \vert \,dx \notag\\
&\leq C\sqrt{\varepsilon_0} \Vert \nabla \tilde{u}\Vert^2_{L^2(D_r(p))}\,,
\end{align}
where we have used \eqref{omega02} and the fact that $Q\in W^{1,2}\cap L^\infty (D, GL(n,\mathbf{R}))$. Note that again by integration by parts (using $A_1 =0$ on $\partial D_r(p)$ and $\Delta A_2 =0$) we have
\begin{equation}\label{a_2est}
\Vert \nabla A_2 \Vert^2_{L^2(D_r(p))} = \Vert \nabla A \Vert^2_{L^2(D_r(p))}  - \Vert \nabla A_1 \Vert^2_{L^2(D_r(p))}  \leq  \tilde{C} \Vert \nabla \tilde{u} \Vert^2_{L^2(D_r(p))}\,.
\end{equation}
Now since $A_2$ is harmonic on $D_r(p)$ we know that for every $q \in D_r(p)$ the function
$$
\rho \to \frac{1}{\rho^2} \int_{D_\rho (q)}\vert \nabla A_2 \vert^2\, dx
$$
is increasing, see e.g. \cite[Lemma IV.1]{Ri12}. Now let $\bar{C}>0$ be such that 
$\Vert Q^{-1}\Vert_{L^\infty(D)} \leq \bar{C}$ and
$$
\delta = \min \left\{\frac{1}{4\sqrt{\tilde{C} \bar{C}}}, \frac{1}{2}\right\}\,,
$$
where $\tilde{C}$ is from \eqref{a_2est}. Then we have
\begin{equation}\label{a_2est02}
\int_{D_{\delta r} (p)} \vert \nabla A_2 \vert^2 \,dx\,\leq \, \frac{1}{16\tilde{C} \bar{C}} \int_{D_r(p)} \vert \nabla A_2 \vert^2 \,dx\leq \frac{1}{16\bar{C}} \Vert \nabla \tilde{u} \Vert^2_{L^2(D_r(p))}\,.
\end{equation}

Now using \eqref{eq09} and by the results of Coifman-Lions-Meyer-Semmes \cite{CLMS93} or Wente's lemma \cite{We69}, we know 
\begin{equation}\label{wente02}
\Vert B \Vert_{L^\infty(D_r(p))} + \Vert \nabla B\Vert_{L^{2,1} (D_r(p))} \leq C\Vert \nabla \tilde{u}\Vert^2_{L^2(D_r(p))}\,.
\end{equation}

Therefore, combining \eqref{eq07}, \eqref{a_1est}, \eqref{a_2est02} and \eqref{wente02} we have
\begin{align}
\Vert \nabla \tilde{u}\Vert^2_{L^2(D_{\delta r} (p))} &\leq  \Vert Q^{-1}\Vert_{L^\infty(D)} \left(2 \Vert \nabla A_1 \Vert^2_{L^2(D_r (p))}+ 2 \Vert \nabla A_2 \Vert^2_{L^2(D_{\delta r} (p))} +  \Vert \nabla B \Vert^2_{L^2(D_r (p))}\right)\\
&\leq C\sqrt{\varepsilon_0} \Vert \nabla \tilde{u}\Vert^2_{L^2(D_r (p))} + \frac{1}{8} \Vert \nabla \tilde{u}\Vert^2_{L^2(D_r (p))} + C\Vert \nabla \tilde{u}\Vert^4_{L^2(D_r (p))} \,.
\end{align}
Choosing $\varepsilon_0$ sufficiently small we arrive at
\begin{equation}
\Vert \nabla \tilde{u}\Vert^2_{L^2(D_{\delta r} (p))}  \leq \frac{1}{2} \Vert \nabla \tilde{u}\Vert^2_{L^2(D_r (p))} 
\end{equation}
for any $p\in D_{\frac{1}{2}}$ and $0<r<\frac{1}{2}$. Iterating this inequality gives the existence of a constant $\alpha>0$ such that for all $p\in D_{\frac{1}{2}}$ and all $\rho<\frac{1}{4}$, there holds
\begin{equation}
 \rho^{-2\alpha}\int_{D_\rho(p)} \vert \nabla \tilde{u} \vert^2 \,dx\,\leq C\int_{D} \vert \nabla \tilde{u} \vert^2 \,dx \leq \, C \varepsilon_0\,.
\end{equation}
Now by \eqref{eq01} we have
$$
\Delta \tilde{u} \,=\, Q^{-1}\left(-\nabla Q \nabla \tilde{u} + \omega\cdot Q \nabla \tilde{u}\right)\,.
$$
Then using \eqref{eq11}, for all $p\in D_{\frac{1}{2}}$ and all $\rho<\frac{1}{4}$ we have
\begin{align}
 \rho^{-\alpha}\int_{D_\rho(p)} \vert \Delta \tilde{u} \vert\,dx\,& \leq  C\rho^{-\alpha}\int_{D_\rho(p)} \vert \nabla Q\vert \vert \nabla \tilde{u} \vert + \vert \nabla \tilde{u} \vert^2 \,dx\notag\\
&\leq  C\rho^{-\alpha} \left[\left(\int_{D_\rho(p)} \vert \nabla Q \vert^2\,dx \right)^{\frac{1}{2}}  \left(\int_{D_\rho(p)} \vert \nabla \tilde{u} \vert^2\,dx \right)^{\frac{1}{2}} + \int_{D_\rho(p)} \vert \nabla \tilde{u} \vert^2\,dx \right]\\
&\leq C\sqrt{\varepsilon_0}\,.\notag
\end{align}
Here we have used $\Vert \nabla Q \Vert_{L^2(D)} \leq C$ for some $C>0$ depending only on $\mathcal{N}$ and $\Gamma$. This proves \eqref{morreyest01}.
\end{proof}

Now using \eqref{morreyest01}, a classical estimate on Riesz potentials then gives, for all $p\in D_{\frac{1}{4}}$ we have
\begin{equation}
 |\nabla \tilde{u}|(p)\le C \frac{1}{|x-p|}\ast \chi_{D_{\frac{1}{2}}}\ |\Delta \tilde{u}|+C \Vert \nabla \tilde{u}\Vert_{L^2(D)},
\end{equation}
where $\chi_{D_{\frac{1}{2}}}$ is the characteristic function of the ball $D_{\frac{1}{2}}$. Together with injections proved by Adams in \cite{Ad}, see also exercise 6.1.6 of \cite{Gra2}, the latter shows that
$$
 \Vert \nabla \tilde{u} \Vert_{L^{r}\left(D_{\frac{1}{4}}\right)} \leq C \sqrt{\varepsilon_0}
$$
for some $r>2$. Re-injecting this into the equation \eqref{eq01} and bootstrapping the estimates, we get 
$$\Vert \nabla \tilde{u} \Vert_{L^{\infty}\left(D_{\frac{1}{8}}\right)} \leq C \sqrt{\varepsilon_0}\,.$$
In particular, $\vert \nabla \tilde{u}\vert(0)\leq C \sqrt{\varepsilon_0}$. Then by a scaling argument we get the desired estimate \eqref{ptest}.
\end{proof}

Now we are ready to prove Theorem \ref{T:convexity2}.

\begin{proof}(of Theorem \ref{T:convexity2})
 In order to prove the energy convexity \eqref{Convexity1}, it suffices to show
\begin{equation}\label{TEMP1}
\Psi\,\geq\, -\frac{1}{2}\int_{D^+} |\nabla (v- u)|^2\,dx\,\,,
\end{equation}
where (using the boundary conditions and the harmonic map equation)
\begin{align}
 \Psi := &\int_{D^+} |\nabla v|^2 \,dx\,-\int_{D^+}|\nabla u|^2\,dx\,-\int_{D^+} |\nabla (v- u)|^2\,dx\, \notag\\
 = &\, 2\int_{D^+} \langle \nabla (v - u) ,\nabla u\rangle\,dx\,   \notag\\
 = &\,2\int_{D^+} \langle v - u ,  \,A(u)(\nabla u, \nabla u)\rangle\,dx\, + 2 \int_{\partial^C} \left\langle v - u , \,\frac{\partial u}{\partial \nu}\right\rangle\,ds\,\,.\label{Convexity2000}
\end{align}
Here $\nu = (0,-1)$ is the outward unit normal to $\partial^C$. Now we note that for any $p, q \in \mathcal{N}$ (or $\Gamma$ respectively), there exists a constant $C>0$, depending only on $\mathcal{N}$ (or $\Gamma$ respectively), such that $\left|(p-q)^{\perp}\right|\leq C |p-q|^2$, where the superscript $\perp$ denotes the normal component of a vector (see e.g. \cite[Lemma A.1]{CM08-1}). Therefore, using 
$$-\Delta u = A(u)(\nabla u, \nabla u) \perp T_u \mathcal{N}\,,$$ 
the Cauchy-Schwarz inequality together with \eqref{Convexity2000} yield
\begin{equation}\label{esti1}
\Psi \geq   - C_1\int_{D^+} |v - u|^2|\nabla u|^2\,dx\, - C_2 \int_{\partial^C} |v - u|^2 \left\vert\frac{\partial u}{\partial \nu}\right\vert\,ds\,,
\end{equation}
where $C_1>0$ depends only on $\mathcal{N}$ and $C_2>0$ depends only on $\Gamma$.  Now by Lemma \ref{hardy}, Theorem \ref{e-reg} and using the facts that $v=u$ on $\partial^A$ and $x_2=0$ on $\partial^C = [-1,1]\times \{0\}$, we have
\begin{align*}
\Psi &\geq   - C_3 \varepsilon_0\int_{D^+} \frac{|v - u|^2}{(1-|x|)^2}\,dx\, - C_4\sqrt{\varepsilon_0} \int_{\partial^C} \frac{|v - u|^2}{1-|x_1|}\,ds\\
&\geq  - 4C_3 \varepsilon_0\int_{D^+} |\nabla (v - u)|^2\,dx\, + C_4\sqrt{\varepsilon_0} \int_{-1}^1 \int_0^{\sqrt{1-x_1^2}} \partial_{x_2}\left(\frac{|v - u|^2}{1-|x_1|}\right)\,dx_2 dx_1\\
&=- 4C_3 \varepsilon_0\int_{D^+} |\nabla (v - u)|^2\,dx\, + C_4\sqrt{\varepsilon_0} \int_{D^+}\frac{ \langle v - u,  (v-u)_{x_2}\rangle}{1-|x_1|}\,dx\\
&\geq- 4C_3 \varepsilon_0\int_{D^+} |\nabla (v - u)|^2\,dx\, -\frac{1}{4}\int_{D^+} |\nabla (v - u)|^2\,dx - C_5\sqrt{\varepsilon_0} \int_{D^+} \frac{|v - u|^2}{(1-|x|)^2}\,dx\\
&\geq- C_6 \sqrt{\varepsilon_0}\int_{D^+} |\nabla (v - u)|^2\,dx\, -\frac{1}{4}\int_{D^+} |\nabla (v - u)|^2\,dx\,.
\end{align*}
In the second to the last inequality we have used Young's inequality. Choosing $\varepsilon_0$ sufficiently small so that $C_6 \sqrt{\varepsilon_0} \leq \frac{1}{4}$ we get the desired estimate \eqref{TEMP1}.
\end{proof}

\section{Existence and regularity of partial free boundary harmonic replacement}
\label{S:Existence and regularity of free boundary harmonic replacement}

In this section, we discuss the existence and regularity of the (partial) free boundary version of the harmonic replacement. The fixed boundary harmonic replacement was discussed in Colding-Minicozzi's work \cite{CM08}.

\subsection{Continuity of weakly harmonic maps with partial free boundary}
\label{SS:continuity of weakly free boundary harmonic maps}
In this section, we prove the $C^0$ regularity up to the boundary for weakly harmonic maps with mixed Dirichlet and free boundaries and small energy. For weakly harmonic maps with fixed continuous Dirichlet boundary, this $C^0$ boundary regularity was proved by Qing \cite{Qing93}. 
\vskip 1.5mm

Let $u: (D^+, \partial^C)\to (\mc N, \Gamma)$ be a weakly harmonic map with mixed Dirichlet and free  boundaries and small energy $E(u)\leq \varepsilon_0$, where $\varepsilon_0$ is from Theorem \ref{e-reg}, such that the Dirichlet boundary $u: \partial^A\to \mc N$ is \textit{continuous}. Then by Helein's interior regularity (see e.g. \cite{He02}), $u$ is smooth in $D^+$; by Qing's (Dirichlet) boundary regularity \cite{Qing93}, $u$ is continuous up to the interior of the Dirichlet boundary on $\partial ^A$; moreover, by the (free) boundary regularity result of Scheven \cite{Scheven06}, $u$ is also smooth up to the interior of the free boundary on $\partial^C$. So the only thing left to be verified is that $u$ is continuous up to the two corner points $(1,0)$ and $(1,\pi)$ (in polar coordinate of $\overline{D^+}$), i.e. the end points of the Dirichlet boundary on $\partial^A$ or the free boundary on $\partial^C$.\vskip 1.5mm

It suffices to prove the continuity around $p=(1,\pi)${\footnote{Given Scheven's reflection construction and equation \eqref{eq01}, one may also use {U}hlenbeck-{R}ivi\`ere decomposition method as in M\"uller-Schikorra \cite{MuSc09} to prove boundary regularity.}. We will first prove that there exists a sequence of points $\{x_i\}$ on $\partial^C$ converging to $p$, such that the $u(x_i)$'s are all close to $u(p)$; next we show that all the intermediate points in $u([x_i, x_{i+1}])$ are also close to $u(p)$. For the convenience of later proof, we will parallelly shift $D^+$ in $\mathbf{R}^2$ such that $p=(0,0)$. Now $D^+=\{(x,y)\in \mathbf{R}^2:y > 0,(x-1)^2+y^2 <1\}$ and $\partial^A = \{(x,y)\in \mathbf{R}^2:y > 0,(x-1)^2+y^2 =1\}$, and we will use polar coordinates on $\mathbf{R}^2$.\vskip 1.5mm

We first need a variant of the Courant-Lebesgue Lemma (c.f. \cite[Lemma 2]{Qing93}) at the corner point $p$. 

\begin{lemma}\label{L:Courant-Lebesgue at Corner}
Given $0<l<\frac{1}{2}$, there exists $l'\in(l,2l)$ such that 
\begin{equation}
\int_0^{\alpha(l')}\left\vert\frac{\partial u}{\partial\theta}\right\vert^2(l',\theta)d\theta\leq\frac{1}{\log 2}E\left(u\vert_{(D_{2l}(0)\setminus D_l(0))\cap D^+}\right),
\end{equation}
where $\alpha(l')$ is the angle such that $(l',\alpha(l'))\in \partial^A$. Consequently, we have
\begin{equation}
\left\vert u(l',\theta_1)-u(l',\theta_2)\right\vert\leq\sqrt{\frac{N}{\log 2}}E\left(u\vert_{(D_{2l}(0)\setminus D_l(0))\cap D^+}\right)^{1/2}\vert\theta_1-\theta_2\vert^{1/2},
\end{equation}
where $N$ is the dimension of the ambient space of the embedding of $\mathcal{N}$ into $\mathbf{R}^N$.
\end{lemma}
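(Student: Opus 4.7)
The plan is to carry out the standard Courant--Lebesgue averaging argument in polar coordinates centered at the corner $p = (0,0)$, adapted to the annular \emph{sector}
$$A_l := (D_{2l}(0) \setminus D_l(0)) \cap D^+,$$
whose angular aperture at radius $r$ is $\alpha(r)$ (the $\theta$-value at which the circle $\partial D_r(0)$ meets $\partial^A$). The key point is that the corner at $p$ is the intersection of the free boundary on $\partial^C$ and the Dirichlet boundary on $\partial^A$, but only the Dirichlet boundary enters the angular slice through $\alpha(r)$; the argument on each slice itself sees no boundary obstruction.

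First, I would write the Dirichlet energy on $A_l$ in polar coordinates centered at $p$:
\begin{equation*}
E(u|_{A_l}) \;=\; \int_l^{2l}\!\int_0^{\alpha(r)}\Bigl(\bigl|\partial_r u\bigr|^2 + \tfrac{1}{r^2}\bigl|\partial_\theta u\bigr|^2\Bigr)\, r\, d\theta\, dr \;\geq\; \int_l^{2l} \frac{f(r)}{r}\, dr,
\end{equation*}
where $f(r) := \int_0^{\alpha(r)} |\partial_\theta u|^2(r,\theta)\, d\theta$. Changing variables via $t = \log r$ turns the right-hand side into $\int_{\log l}^{\log 2l} f(e^t)\, dt$, an integral over an interval of length $\log 2$. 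By the integral mean value argument, there must exist some $l' \in (l,2l)$ for which
$$f(l') \;\leq\; \frac{1}{\log 2}\, E(u|_{A_l}),$$
which is the first claimed inequality.

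For the second inequality, I would fix this $l'$ and view $\theta \mapsto u(l',\theta)$ as a map from $[0,\alpha(l')]$ into $\mathbf{R}^N$. For any $\theta_1, \theta_2$ in this range, applying Cauchy--Schwarz componentwise gives, for each of the $N$ coordinate functions $u^i$,
\begin{equation*}
|u^i(l',\theta_1) - u^i(l',\theta_2)|^2 \;\leq\; |\theta_1 - \theta_2|\int_{\theta_1}^{\theta_2} |\partial_\theta u^i|^2(l',\theta)\, d\theta.
\end{equation*}
Summing over $i=1,\dots,N$ (and using $|v|_{\mathbf{R}^N} \leq \sqrt{N}\max_i |v^i|$ to absorb the componentwise estimate into the ambient Euclidean norm) yields the stated Hölder-$\tfrac{1}{2}$ bound with the prefactor $\sqrt{N/\log 2}\, E(u|_{A_l})^{1/2}$.

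There is no serious obstacle here; everything is a direct transcription of the classical Courant--Lebesgue lemma, the only mild subtlety being that $A_l$ is a sector rather than a full annulus so the upper limit $\alpha(r)$ of the $\theta$-integration depends on $r$. This does not affect the averaging step since the integrand is nonnegative, and it is exactly the quantity $\alpha(l')$ that appears in the conclusion. The lemma will later be used to obtain a radius $l'$ on which $u|_{\partial D_{l'}(0) \cap D^+}$ has small oscillation, which, combined with the interior and boundary regularity cited earlier in this section, is what one needs to upgrade continuity up to the corner $p$.
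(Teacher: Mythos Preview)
Your proposal is correct and follows essentially the same route as the paper: both compute the Dirichlet energy of $u$ on the annular sector in polar coordinates, drop the $|\partial_r u|^2$ term, and use the mean value (pigeonhole) argument over $\int_l^{2l} f(r)\,\frac{dr}{r}$ to locate $l'$, then apply Cauchy--Schwarz on the arc $\theta\mapsto u(l',\theta)$ to get the H\"older-$\tfrac12$ estimate with the $\sqrt{N/\log 2}$ prefactor. The paper's write-up is slightly terser but the argument is identical.
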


\begin{proof}
The proof is virtually the same as the proof of \cite[Lemma 2]{Qing93}. 
\[E\left(u\vert_{(D_{2l}(0)\setminus D_l(0))\cap D^+}\right)\geq\int_{l}^{2l}\int_{0}^{\alpha(r)}\left(\left\vert\frac{\partial u}{\partial r}\right\vert^2+\frac{1}{r^2}\left\vert\frac{\partial u}{\partial\theta}\right\vert^2\right)rdrd\theta,\]
where $\alpha(r)$ is the angle such that $(r,\alpha(r)) \in \partial^A = \{(x,y)\in \mathbf{R}^2:y > 0,(x-1)^2+y^2 =1\}$. Therefore there exists $l'\in(l,2l)$ such that
\[\int_0^{\alpha(l')}\left\vert\frac{\partial u}{\partial\theta}\right\vert^2(l',\theta)d\theta\leq\frac{1}{\log2}E\left(u\vert_{(D_{2l}(0)\setminus D_l(0))\cap D^+}\right).\]
Then we have
\[\vert u(l',\theta_1)-u(l',\theta_2)\vert\leq \left \vert\int_{\theta_1}^{\theta_2}\frac{\partial u}{\partial \theta}(l',\theta)d\theta \right\vert\leq\sqrt{\frac{N}{\log 2}}E\left(u\vert_{(D_{2l}(0)\setminus D_l(0))\cap D^+}\right)^{1/2}\vert\theta_1-\theta_2\vert^{1/2}.\qedhere\]
\end{proof}

Now we are ready to prove the continuity up to the corner on the boundary of $D^+$.

\begin{thm}
\label{T: Continuity of free boundary harmonic map}
There exists $\varepsilon_0>0$ such that if $u: (D^+, \partial^C)\to (\mc N, \Gamma)$ is a weakly harmonic map with mixed Dirichlet and free boundaries and $E(u)\leq\eps_0$, and $u$ is continuous on $\partial^A$, then $u$ is continuous on $\overline{D^+}$.
\end{thm}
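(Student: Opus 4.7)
\textit{Reduction and a sequence of good arcs.} By H\'elein's interior regularity, Qing's continuity up to the Dirichlet boundary $\partial^A$, and Scheven's free-boundary regularity on the interior of $\partial^C$, $u$ is already continuous on $\overline{D^+}$ away from the two corner points. By the symmetry indicated in the text, it suffices to treat the corner $p$ fixed just before Lemma \ref{L:Courant-Lebesgue at Corner}; in the shifted coordinates $p=(0,0)$, and the value $u(p)$ is well-defined as the limit of $u$ along $\partial^A$. Apply Lemma \ref{L:Courant-Lebesgue at Corner} at the dyadic scales $l_i=2^{-i}$ to obtain radii $l'_i\in(l_i,2l_i)$ such that
\[
\mathrm{osc}_{C_i}\,u\,\le\,\sqrt{\tfrac{N\pi}{\log 2}}\,\sqrt{E_i},\qquad C_i:=\{r=l'_i\}\cap\overline{D^+},
\]
where $E_i:=E(u|_{(D_{2l_i}\setminus D_{l_i})\cap D^+})$. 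Since the dyadic annuli are pairwise disjoint, $\sum_i E_i\le E(u)\le\varepsilon_0$, so $E_i\to0$. The endpoint of $C_i$ on $\partial^A$ tends to $p$, and by the assumed continuity of $u$ on $\partial^A$ its $u$-value tends to $u(p)$; combined with the arc oscillation bound, $\sup_{C_i}|u-u(p)|\to 0$. In particular the points $x_i:=C_i\cap\partial^C$ form a sequence on $\partial^C$ with $x_i\to p$ and $u(x_i)\to u(p)$, realizing the sequence described in the text.

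\textit{Filling in and two-dimensional continuity.} For an intermediate point $y\in[x_{i+1},x_i]\subset\partial^C$ with $d:=|y-p|$, the half-disk $D^+_{d/2}(y)$ is contained in $D^+$ (since $\mathrm{dist}(y,\partial^A)=d$), so a rescaled application of Theorem \ref{e-reg} to $u|_{D^+_{d/2}(y)}$ gives
\[
|\nabla u|(y)\,\le\,\frac{C\sqrt{E(u|_{D^+_{d/2}(y)})}}{d}.
\]
For $d\in[l'_{i+1},l'_i]$ the local energy is dominated by $E_{i-1}+E_i+E_{i+1}$, so integrating this bound along $\partial^C$ yields $|u(y)-u(x_i)|\le C'\sqrt{E_{i-1}+E_i+E_{i+1}}\log 4\to 0$, i.e., $u(y)\to u(p)$ as $y\to p$ along $\partial^C$. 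Together with the continuity on $\partial^A$, this shows $u$ is continuous on $\partial D^+$ at $p$. For an interior point $q\in D^+$ near $p$, either Theorem \ref{e-reg} applied in a half-disk centered at the projection of $q$ onto $\partial^C$, or H\'elein's interior $\varepsilon$-regularity centered at $q$, controls the oscillation of $u$ on a neighborhood of $q$ meeting $\partial D^+$; combined with the boundary continuity just established, this gives $u(q)\to u(p)$ as $q\to p$.

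\textit{Main obstacle.} The principal technical difficulty is that Theorem \ref{e-reg}'s gradient estimate degenerates near the Dirichlet arc $\partial^A$, which meets the free boundary $\partial^C$ exactly at $p$; consequently, no single application of $\varepsilon$-regularity on a half-disk centered at $p$ can yield the desired continuity. This is overcome by applying the estimate at each dyadic scale, always on a half-disk based on $\partial^C$ with radius comparable to the distance from the base point to $p$: at each such scale the blowup in the gradient bound is exactly compensated by the decay of the local energy $E_i$, and the summability $\sum_i E_i\le E(u)$ then furnishes a uniform modulus of continuity at $p$.
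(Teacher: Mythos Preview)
Your proof is correct and follows essentially the same approach as the paper: Courant--Lebesgue at dyadic scales around the corner to produce good arcs and boundary points, then the gradient estimate of Theorem~\ref{e-reg} on half-disks at each scale to fill in the intermediate points along $\partial^C$. Your write-up is in fact slightly cleaner than the paper's in two respects: you use the \emph{local} energy $E(u|_{D^+_{d/2}(y)})$ in the rescaled gradient bound (so the smallness threshold $\varepsilon_0$ is genuinely fixed and does not depend on the target $\varepsilon$, whereas the paper writes ``choose $\varepsilon_0$ small enough depending only on $\varepsilon$''), and you explicitly indicate how the full two-dimensional continuity at the corner follows from the boundary continuity plus another application of the gradient estimate, a step the paper's proof leaves implicit.
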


\begin{proof}
Since $u$ is continuous on $\partial^A$, by the discussion before Lemma \ref{L:Courant-Lebesgue at Corner}, it suffices to show that given $\eps>0$, there exists $\delta>0$ such that for $x\in \partial^C$ and $\vert x-p\vert\leq\delta$ we have $\vert u(x)-u(p)\vert\leq\eps$. By the continuity of $u$ on $\partial^A$, we can find $\delta_1>0$ such that if $p_l=(l,\alpha(l))\in \partial^A$ and $\vert p_l-p\vert\leq\delta_1$, then $\vert u(p_l)-u(p)\vert\leq\eps/10$. 
\vskip 1.5mm

Now we identify the unit interval $[0, 1]$ with half of $\partial^C=[-1, 1]\times\{0\}\subset \mathbf{R}^2$ and abuse the notation such that $x\in [0, 1]$ represents a point on $\partial^C$. Consider the decomposition
\[[0,1]=\cup_{k=0}^\infty\left[2^{-k-1}, 2^{-k}\right]=\cup_{k=0}^\infty I_k.\] 
Note that we have
\[\sum_{k=0}^\infty E\left(u\vert_{(D_{2^{-k-1}}\setminus D_{2^{-k}}) \cap D^+ }\right)\leq\eps_0.\]
Then there is some $K_1>0$ such that for all $k\geq K_1$ we have $E\left(u\vert_{(D_{2^{-k-1}}\setminus D_{2^{-k}})\cap D^+}\right)\leq \eps^2/(1000 N^2)$. Now using Lemma \ref{L:Courant-Lebesgue at Corner}, for any $k\geq K_1$ we can pick $x_k\in I_k$ such that 
$$\vert u(x_k)-u(p)\vert\leq \left\vert u(x_k)-u(p_{|x_k|})\right\vert + \left\vert u(p_{|x_k|})-u(p)\right\vert \leq \eps/8\,.$$

Now let us consider a family of half disks ${D^+_k} \subset D^+$, where $D^+_k$ has corner points $(2^{-k-2},0)$ and $(2^{-k}+2^{-k-2},0)$. Then the center of $D^+_k$ is $y_k := (2^{-k-1}+2^{-k-2}, 0)$ and the radius of $D^+_k$ is $r_k:= 2^{-k-1}$. Since each $D^+_k$ can only overlap with at most four other $D^+_k$'s, we have
\[\sum_{k=0}^\infty    E\left(u\vert_{D^+_k }\right)  \leq 4\eps_0.\]
Thus, for any $\eps_0>0$, there exists $K_2>0$ such that for any $k\geq K_2$ we have $E\left(u\vert_{D^+_k } \right)\leq \eps_0$. Then by rescaling and Theorem \ref{e-reg}, for any $y\in[2^{-k-1},2^{-k}]$ with $k\geq K_2$ we have
\begin{equation}
\vert\nabla u (y)\vert\leq C2^{k+1}\sqrt{\eps_0}.
\end{equation}
Then integration along the interval between $y$ and $x_k$ gives
\[\vert u(y)-u(x_k)\vert\leq 2C\sqrt{\eps_0}.\]
If we choose $\eps_0$ small enough (depending only on $\eps$), then for any $y\in[2^{-k-1},2^{-k}]$ with $k\geq K_2$ we have $\vert u(y)-u(x_k)\vert\leq \eps/4$. So for all $y\in(0,2^{-K})$ where $K = \max \{K_1, K_2\}$, we have
\[\vert u(y)-u(p)\vert\leq \eps.\]
Then we conclude the continuity of $u$ at $p$ and hence finish the proof of the theorem.
\end{proof}

As a corollary of Theorem \ref{T: Continuity of free boundary harmonic map}, we have
\begin{cor}
A weakly harmonic map $u$ with mixed Dirichlet and free boundaries on $D^+$ is continuous on $\overline{ D^+}$ provided that $u$ is continuous on $\partial^A$.
\end{cor}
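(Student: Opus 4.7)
The plan is to reduce the corollary to Theorem \ref{T: Continuity of free boundary harmonic map} by localizing around the two corner points and applying a conformal reparametrization. First, observe that the only points where continuity requires new work are the two corners $p_\pm := (\pm 1, 0) \in \overline{D^+}$: H\'elein's interior regularity gives smoothness of $u$ in $D^+$, Scheven's boundary regularity gives smoothness up to the interior of the free part $\partial^C$, and Qing's continuity result gives continuity up to the interior of the Dirichlet part $\partial^A$. All three regularity results are local and proceed via $\epsilon$-regularity on small balls, so they require only $u \in W^{1,2}$ and none of them depends on a global small-energy hypothesis.

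Fix $p = p_-$. Because $u \in W^{1,2}(D^+, \mathcal{N})$, the Dirichlet integral is absolutely continuous with respect to Lebesgue measure, so we may choose $r \in (0,1)$ small enough that
\[
E(u|_{\Omega_r}) < \varepsilon_0,
\]
where $\Omega_r := D^+ \cap B_r(p)$ and $\varepsilon_0$ is the threshold of Theorem \ref{T: Continuity of free boundary harmonic map}. The Jordan domain $\Omega_r$ has three smooth boundary arcs meeting at three corners: a free-boundary segment $\gamma_1 \subset \partial^C$, the interior arc $\gamma_2 := D^+ \cap \partial B_r(p)$, and a Dirichlet segment $\gamma_3 \subset \partial^A$. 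By the Riemann mapping theorem and Carath\'eodory extension, there is a conformal homeomorphism $\Phi : \Omega_r \to D^+$ extending continuously to $\overline{\Omega_r} \to \overline{D^+}$; by prescribing three boundary points we may arrange that $\Phi(p) = (1,0)$, $\Phi(\gamma_1) = \partial^C$, and $\Phi(\gamma_2 \cup \gamma_3) = \partial^A$.

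Now set $\tilde u := u \circ \Phi^{-1}$. Conformal invariance in two dimensions of the Dirichlet energy and of the free-boundary variational problem ensures that $\tilde u \in W^{1,2}(D^+, \mathcal{N})$ is a weakly harmonic map with mixed Dirichlet and free boundaries, satisfying $E(\tilde u) = E(u|_{\Omega_r}) < \varepsilon_0$ and $\tilde u(\partial^C) \subset \Gamma$. Its Dirichlet boundary values $\tilde u|_{\partial^A} = u \circ \Phi^{-1}|_{\partial^A}$ factor through the continuous homeomorphism $\Phi^{-1}|_{\partial^A} : \partial^A \to \gamma_2 \cup \gamma_3$, and $u$ is continuous on $\gamma_2\cup\gamma_3$: smooth on $\gamma_2$ by interior regularity, continuous on $\gamma_3$ by hypothesis, and continuous at the meeting point $\gamma_2\cap\gamma_3$ by Qing's theorem (this is an interior point of $\partial^A$). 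Theorem \ref{T: Continuity of free boundary harmonic map} therefore applies to $\tilde u$ and yields continuity of $\tilde u$ on $\overline{D^+}$. Composing with the continuous extension of $\Phi$ gives continuity of $u$ at $p_-$, and the identical argument at $p_+$ completes the proof.

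The main technical subtlety to check is that the variational structure—both the bulk harmonic map equation and the orthogonality condition $\partial_\nu \tilde u \perp T\Gamma$ on $\partial^C$—transfers correctly under $\Phi$, whose inverse is generally only H\"older continuous at the third, "extra" corner $\gamma_2 \cap \gamma_3$. This is essentially automatic once everything is phrased in the conformally invariant variational language: $\tilde u$ is critical for the Dirichlet energy among $W^{1,2}$ variations of $u\circ\Phi^{-1}$ with values in $\mathcal{N}$ and trace in $\Gamma$ along $\partial^C$, and this criticality is preserved under precomposition with any conformal $W^{1,2}$-homeomorphism regardless of its boundary smoothness.
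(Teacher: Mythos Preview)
Your argument is correct. The paper does not give an explicit proof of this corollary; it simply records it as an immediate consequence of Theorem \ref{T: Continuity of free boundary harmonic map}. The intended reading is almost certainly the more direct one: the proof of Theorem \ref{T: Continuity of free boundary harmonic map} is purely local near each corner, and the only places the hypothesis $E(u)\le\varepsilon_0$ enters are the estimates $\sum_k E(u|_{(D_{2^{-k-1}}\setminus D_{2^{-k}})\cap D^+})\le\varepsilon_0$ and $\sum_k E(u|_{D_k^+})\le 4\varepsilon_0$, which are used only to force the individual terms to zero for large $k$; finiteness of $E(u)$ already gives that. So the paper's implicit route is simply ``re-read the proof of Theorem \ref{T: Continuity of free boundary harmonic map} and note that global small energy is never used.''

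Your route is genuinely different: you treat Theorem \ref{T: Continuity of free boundary harmonic map} as a black box and transport the problem onto the model domain via a Riemann map. This is cleaner conceptually (no need to reopen the proof), at the cost of the extra conformal-mapping machinery and the checks that the variational structure and the continuity of the Dirichlet data survive the transplant. Both approaches are short; the paper's is more in keeping with the surrounding arguments, which repeatedly exploit that the estimates are local.

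One small point: when you verify that $\tilde u|_{\partial^A}$ is continuous, you should also say a word about the endpoint $(-1,0)=\Phi(\gamma_1\cap\gamma_2)$, i.e.\ continuity of $u$ on $\gamma_2$ up to its endpoint on $\partial^C$. This follows from Scheven's free-boundary regularity since $\gamma_1\cap\gamma_2$ lies in the interior of $\partial^C$, so it is not a gap, just an omission in the write-up.
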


\subsection{Existence of partial free boundary harmonic replacement}

\begin{thm}
\label{T:existence and regularity of free boundary harmonic replacement}
There exists $\varepsilon_0>0$ such that for any $v \in C^{0}(\overline{D^+}, \cN) \cap W^{1,2} (\overline{D^+}, \cN)$ with $v(\partial^C)\subset \Gamma$ and $E(v)\leq \varepsilon_0$, there exists a unique harmonic map $u\in C^0(\overline{D^+}, \cN) \cap W^{1,2}(D^+, \cN)$ such that $E(u)\leq \varepsilon_0$, $u=v$ on $\partial^A$, $u(\partial^C)\subset \Gamma$ and $u$ meets $\Gamma$ orthogonally along $\partial^C$.
\end{thm}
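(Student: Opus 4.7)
The plan is to obtain $u$ by the direct method of the calculus of variations and then extract its analytic properties from the earlier results in the paper. Let
\[ \mc A = \{ w \in W^{1,2}(D^+, \cN) : w\vert_{\partial^A} = v\vert_{\partial^A} \text{ (as traces)},\ w(\partial^C) \subset \Gamma \}. \]
Since $v \in \mc A$, this class is nonempty with $\inf_{\mc A} E \leq E(v) \leq \eps_0$. Take a minimizing sequence $u_n \in \mc A$. The bound $E(u_n) \leq E(v) + o(1)$ together with the fixed Dirichlet trace on $\partial^A$ gives uniform $W^{1,2}$ control, so after passing to a subsequence $u_n \rightharpoonup u$ weakly in $W^{1,2}(D^+, \R^N)$, with $u_n \to u$ strongly in $L^2$ and a.e. Since $\cN$ is closed, $u(x) \in \cN$ a.e.; the traces converge in $L^2$ on both $\partial^A$ and $\partial^C$, so $u \in \mc A$. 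Lower semicontinuity gives $E(u) \leq \liminf E(u_n) \leq \eps_0$, and $u$ is an energy minimizer in $\mc A$.

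Next I would derive the Euler--Lagrange system. For $\phi \in C^\infty_c(D^+, \R^N)$, the standard normal-projection variation $u_t = \Pi_{\cN}(u + t\phi)$ lies in $\mc A$ for $|t|$ small and yields the interior harmonic map equation \eqref{E:harmonic map equation} weakly in $D^+$. To get the free boundary condition, I would allow test vector fields $\phi$ compactly supported in $D^+ \cup (\partial^C)^\circ$ whose restriction to $\partial^C$ is tangent to $\Gamma$; then $u_t$ still maps $\partial^C$ into $\Gamma$, so $u_t \in \mc A$. The first variation now produces a boundary integral $\int_{\partial^C} \langle \phi, \partial_y u\rangle\, ds$ which must vanish for every such $\phi$, giving $\partial u/\partial y \perp \Gamma$ along $\partial^C$ in the weak sense. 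Working in Fermi coordinates along $\Gamma$ makes the construction of admissible tangential variations routine.

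For the regularity up to $\overline{D^+}$, I would invoke the machinery already in place. Interior smoothness comes from H\'elein's theorem; smoothness up to the interior of the free boundary $\partial^C$ follows from Scheven's reflection result \cite{Scheven06}; continuity up to the interior of the Dirichlet arc $\partial^A$ (using that $v\vert_{\partial^A}$ is continuous) is given by Qing \cite{Qing93}. The only remaining points are the two corners of $D^+$, and these are exactly handled by Theorem \ref{T: Continuity of free boundary harmonic map}. Concatenating these four pieces gives $u \in C^0(\overline{D^+}, \cN) \cap W^{1,2}(D^+, \cN)$.

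Uniqueness is immediate from the energy convexity: if $u'$ were another harmonic map with the stated properties and $E(u')\leq \eps_0$, then Corollary \ref{C:uniqueness of small energy free boundary harmonic maps} applied to the pair $(u,u')$ (sharing Dirichlet data on $\partial^A$ and free boundary on $\partial^C$) gives $u \equiv u'$. The main subtlety I anticipate is the closedness of $\mc A$ under weak convergence, specifically verifying $u(\partial^C) \subset \Gamma$: this is not given by weak $W^{1,2}$ convergence alone, so I would rely on compactness of the trace operator $W^{1,2}(D^+) \hookrightarrow L^2(\partial D^+)$ to pass to an a.e. convergent subsequence on $\partial^C$ and use that $\Gamma$ is a closed embedded submanifold. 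A secondary technical point is choosing the tangential variations near $\partial^C$ so that $\Pi_{\cN}(u+t\phi)$ still sends $\partial^C$ into $\Gamma$; this is where a smooth tubular/Fermi-coordinate projection onto $\Gamma$, rather than the ambient projection $\Pi_{\cN}$, is needed, and is the one place where the argument genuinely differs from the purely Dirichlet construction of Colding--Minicozzi.
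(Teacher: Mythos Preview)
Your proposal is correct and follows essentially the same route as the paper: define the admissible class, minimize energy via the direct method, obtain the weak Euler--Lagrange system (interior harmonic map equation plus the free boundary orthogonality), and then assemble continuity on $\overline{D^+}$ from H\'elein, Qing, Scheven, and the corner result Theorem~\ref{T: Continuity of free boundary harmonic map}; uniqueness comes from Corollary~\ref{C:uniqueness of small energy free boundary harmonic maps}. The only notable addition in the paper is that, once the minimizer $u$ is known to be weakly harmonic with partial free boundary, the energy convexity Theorem~\ref{T:convexity2} together with the Poincar\'e inequality (Lemma~\ref{L:Poincare inequality with partial zero boundary values}) upgrades the weak $W^{1,2}$-convergence of the minimizing sequence to strong $W^{1,2}$-convergence; you do not need this for the theorem as stated, but it is a useful byproduct.
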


\begin{rem}
\label{Re:free boundary harmonic replacement}
The map $u$ in Theorem \ref{T:existence and regularity of free boundary harmonic replacement} is usually called the (partial) {\em free boundary harmonic replacement} of $v$.
\end{rem}

\begin{proof} (of Theorem \ref{T:existence and regularity of free boundary harmonic replacement})
Combining H\'elein's interior regularity for weakly harmonic maps on two dimensional domains, Qing's (Dirichlet) boundary regularity \cite{Qing93},  Scheven's free boundary regularity (\cite{Scheven06}) and Theorem \ref{T: Continuity of free boundary harmonic map}, we know that the (partial) free boundary harmonic replacement $u$ of $v$, if it exists, is smooth in $D^+ $ and is continuous in $\overline{D^+}$. \vskip 1.5mm

The (partial) free boundary harmonic replacement can be constructed as follows:
suppose $v\in C^0(\overline{D^+}, \cN) \cap W^{1,2}(D^+, \cN)$ is such that $v(\partial^C)\subset \Gamma$ and $E(v)\leq\varepsilon_0$.  Let us define the space $\mc F$ to be the space of maps:
 \[\mc F:=\{w\in
 W^{1,2}(D^+, \mathcal{N}):\, w\vert_{\partial^A}=v\vert_{\partial^A},w(\partial^C)\subset \Gamma\}. \] 
Now we choose an energy minimizing sequence $u^i\in\mc F$, i.e.
\[ \lim_{i\to\infty}E(u^i)=\inf\{E(w):\, w\in\mc F\}. \] 
By Rellich compactness theorem, we can find a subsequence of $\{u^i\}$ that weakly converges to a $W^{1,2}$ map $u$. By first variation of energy functional at $u$ we know that $u$ is a weakly harmonic map with mixed Dirichlet and free boundaries on $D^+$. Then indeed we know that $\{u^i\}$ converges strongly in $W^{1,2}$ to $u$ by the energy convexity Theorem \ref{T:convexity2} and Poincar\'e inequality Lemma \ref{L:Poincare inequality with partial zero boundary values}. \end{proof}

\section{Overview of the variational approach}
\label{S:Overview of the variational approach}
In this section, we provide an overview for the proof of Theorem \ref{T:main1}. \vskip 1.5mm

As in the proof of the classical Plateau problem, the area functional is too weak to control the maps, so we have to change gear to the energy functional. The next result says that one can take an approximating sequence of sweepouts in a given homotopy class, so that their maximal energy converges to the maximal area. In particular, we have:

\begin{thm}
\label{T:area width equals energy width}
Given $\beta\in \Omega$ with $W = W(\Om_\beta)>0$, there exists a sequence of sweepouts $\gamma^j\in\Omega_\beta$ with 
\[ \max_{s\in[0,1]}E(\gamma^j(\cdot,s))\searrow W. \]
\end{thm}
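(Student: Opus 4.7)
The plan rests on the pointwise relationship between energy and area coming from AM-GM on the eigenvalues of the pullback metric $g_{ij} = \partial_i u \cdot \partial_j u$: one has $|\nabla u|^2 \geq 2\sqrt{\det g_{ij}}$ with equality iff $u$ is weakly conformal, so that $E(u) \geq c \cdot \area(u)$ for every $u \in W^{1,2}(D,\cN)$ (the constant $c$ depending on the normalization of $E$). In particular $\max_s E(\gamma(\cdot,s)) \geq W$ for every $\gamma \in \Om_\beta$. The content of the theorem is thus the matching upper bound: for each $\varepsilon > 0$ one should produce $\gamma \in \Om_\beta$ with $\max_s E(\gamma(\cdot,s)) < W + \varepsilon$. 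The idea is to start from a near-optimal area sweepout and reparametrize each slice by a self-diffeomorphism of $\overline{D}$ to make it nearly conformal; area is diffeomorphism-invariant, while energy drops toward area as the reparametrization approaches a conformal one.

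Concretely, first choose $\hat\gamma \in \Om_\beta$ with $\max_s \area(\hat\gamma(\cdot,s)) < W + \varepsilon/2$. For each slice $u_s := \hat\gamma(\cdot,s)$, approximate $u_s$ in $W^{1,2}\cap C^0$ by a smooth map still sending $\partial D$ into $\Gamma$, and by a small generic perturbation (away from constant or fully collapsed slices, which already have zero area and energy) arrange a branched immersion. Pulling back the metric of $\cN$ gives a (possibly degenerate) Riemannian metric on $D$; the classical existence of isothermal coordinates (equivalently, solving a Beltrami equation on $D$) produces a diffeomorphism $\phi_s \colon \overline{D} \to \overline{D}$, isotopic to the identity and satisfying $\phi_s(\partial D) = \partial D$, with respect to which $u_s \circ \phi_s$ is weakly conformal up to error $\varepsilon/8$. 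Consequently $E(u_s \circ \phi_s) \leq \area(u_s) + \varepsilon/4$.

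The hard part is arranging $\phi_s$ to depend continuously on $s$. Because $\hat\gamma(\cdot,s)$ is only continuous in $s$ with values in $W^{1,2} \cap C^0$, the pullback metrics---and hence their isothermal coordinate charts---need not vary continuously in any natural topology on $\mathrm{Diff}(\overline{D})$. The approach I would take is a partition-and-interpolation scheme: pick $0 = s_0 < s_1 < \cdots < s_n = 1$ fine enough that $\hat\gamma(\cdot,s)$ varies by less than $\varepsilon/8$ in $W^{1,2}$ across each subinterval; on the bulk of $[s_k, s_{k+1}]$ use the single reparametrization $\phi_{s_k}$ associated to the base slice $u_{s_k}$; on a narrow transition window near each junction $s_{k+1}$ interpolate between $\phi_{s_k}$ and $\phi_{s_{k+1}}$ through an explicit isotopy of orientation-preserving diffeomorphisms of $\overline{D}$ fixing $\partial D$ setwise. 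Because $\hat\gamma$ varies only slightly across a transition window, the resulting energy jump can be controlled by $\varepsilon/4$, keeping the final maximum energy below $W + \varepsilon$.

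It remains to verify that the new family $\tilde\gamma(x,s) := \hat\gamma(\phi_s(x), s)$ belongs to $\Om_\beta$. Since each $\phi_s$ is isotopic to the identity and preserves $\partial D$ setwise, the free boundary condition $\tilde\gamma(\cdot,s)(\partial D) \subset \Gamma$ is preserved, and the isotopy from $\phi_s$ to $\mathrm{id}$ furnishes a homotopy inside $\Omega$ from $\tilde\gamma$ to $\hat\gamma$; the endpoints $\tilde\gamma(\cdot,0)$ and $\tilde\gamma(\cdot,1)$ remain constant because reparametrization fixes constant maps. Applying the construction with $\varepsilon = 1/j$ and extracting a diagonal subsequence produces the desired sweepouts $\gamma^j \in \Om_\beta$ with $\max_s E(\gamma^j(\cdot,s)) \searrow W$.
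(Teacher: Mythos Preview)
Your overall strategy---reparametrize each slice to make it nearly conformal, so that energy drops to area---is the right one, and is what the paper does. But the partition-and-interpolation scheme you propose for continuity in $s$ has a genuine gap.

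The problematic sentence is: ``Because $\hat\gamma$ varies only slightly across a transition window, the resulting energy jump can be controlled by $\varepsilon/4$.'' This does not follow. On the transition window you are varying the \emph{reparametrization} $\psi$ through an isotopy from $\phi_{s_k}$ to $\phi_{s_{k+1}}$, and energy is not reparametrization-invariant: for an intermediate $\psi$ there is no reason $E(u_s\circ\psi)$ should be close to $\area(u_s)$---it can be arbitrarily large. Small variation of $\hat\gamma$ is irrelevant here; what you would need is that $\phi_{s_k}$ and $\phi_{s_{k+1}}$ are $C^1$-close so that the isotopy can stay $C^1$-close to a conformal map for $u_s$. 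But that is exactly the continuous-dependence statement you set out to bypass. A related issue afflicts the ``bulk'' step: using the fixed $\phi_{s_k}$ on nearby slices requires uniform $C^1$ bounds on $\phi_{s_k}$ (so that $u\mapsto E(u\circ\phi_{s_k})$ has a controlled modulus of continuity on $W^{1,2}$), and such bounds fail when the pullback metric $u_{s_k}^*g$ degenerates.

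The paper resolves both problems simultaneously. It first mollifies the sweepout (using Fermi coordinates near $\partial D$ to preserve the constraint $\partial D\to\Gamma$) so that each slice is $C^2$ and the pullback metrics $g(t)=\gamma(\cdot,t)^*g_{\cN}$ form a $C^0$ family of $C^1$ tensors. It then perturbs to a \emph{nondegenerate} family $\tilde g(t)=g(t)+\varepsilon g_0$; this makes the uniformizing maps $h_t:(D,g_0)\to(D,\tilde g(t))$ (normalized by fixing three boundary points) well-defined and---by results cited from \cite{CM08}, \cite{Zhou10}, \cite{Zhou17}---continuous in $t$ as maps into $C^0\cap W^{1,2}(D,D)$. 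The price of the $\varepsilon$-perturbation is an explicit error $\area(D,\tilde g(t))\leq \area(D,g(t))+O(\sqrt{\varepsilon})$, which is absorbed into the final estimate. The upshot is that continuity of the conformal reparametrization is not avoided but proved (via the cited lemmas), after first arranging enough regularity and nondegeneracy for those lemmas to apply.
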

\begin{remark*}
The proof is given in Section \ref{S:Conformal parametrization}.  Note that we have $W\leq \max_{s\in[0,1]}\area(\gamma^j(\cdot,s))\leq \max_{s\in[0,1]}E(\gamma^j(\cdot,s))\searrow W$.
\end{remark*}

The next ingredient is a tightening theorem. We first fix some notations concerning balls in $D$.
\begin{defi}
\label{D:generalized balls}
A {\em generalized ball} $B$ in the unique disk $D$ is either an interior ball of $D$, or the intersection with $D$ of a ball of $\mb R^2$ centered at some boundary point of $\partial D$. That is: $B\subset D$ or $B=B_r(x)\cap D$ where $x\in\partial D$ and $r<\frac{1}{2}$. We will call the first case a {\em classical ball} and the second case a {\em boundary ball} respectively. \vskip 1.5mm

Given $\rho>0$, when $B$ is a classical ball, we let $\rho B\subset D$ denote the ball with the same center of $B$ and radius $\rho$ times that of $B$; when $B$ is a boundary ball, we can define $\rho B\subset D$ as follows: there exists a unique fractional linear transformation $\Pi_B$ from $D$ to the upper half plane $\mb H^2$ so that $\Pi_B(B)=D^+$, and $\rho B$ is defined as $\Pi_B^{-1}(D^+_\rho)$.
\end{defi}
\begin{remark*}
Note that in the following, we will frequently identify a boundary ball $B$ with its image $\Pi_B(B)$ in $\mb H^2$.
\end{remark*}

The following result plays the role of deformation lemma in nonlinear analysis; (see \cite{Struwe08}).
\begin{thm}
\label{T:Tightening theorem}
Given $\beta\in \Omega$ with $W=W(\Om_\beta)>0$,  there exist a constant $\epsilon_1>0$ and a continuous function $\Psi:[0,\infty)\to[0,\infty)$ with $\Psi(0)=0$, both depending on $(\mc N, \Gamma)$,
so that given any $\tilde\gamma\in\Omega$ without non-constant harmonic slices other than $\gamma(\cdot,0),\gamma(\cdot,1)$, there exists $\gamma\in\Omega_{\tilde\gamma}$ such that $E(\gamma(\cdot,t))\leq E(\tilde\gamma(\cdot,t))$ for each $t$, and for each $t$ with $E(\tilde\gamma(\cdot,t))\geq W/2$, we have the following property:
\begin{itemize}
\item[($\ast$)] If $\mc{B}$ is any finite collection of disjoint generalized closed balls in $D$ with 
\[\int_{\cup_{\mc{B}}B}\vert\nabla\gamma(\cdot, t)\vert^2<\eps_1\]
and if $v:\cup_{\mc{B}}\frac{1}{8}B\to \mc N$ is the free boundary harmonic replacements of $\gamma(\cdot, t)$ on $\cup_{\mc{B}}\frac{1}{8} B$, 
then
\[ \int_{\cup_{\mc{B}}\frac{1}{8}B}\vert\nabla \gamma(\cdot,t)-\nabla v\vert^2\leq\Phi(E(\tilde\gamma(\cdot,t))-E(\gamma(\cdot,t))). \]
\end{itemize}
\end{thm}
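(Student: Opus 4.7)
My plan is to adapt Colding--Minicozzi's two-stage harmonic replacement construction (\cite{CM08}) to the free boundary setting, replacing the Dirichlet harmonic replacement on boundary generalized balls by the partial free boundary harmonic replacement provided by Theorem \ref{T:existence and regularity of free boundary harmonic replacement}. Fix $\varepsilon_0$ as in Theorems \ref{T:convexity1} and \ref{T:convexity2} and take $\varepsilon_1\ll\varepsilon_0$. First I would choose two finite families $\mathcal{B}_1,\mathcal{B}_2$ of pairwise disjoint closed generalized balls in $D$ of a fixed small radius, arranged so that every generalized ball $B$ of the form allowed in $(\ast)$ satisfies $\tfrac{1}{8}B\subset \tfrac{1}{4}B^*$ for some $B^*\in \mathcal{B}_1\cup\mathcal{B}_2$. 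This is the standard covering trick that allows a fixed two-family replacement scheme to be tested against an arbitrary collection.

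Slice by slice, I would then perform a first round of harmonic replacements on every $B\in\mathcal{B}_1$ whose local energy is below $\varepsilon_0$, producing an intermediate sweepout $\tilde\gamma_1$, and then a second round on $\mathcal{B}_2$ producing $\gamma$. The pointwise energy monotonicity $E(\gamma(\cdot,t))\leq E(\tilde\gamma(\cdot,t))$ and membership $\gamma\in\Omega_{\tilde\gamma}$ are built in since, within each ball, the original and its replacement are connected by an obvious homotopy; near $\Gamma$ this homotopy must be realized in Fermi coordinates so as to preserve the free boundary condition, as outlined in the paper's sketch of ideas. Continuity of $t\mapsto\gamma(\cdot,t)$ in $W^{1,2}\cap C^0$ rests on the uniqueness and stability of the partial free boundary harmonic replacement, which are consequences of the energy convexity Theorem \ref{T:convexity2} and the up-to-the-boundary continuity established in Section \ref{S:Existence and regularity of free boundary harmonic replacement}; near the energy threshold $\varepsilon_0$ one damps continuously between the original slice and its replacement by a cutoff in the local energy so as to avoid a discontinuity at the threshold.

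For the quantitative conclusion $(\ast)$, fix $t$ with $E(\tilde\gamma(\cdot,t))\geq W/2$ and a finite collection $\mathcal{B}$ with total energy below $\varepsilon_1$. By the swallowing property each $\tfrac{1}{8}B$ sits inside some $\tfrac{1}{4}B^*$ with $B^*\in\mathcal{B}_1\cup\mathcal{B}_2$, on which, at the corresponding stage, the map $u$ just before the stage was replaced by its (free boundary) harmonic replacement $w^*$. Energy convexity (Theorem \ref{T:convexity1} or \ref{T:convexity2} depending on whether $B^*$ is classical or a boundary ball) gives $\tfrac{1}{2}\int_{B^*}|\nabla u-\nabla w^*|^2\leq E(u|_{B^*})-E(w^*|_{B^*})$. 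Summing over the balls in a single family bounds the total $L^2$-discrepancy by twice the energy drop in that stage, hence by $2(E(\tilde\gamma(\cdot,t))-E(\gamma(\cdot,t)))$. A further application of energy convexity, comparing $\gamma(\cdot,t)|_{\tfrac{1}{8}B}$ with its own free boundary harmonic replacement $v$ and using that $v$ minimizes energy for the given boundary values of $\gamma(\cdot,t)$, transfers the bound from $B^*$ down to $\tfrac{1}{8}B$ and produces the desired function $\Psi$ with $\Psi(0)=0$.

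The main obstacle, as I see it, is assembling the two-stage replacement into a genuinely continuous path in $\Omega$. The $W^{1,2}\cap C^0$-continuity of the partial free boundary harmonic replacement under perturbations of the Dirichlet boundary data is a non-trivial free boundary analogue of the corresponding lemma in \cite{CM08}, and the $C^0$-continuity up to the corners of boundary generalized balls (where the Dirichlet arc meets the free arc) is exactly what the corner regularity Theorem \ref{T: Continuity of free boundary harmonic map} provides. A second delicate point is constructing the explicit homotopy between $\tilde\gamma$ and $\gamma$ inside $\Omega$: the naive linear interpolation in $\mathbf{R}^N$ followed by nearest-point projection to $\mathcal{N}$ need not map $\partial D$ into $\Gamma$, so one must interpolate in Fermi coordinates adapted to $\Gamma$ and reduce, on the free boundary, to an interpolation between curves on $\Gamma$, exactly the strategy foreshadowed in the paper's sketch of ideas.
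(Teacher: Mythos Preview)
Your covering step cannot work as stated. The test collections $\mathcal{B}$ in $(\ast)$ are constrained only by the total energy of $\gamma(\cdot,t)$ on $\cup_{\mathcal{B}}B$ being below $\varepsilon_1$; there is no constraint on the radii. A single generalized ball $B$ may have radius close to $1$ while carrying arbitrarily little energy, so no two fixed finite families ``of a fixed small radius'' can have the swallowing property that every admissible $\tfrac{1}{8}B$ lies inside some $\tfrac{1}{4}B^*$. Even if this were repaired, the transfer step ``a further application of energy convexity moves the bound from $B^*$ down to $\tfrac{1}{8}B$'' is not justified: after the second-stage replacement the map is no longer harmonic on the first-stage balls, and energy convexity alone does not compare the harmonic replacement of $\gamma(\cdot,t)$ on $\tfrac{1}{8}B$ with the first-stage replacement on $B^*$ when the two maps already differ on $\partial(\tfrac{1}{8}B)$.

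The paper proceeds differently and both of the above issues are addressed by separate mechanisms. First, the collections $\mathcal{B}_1,\ldots,\mathcal{B}_m$ (with $m$ depending on $\tilde\gamma$) together with continuous radius cutoffs $r_j(t)$ are chosen by a compactness argument over the compact set $\{t:E(\tilde\gamma(\cdot,t))\geq W/2\}$ (Lemma \ref{lem3.24}), arranged so that the resulting $m$-step replacement already decreases energy by at least a fixed fraction of the \emph{maximal} possible improvement $e_{\tilde\gamma,\varepsilon_0/8}(t)$; no swallowing of arbitrary test balls is attempted. Second, the comparison with an arbitrary test collection $\tfrac{1}{8}\mathcal{B}$ goes through the two-sided energy improvement inequalities of Lemma \ref{lem3.8}, whose proof in the boundary-ball case requires the Fermi-coordinate interpolation of Lemma \ref{L: lem311} to build comparison maps that keep $\partial^C$ inside $\Gamma$. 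Combining these gives $\Psi(x)=Cx+Cx^{1/2}$. You correctly flagged the Fermi-coordinate interpolation as the tool for producing the homotopy in $\Omega$, but its more essential role in this proof is inside the quantitative estimate (Lemma \ref{lem3.8}), which your outline does not invoke.
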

\begin{remark*}
The proof is given in Section \ref{S:Construction of the tightening process}.
\end{remark*}

We also need the following compactness result. Let $\eps_{SU}$ and $\eps_{F}$ be the small thresholds (depending only on $(\mc N, \Gamma)$) from \cite[Main Estimate 3.2]{Sacks-Uhlenbeck81} and \cite[Proposition 1.7]{Fraser00} respectively, so that we can get uniform interior estimates for harmonic maps or free boundary harmonic maps with energy less than $\eps_{SU}$ and $\eps_{F}$ respectively.

\begin{thm}
\label{T:Compactness theorem}
Suppose $\epsilon_1, E_0>0$ with $\epsilon_1<\min\{\eps_{SU}, \eps_F\}$ and $u^j: (D, \partial D)\to (\cN, \Gamma)$ is a sequence of maps in $C^0(\overline{D}, \cN) \cap W^{1,2}(D, \cN)$ with $E_0\geq E(u^j)$ satisfying
\begin{equation}
\label{E:area assumption in compactness theorem}
 \area(u^j)>E(u^j)-\frac{1}{j}, 
\end{equation}
and:
\begin{itemize}
\item[($\dagger$)] for any finite collection $\mc{B}$ of disjoint generalized closed balls in $D$ with 
\[\int_{\cup_{\mc{B}}B}\vert\nabla u^j\vert^2<\epsilon_1,\]
if $v: \cup_{\mc{B}}\frac{1}{8}B\to \mc N$ is the free boundary harmonic replacements of $u_j$ on $\cup_{\mc{B}}\frac{1}{8}B$, then
\[\int_{\cup_{\mc{B}}\frac{1}{8}B}\vert\nabla u^j-\nabla v\vert^2\leq\frac{1}{j}.\]
\end{itemize}
Then a subsequence of the $u^j$'s varifold converges to a collection of harmonic maps $v_0, \cdots, v_m: (D, \partial D)\to (\cN, \Gamma)$ with free boundary and harmonic spheres $\tilde v_1, \cdots, \tilde v_l: \mathbf{S}^2\to \cN$. Moreover, the energy identity holds $\sum_{i=0}^m E(v_i)+\sum_{k=1}^l E(\tilde v_k)=\lim_{j\to\infty}E(u^j)$.
\end{thm}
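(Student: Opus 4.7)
The plan is to follow the Colding--Minicozzi bubbling scheme for min-max minimal spheres, adapted to accommodate the free boundary. The argument naturally splits into three stages: extract a weak limit (the \emph{body map}), iteratively extract bubbles at energy concentration points, and show that no energy is lost in the necks connecting these pieces. For the body, the bound $E(u^j)\le E_0$ gives a subsequence converging weakly in $W^{1,2}(D,\cN)$ and strongly in $L^2$ to some $v_0$ with $v_0(\partial D)\subset\Gamma$. Define the finite concentration set
\[ \mathcal{S} := \Big\{ x\in\overline{D} : \limsup_{j\to\infty}\int_{B_r(x)\cap D}|\nabla u^j|^2 \ge \tfrac{\eps_1}{2}\ \text{for every } r>0 \Big\}. \]
Away from $\mathcal{S}$, cover by generalized balls $B$ on which $E(u^j;B)<\eps_1$; hypothesis $(\dagger)$ then forces $u^j|_{\tfrac18 B}$ to be $W^{1,2}$-close to its (partial free boundary) harmonic replacement $v^j$ given by Theorem \ref{T:existence and regularity of free boundary harmonic replacement}. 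Small-energy harmonic map theory (Sacks--Uhlenbeck interior $\eps$-regularity together with Fraser's free boundary version) provides uniform $C^1$ bounds and smooth subsequential limits $v^j\to v_\infty$; combined with $\|u^j-v^j\|_{W^{1,2}}\to 0$ from $(\dagger)$ this gives $u^j\to v_0=v_\infty$ strongly on $\tfrac18 B$, so $v_0$ is a weakly harmonic map with free boundary on $\overline{D}\setminus\mathcal{S}$, and a standard removable singularity argument (combined with Scheven's reflection at boundary points of $\mathcal{S}$) extends it smoothly across $\mathcal{S}$.

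\textbf{Bubbling at the concentration points.} At each $x\in\mathcal{S}$, select $x_j\to x$ and $r_j\to 0$ realizing
\[ \sup_{y} \int_{B_{r_j}(y)\cap D}|\nabla u^j|^2 \;=\; \tfrac{\eps_1}{4} \]
attained at $y=x_j$, and set $\wti u^j(z)=u^j(x_j+r_j z)$. The rescaled maps have bounded energy and inherit a $(\dagger)$-type condition on compact subsets of the limit domain, which is either $\R^2$ (if $\mathrm{dist}(x_j,\partial D)/r_j\to\infty$) or the upper half-plane $\mb H^2$ with $\Gamma$-constrained boundary. Repeating the Stage-1 reasoning on the rescaled maps produces a strong subsequential limit which is either a harmonic map $\R^2\to\cN$---extended by Sacks--Uhlenbeck removable singularity to a harmonic sphere $\ti v_k$---or a harmonic free boundary disk $(\mb H^2,\partial\mb H^2)\to(\cN,\Gamma)$, which after a conformal identification with $(D,\partial D)$ and a removable singularity at the resulting boundary puncture gives one of the $v_i$. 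The normalization at $x_j$ guarantees non-triviality, and each bubble consumes at least $\min\{\eps_{SU},\eps_F\}$ of energy, so iterating on any residual concentration inside the rescaled pictures (bubbles on bubbles) terminates in finitely many steps and produces the full bubble tree.

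\textbf{Neck analysis and energy identity.} The remaining and hardest point is to show that the energy of $u^j$ on the annular ``neck'' regions between successive scales tends to zero. In conformal coordinates each neck becomes either a long cylinder $[a_j,b_j]\times S^1$ (interior) or a long rectangle $[a_j,b_j]\times[0,\pi]$ with $[a_j,b_j]\times\{0,\pi\}$ constrained in $\Gamma$ (free boundary), with $b_j-a_j\to\infty$ and energy below $\eps_1$ on every unit-length sub-piece. Theorem \ref{e-reg}, applied after Scheven's reflection in the free boundary case, upgrades this to a pointwise bound $|\nabla u^j|\le C\sqrt{\eps_1}$ on the neck away from its ends. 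On a cylinder a Pohozaev identity combined with $\int|\partial_\theta u^j|^2 \ll \int|\nabla u^j|^2$ (the angular-vs-total energy trick of Colding--Minicozzi) forces exponential decay of the radial energy inward from both ends. In the half-rectangle case an extra boundary integral along $\partial^C$ appears in the Pohozaev identity; this is precisely where the ``delicate doubling argument'' flagged in the authors' sketch enters: reflecting $u^j$ across $\Gamma$ in Fermi coordinates and using the pointwise gradient bound of Theorem \ref{e-reg} estimates the boundary contribution by interior energy on a slightly larger piece, so the angular-vs-total comparison survives. Integrating the resulting exponential decay over each neck yields $\int_{\text{neck}}|\nabla u^j|^2\to 0$, hence the energy identity $\sum_i E(v_i)+\sum_k E(\ti v_k)=\lim_j E(u^j)$, and together with the strong convergence of Stages~1--2 away from the bubble points, the claimed varifold convergence. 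The conformality hypothesis $\area(u^j)>E(u^j)-1/j$ is not needed for harmonicity but ensures the limit maps are weakly conformal and therefore parametrize (possibly branched) minimal disks and spheres.
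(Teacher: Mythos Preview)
Your Stages 1--2 are broadly in line with the paper's approach (weak limit away from a finite concentration set, then iterated conformal rescaling at singular points), though the paper's normalization of the rescaling is different from the Sacks--Uhlenbeck ``maximal ball'' choice you describe: the paper centers the dilation on a boundary point $y_j\in\partial D$ and chooses the radius so that a \emph{fixed} amount $\eps_3$ of energy sits in the complement $B_\rho\setminus B_{r_j}(y_j)$; this is what guarantees the concentration drops by a definite amount at each step and that all new singular points lie in a controlled half of the rescaled disk.

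The genuine gap is in Stage 3. The angular-versus-total comparison (your ``Pohozaev plus Colding--Minicozzi trick'') does \emph{not} force the neck energy to zero. What the paper actually proves (Theorem \ref{T:theta energy estimates for free boundary harmonic maps} for harmonic maps, then Lemma \ref{L:theta energy estimates for free boundary almost harmonic maps} and Theorem \ref{T: Theta energy small for almost harmonic maps} for almost harmonic maps) is only that on a long half-cylinder with small energy, the $\theta$-energy is an arbitrarily small fraction of the total energy. The ODE $f''\ge \tfrac{1}{4C}f - a$ is for $f(t)=\int_t|u_\theta|^2$; it gives exponential decay of the \emph{angular} energy, not of $\int_t|u_t|^2$ or of $\int_{\text{neck}}|\nabla u^j|^2$. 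In fact the quantity $\int_t(|u_t|^2-|u_\theta|^2)$ is constant in $t$ for harmonic maps, so if energy is trapped in the neck it sits entirely in the radial direction and does not decay at all.

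This is exactly where the hypothesis \eqref{E:area assumption in compactness theorem} enters, and your last sentence misidentifies its role. The paper argues the energy identity by \emph{contradiction}: if a neck carried energy $\ge\hat\delta>0$, the $\theta$-energy smallness from the previous paragraph together with Lemma \ref{L:comparison between area and energy on long cylinders} gives $\area(u^j|_{A_j})\le \tfrac89 E(u^j|_{A_j})$, hence $\area(u^j)\le E(u^j)-\tfrac19\hat\delta$ for all large $j$, contradicting $\area(u^j)>E(u^j)-\tfrac1j$. The area assumption is therefore \emph{essential} for the energy identity in this scheme; conformality of the limit disks and spheres, by contrast, follows from Sacks--Uhlenbeck and Fraser and needs no such hypothesis.
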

\begin{remark*}
The proof is given in Section \ref{S:Compactness of maximal slices}. Note that $E(v_i)=\area(v_i)$ and $E(\tilde v_j)=\area(\tilde v_j)$.
\end{remark*}

Now we prove the main Theorem \ref{T:main1} with these three theorems.

\begin{proof}[Proof of Theorem \ref{T:main1}]
Choose a sequence $\tilde\gamma^j\in\Omega_\beta$ as in Theorem \ref{T:area width equals energy width}, and we assume that
\[\max_{t\in[0,1]}E(\tilde\gamma^j(\cdot,t))<W+\frac{1}{j}.\]
We can slightly change the parametrization so that $\tilde\gamma^j$ maps a small open subset of $D$ to a point so that each slice cannot be harmonic unless it is a constant map; (we refer details to \cite[footnote 8]{CM08}). Applying Theorem \ref{T:Tightening theorem} to $\tilde\gamma^j$ gives a sequence $\gamma^j\in\Omega_\beta$. We will show $\gamma^j$ has the desired properties. \vskip 1.5mm

Let us argue by contradiction. Let $\mc G^{W}$ be the set of collections of harmonic maps from $\mathbf{S}^2\to \mc N$ and free boundary harmonic maps $(D, \partial D)\to(\mc N, \Gamma)$ so that the sum of the energies is exactly $W$. Suppose $\{\gamma^j\}$ does not have the desired property, which means that there exists some $\eps>0$ such that there exist $j_k\to\infty$ and $s_k\in[0,1]$ with $d_V(\gamma^{j_k}(\cdot,s_k),\mc G^W)\geq\eps$ and $\area(\gamma^{j_k}(\cdot,s_k))>W-1/k$. Then by $E(u)\geq\area(u)$ we get $\lim_{k\to\infty}E(\gamma^{j_k}(\cdot,s_k))=W$, and 
\[ E(\tilde\gamma^{j_k}(\cdot,s_k))-E(\gamma^{j_k}(\cdot,s_k))\leq E(\tilde\gamma^{j_k}(\cdot,s_k))-\area(\gamma^{j_k}(\cdot,s_k))\leq\frac{1}{k}+\frac{1}{j_k}\to 0. \]
Since the tightening process decreases the energy, we get
\[E(\gamma^{j_k}(\cdot,s_k))-\area(\gamma^{j_k}(\cdot, s_k))\to 0, \mbox{ as $k\to\infty$}.\]

By Theorem \ref{T:Tightening theorem} we have that: if $\mc{B}$ is any finite collection of disjoint generalized closed balls in $D$ with 
\[\int_{\cup_{\mc{B}}B}\vert\nabla\gamma^{j_k}(\cdot, s_k)\vert^2<\eps_1,\]
and if $v:\cup_{\mc{B}}\frac{1}{8}B\to \mc N$ is the free boundary harmonic replacement of $\gamma^{j_k}(\cdot, s_k)$ on $\cup_{\mc{B}}\frac{1}{8}\partial B$, then
\[\int_{\cup_{\mc{B}}\frac{1}{8}B}\vert\nabla \gamma^{j_k}(\cdot, s_k)-\nabla v\vert^2\leq\Phi(\frac{1}{k}+\frac{1}{j_k})\to 0.\]

Therefore applying Theorem \ref{T:Compactness theorem} gives a subsequence of $\gamma^{j_k}(\cdot,s_k)$'s that varifold converges to a collection of desired harmonic disks with free boundary and harmonic spheres. The energy identity implies that the sum of the energies of the limit is exactly $W.$ However this contradicts $d_V(\gamma^{j_k}(\cdot,s_k),\mc G^W)\geq\eps$. This finishes the proof of Theorem \ref{T:main1}. 
\end{proof}

\section{Conformal parametrization}
\label{S:Conformal parametrization}

In this section, we prove Theorem \ref{T:area width equals energy width}. The main idea of the proof follows \cite[Appendix D]{CM08}; (see also \cite[Section 3]{Zhou10} and \cite[Section 3]{Zhou17}). For a given sweep-out, we will find conformal reparametrizations of the regularization of this sweep-out, so that the energy of each slice of this family can not be much larger than that of the area. 

\begin{proof}[Proof of Theorem \ref{T:area width equals energy width}]
First we claim that for a given sweep-out $\tilde\gamma(\cdot,t)\in\Omega_{\tilde\gamma}$, we can find a regularized sweep-out $\gamma(\cdot,t)\in\Omega_{\tilde\gamma}$ which lies in $C^0\big([0, 1], C^2(D,\mc N)\big)$ as a map of $t$, such that $\gamma(\cdot, t)$ is closed to $\tilde\gamma(\cdot, t)$ uniformly in the $W^{1,2}\cap C^0$-norm for all $t\in[0, 1]$. This follows from a standard argument using mollification just like \cite[Lemma D.1]{CM08}. Here we only point out necessary modifications of \cite[Lemma D.1]{CM08} to make sure that the images of $\partial D$ under each slice $\gamma(\cdot,t)$ lie in the constraint submanifold $\Gamma$. In particular, near the boundary $\partial D$ we can first enlarge the domain $D$ to $D_{1+\al}$ (for small $\al>0$) by reflecting $\tilde\gamma(\cdot, t)$ across $\Gamma$ in the Fermi coordinates around $\Gamma$ and then do classical mollification under the Fermi coordinates. The mollified maps when restricted to $D$ will map $\partial D$ to $\Gamma$ by our construction. By the classical mollification result \cite[page 250, Theorem 1]{Evans} and Lemma \ref{L:Fermi equivalent to Rn}, we can make sure that the obtained maps are closed to $\tilde\gamma(\cdot, t)$ uniformly in $W^{1,2}\cap C^0$ near $\partial D$. In the interior of $D$, we can just mollify $\tilde\gamma(\cdot, t)$ in $\mb R^N$. To combine them, we can chose a partition of unity to glue these two mollifications together, and by the same argument as \cite[page 252, Theorem 3]{Evans}, the obtained maps are also close to $\tilde\gamma(\cdot, t)$ uniformly in $W^{1,2}\cap C^0(D)$. Up to this step, under the obtained maps the boundary $\partial D$ goes into $\Gamma$, but the interior of $D$ may get out of $\mc N$. Finally, one can follow \cite[Lemma D.1]{CM08} to project these maps to $\mc N$ using the nearest point projection to get the desired $\gamma(\cdot, t)$. By choosing the mollification parameter small enough, we can make sure $\max_{t\in[0, 1]}\Vert \gamma(\cdot, t)-\tilde\gamma(\cdot, t)\Vert$ is as small as we want. Note that an explicit homotopy between $\gamma$ and $\tilde\gamma$ is given by letting the mollification parameter goes to $0$. So we finish the sketch of the proof of the claim.\vskip 1.5mm

Then $\gamma(\cdot,t)$ induce a continuous one-parameter family of $C^1$ metrics $g(t)=\gamma(t)^*(\text{metric on }\mc N)$ on $D$. These family of metrics may be degenerate, so we define the perturbed metrics as $\tilde g(t)=g(t)+\eps g_0$ where $g_0$ is the standard flat metric on $D$. Then by \cite[Lemma D.6]{CM08}, \cite[Lemma 3.6]{Zhou10}  and \cite[Proposition 3.1]{Zhou17}, we can construct a family of conformal reparametrizations $h_t:D_{g_0}\to D_{\tilde g(t)}$ (which fix three given points on $\partial D$) that varies continuously in $C^0\cap W^{1,2}(D, D)$. Then with the conformality we can control the energy:
\begin{equation}
\begin{split}
E(\gamma(\cdot,t)\circ h_t)&=E(h_t:D_{g_0}\to D_{g(t)})\\
&\leq E(h_t:D_{g_0}\to D_{\tilde g(t)})=\area(D_{\tilde g(t)})\\
&=\int_D(\det(g_0^{-1}g(t))+\eps Tr(g_0^{-1}g(t))+\eps^2)^{1/2} d\vol_{g_0}\\
&\leq \area(D_{g(t)})+\pi(\eps^2+2\eps\sup_t\vert g_0^{-1}g(t)\vert)^{1/2}.
\end{split}
\end{equation}
Choose $\eps>0$ so that $\pi(\eps^2+2\eps\sup_t\vert g_0^{-1}g(t)\vert)^{1/2}<\delta/2$, we get $E(\gamma(\cdot,t)\circ h_t)-\area(D_{g_t})\leq\delta/2$. Then if $\tilde\gamma^j$ is a sequence of sweepouts in $\Omega_\beta$, then $\gamma^j\circ h_t$ constructed as above is a desired sequence.
\end{proof}

\section{Construction of the tightening process}
\label{S:Construction of the tightening process}

This section is devoted to the proof of Theorem \ref{T:Tightening theorem}. 

\subsection{Continuity of harmonic replacement}
In this part we want to prove that the harmonic replacement process is actually continuous as a map from $C^0(\overline {D^+})\cap W^{1,2}(D^+)$ to itself if we restrict to small energy maps. This generalizes the continuity of harmonic replacement process on interior balls by Colding-Minicozzi \cite{CM08}.

\begin{thm}
\label{T:continuity of harmonic replacement}
Let $\eps_0$ be as in the Theorem \ref{T:convexity2} and set 
\[\mc M=\{u\in C^0(\overline {D^+}, \mc N)\cap W^{1,2}(D^+, \mc N):\, E(u)\leq \eps_0, u(\partial^C)\subset\Gamma\}.\]
Given $u\in\mc M$, let $w\in\mc M$ be the unique free boundary harmonic replacment of $u$ (produced by Theorem \ref{T:existence and regularity of free boundary harmonic replacement}), 
then $u\to w$ is continuous as a map from $C^0(\overline{D^+})\cap W^{1,2}(D^+)$ to itself.
\end{thm}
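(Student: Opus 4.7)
The plan is to adapt the Colding--Minicozzi argument for the interior Dirichlet case \cite{CM08} to the free boundary setting, with the principal new ingredient being comparison maps built using Fermi coordinates around $\Gamma$ so that the free boundary condition is preserved at the two corner points where $\partial^A$ meets $\partial^C$. Given $u_j\to u$ in $C^0\cap W^{1,2}$ with replacements $w_j,w$, I would first establish $w_j\to w$ in $W^{1,2}(D^+)$ using the energy convexity of Theorem \ref{T:convexity2}, and then upgrade to $C^0(\overline{D^+})$ via equicontinuity, using the $\varepsilon$-regularity Theorem \ref{e-reg} away from $\partial^A$ and a quantitative version of the corner-continuity Theorem \ref{T: Continuity of free boundary harmonic map} near $\partial^A$. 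Note that $E(w_j),E(w)\le\varepsilon_0$ by energy minimality, so the hypotheses of Theorems \ref{T:convexity2} and \ref{e-reg} apply throughout.

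For $\rho>0$ small, let $\Sigma_\rho:=\{x\in\overline{D^+}:\dist(x,\partial^A)<\rho\}$ be the thin collar along the Dirichlet arc. I would construct two modifications: $\tilde w_j$ equal to $w_j$ outside $\Sigma_\rho$ but with trace $u|_{\partial^A}$ on $\partial^A$, and symmetrically $\hat u_j$ equal to $w$ outside $\Sigma_\rho$ with trace $u_j|_{\partial^A}$. In the portion of $\Sigma_\rho$ that stays a definite distance from the corners $(\pm 1,0)$, one can linearly interpolate in $\R^N$ between the two Dirichlet traces and project onto $\mc N$ by the nearest-point projection as in \cite{CM08}. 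In neighborhoods of each corner, where $\Sigma_\rho$ meets $\partial^C$, I would instead interpolate in Fermi coordinates $(y,z)$ with $y\in\Gamma$ and $z$ normal to $\Gamma$: the endpoints lie in $\Gamma$ at the corner itself, so interpolating along $\Gamma$ and then inside $\mc N$ keeps the free boundary condition $\subset\Gamma$ intact. Using $\Vert u_j-u\Vert_{C^0}\to 0$ together with a trace Courant--Lebesgue estimate on nested annular collars, one can select $\rho=\rho_j\to 0$ so that
\[ |E(\tilde w_j)-E(w_j)|+|E(\hat u_j)-E(w)|\longrightarrow 0. \]

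With these competitors in hand, the $W^{1,2}$ convergence is immediate. By minimality of $w_j$ and $w$ in their respective admissible classes (Theorem \ref{T:existence and regularity of free boundary harmonic replacement}), $E(w_j)\le E(\hat u_j)$ and $E(w)\le E(\tilde w_j)$; chaining these with the estimates above forces $E(w_j)\to E(w)$ and $E(\tilde w_j)-E(w)\to 0$. Applying the energy convexity of Theorem \ref{T:convexity2} to the weakly harmonic map $w$ and its competitor $\tilde w_j$ (which share the Dirichlet trace $u|_{\partial^A}$ and the free boundary condition) gives
\[ \tfrac{1}{2}\int_{D^+}|\nabla\tilde w_j-\nabla w|^2\,dx\,\le\,E(\tilde w_j)-E(w)\longrightarrow 0, \]
and since $\tilde w_j$ agrees with $w_j$ outside the vanishing-energy collar $\Sigma_{\rho_j}$, I conclude $w_j\to w$ in $W^{1,2}(D^+)$. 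For the $C^0$ step, Theorem \ref{e-reg} yields a uniform gradient bound $|\nabla w_j|(x)\le C\sqrt{\varepsilon_0}/(1-|x|)$ on $D^+\cup(\partial^C)^\circ$, giving equicontinuity away from $\partial^A$; quantifying the dyadic covering argument of Theorem \ref{T: Continuity of free boundary harmonic map} then produces a modulus of continuity on $\overline{D^+}$ depending only on $\varepsilon_0$ and on the common modulus of continuity of $\{u_j|_{\partial^A}\}$, which is uniform in $j$ since $u_j\to u$ in $C^0$. Arzel\`a--Ascoli together with a.e.\ convergence from the $W^{1,2}$ step yields $w_j\to w$ uniformly along a subsequence, and the full sequence converges by the uniqueness Corollary \ref{C:uniqueness of small energy free boundary harmonic maps}.

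The step I expect to be the main obstacle is the comparison map construction in the collar $\Sigma_{\rho_j}$. In the interior case of \cite{CM08}, the nearest-point projection to $\mc N$ of the linear $\R^N$-interpolation gives an admissible competitor, but this projection does not in general map $\Gamma$ into $\Gamma$, so the free boundary condition breaks down along the two arcs where $\Sigma_\rho$ meets $\partial^C$ near the corners. Working in Fermi coordinates around $\Gamma$ resolves this, but one must estimate the additional metric factors carefully and glue the two coordinate regimes smoothly so that the collar contributes only $o(1)$ energy and the resulting maps lie in $W^{1,2}$.
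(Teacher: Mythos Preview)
Your $C^0$-continuity step matches the paper's argument. The real issue is the $W^{1,2}$ step: the collar modification you describe does not produce an admissible competitor with controlled energy.

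As written, $\tilde w_j$ equals $w_j$ on $D^+\setminus\Sigma_\rho$ and interpolates in $\Sigma_\rho$ between $w_j$ on the inner edge of the collar and $u|_{\partial^A}$ on the outer edge. But you have no a priori control over $w_j$ on that inner arc: the harmonic replacement may differ substantially from $u_j$ away from $\partial^A$, and you do not yet know $E(w_j|_{\Sigma_{\rho_j}})\to 0$, so $|E(\tilde w_j)-E(w_j)|\to 0$ is unjustified. If instead you mean to rescale $w_j$ into $D^+_{1-\rho}$ and fill the band by interpolating between the two Dirichlet \emph{traces} $u_j|_{\partial^A}$ and $u|_{\partial^A}$ (which is what ``linearly interpolate between the two Dirichlet traces'' suggests), the energy of the band is controlled by $\big(\int_{\partial^A}|u'_j|^2+|u'|^2\big)^{1/2}\big(\int_{\partial^A}|u'_j-u'|^2\big)^{1/2}$ in the style of \cite[Lemma 3.11]{CM08}. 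For maps that are merely in $C^0(\overline{D^+})\cap W^{1,2}(D^+)$ the trace on $\partial^A$ lies only in $H^{1/2}$, not in $H^1$, so these integrals need not be finite and the competitor need not even be in $W^{1,2}$. Invoking Courant--Lebesgue does not help here: it selects a good \emph{interior} radius $r<1$, not the boundary arc $\partial^A$, and on such an interior arc you are back to needing values of $w_j$, which you do not control.

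The paper avoids this circularity by arguing by contradiction and never using $w_i$ in the interior of the comparison map. Assuming $E(w_i)\ge E(w_\infty)+\varepsilon$ along a subsequence, it chooses by Fubini a fixed radius $r_1<1$ on which both $u_\infty$ and $u_i$ have small tangential energy, and then builds a five-piece competitor $v_i$ with the same Dirichlet data as $w_i$ out of: a rescaled copy of $w_\infty$; an inverted thin band of $u_\infty$ on $D^+\setminus D^+_{r_1}$; an interpolation on a ``modified band'' (half-annulus with two small half-disks excised at the chord) between $u_\infty|_{\{r=r_1\}}$ and $u_i|_{\{r=r_1\}}$ done in Fermi coordinates so that the chord boundary lands in $\Gamma$; two small free boundary caps filling those half-disks (also built in Fermi coordinates); and finally $u_i$ on $D^+\setminus D^+_{r_1}$. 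All quantities here are controlled by the energies of $u_\infty,u_i$ near $\partial^A$ and by $\|u_\infty-u_i\|_{C^0}$, yielding $E(v_i)\le E(w_\infty)+2\varepsilon/3<E(w_i)$, which contradicts minimality of $w_i$. Your intuition that the delicate point is the interpolation in Fermi coordinates near the corners is correct, but it enters in this more elaborate construction (Lemmas \ref{L:small free boundary map}--\ref{L:Interpolation modified band} in the paper), not in a single thin collar along $\partial^A$.
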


\subsubsection{$W^{1,2}$-continuity}
First we prove this map is $W^{1,2}$ continuous. In \cite{CM08}, Colding-Minicozzi's idea is to construct a comparison map by interpolating in $\mb R^N$ between two maps of the same boundary values in $W^{1, 2}(D, \mc N)$, and then project the interpolation back to $\mc N$. In free boundary case, however, such interpolation-projection trick may not leave the image of $\partial^C$ lying inside $\Gamma$. Here we prove the $W^{1, 2}$-continuity by contradiction. The main idea is that if the $W^{1,2}$-continuity fails, then we can find a sequence $u_i$ converging to $u_\infty$, but the replacements $w_i$'s of the $u_i$'s have energy strictly greater than the energy of $w_\infty$. In this scenario, we are able to construct some comparison maps $v_i$ sharing the same fixed boundary with $w_i$, but having energy strictly smaller than that of $w_i$ for $i$ large. This contradicts the uniqueness of small energy free boundary harmonic map; (see Corollary \ref{C:uniqueness of small energy free boundary harmonic maps}).\vskip 1.5mm

The key ingredient in our proof is the construction of certain new comparison maps. We first collect a few preliminary results for the construction. \vskip 1.5mm

There are five components in the comparison map. One of the components consists of ``small" free boundary harmonic maps. The following lemma shows that the energy of these maps are actually small.

\begin{lemma}
\label{L:small free boundary map}
There exists $\delta_0>0$ such that if a Lipschitz map $f:[0,\pi]\to \mc N$ satisfies $f(0)\in\Gamma,\, f(\pi)\in\Gamma$ and $\int_0^\pi\vert f'\vert\leq\delta\leq\delta_0$, then there exists a map $v:D^+\to \mc N$ such that $v(1,\theta)=f(\theta)$, $v(\partial^C)\subset\Gamma$ and $E(v)\leq C\delta^2$ for some universal constant $C$ depending only on $\mc N,\Gamma,\delta_0$.
\end{lemma}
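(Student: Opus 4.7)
The plan is to construct $v$ explicitly in Fermi coordinates based at $f(0)\in\Gamma$ and to estimate its energy using the bound $\int_0^\pi|f'|\leq\delta$. First, I choose $\delta_0>0$ small enough that, for every point $p\in\Gamma$, Fermi coordinates $\Psi:U\to\mathbb{R}^n$ are defined on a $2\delta_0$-neighborhood $U$ of $p$ in $\mathcal{N}$ (with $\Gamma$ identified with $\mathbb{R}^{n-l}\times\{0\}$, and the metric of $\mathcal{N}$ uniformly comparable to the Euclidean one, the comparison constants depending only on $(\mathcal{N},\Gamma)$). Because $|f(\theta)-f(0)|\leq\int_0^\theta|f'|\leq\delta\leq\delta_0$, the image of $f$ lies in a single chart based at $p_0:=f(0)$, and the decomposition $f(\theta)=(f_T(\theta),f_N(\theta))$ satisfies $f_N(0)=f_N(\pi)=0$.

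In Fermi coordinates, the first candidate for $v$ is the radial cone $v(r,\theta):=r\,f(\theta)=(rf_T(\theta),rf_N(\theta))$. One checks directly that $v(1,\theta)=f(\theta)$ on $\partial^A$ and that on $\partial^C$ the normal component vanishes: at $\theta=0$, $v(r,0)=rf(0)=0\in\Gamma$, and at $\theta=\pi$, $v(r,\pi)=(rf_T(\pi),0)\in\Gamma$. So the boundary conditions are satisfied. A naive polar-coordinate computation gives $E(v)\leq C\int_0^\pi(|f|^2+|f'|^2)\,d\theta\leq C(\delta^2+\|f'\|_{L^2}^2)$, which controls the first term via the uniform bound $|f|\leq\delta$ but leaves the tangential contribution $\|f'\|_{L^2}^2$ not obviously dominated by $(\int|f'|)^2=\delta^2$.

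To upgrade the bound to $O(\delta^2)$ using only the $L^1$ control on $f'$, I replace the global cone by a localized extension. Partition $[0,\pi]$ into finitely many subintervals $\{I_k\}$ with $\delta_k:=\operatorname{osc}(f;I_k)$ chosen so that $\sum_k\delta_k\leq C\int_0^\pi|f'|\leq C\delta$ (this is possible by a dyadic partition of the total-variation function of $f$). On each $I_k$, choose a base value $c_k$ in the image of $f|_{I_k}$ and let $B_k\subset D^+$ be a thin half-disk attached to the arc $\{r=1,\theta\in I_k\}$ of $\partial^A$ whose Euclidean radius is proportional to the length of $I_k$. Define $v$ in $B_k$ to be a Lipschitz (e.g.~harmonic with $c_k$ boundary condition on $\partial B_k\setminus\partial^A$) extension of $f|_{I_k}$ with image in a $\delta_k$-ball around $c_k$; the standard half-disk estimate yields $E(v|_{B_k})\leq C\delta_k^2$. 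Outside $\bigcup_k B_k$, set $v$ equal to a base point in $\Gamma$ near $p_0$, joined to the $c_k$ via short curves in $\Gamma$ of length $\leq C\delta$ to guarantee $v(\partial^C)\subset\Gamma$ including at the corners $\theta=0,\pi$. Summing the local contributions,
\begin{equation*}
E(v)\leq C\sum_k\delta_k^2\leq C\Bigl(\sum_k\delta_k\Bigr)^2\leq C\delta^2,
\end{equation*}
using the elementary inequality $\sum a_k^2\leq(\sum a_k)^2$ for nonnegative $a_k$.

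The main obstacle is the gluing: ensuring that the base constants $c_k$ and the connecting curves in $\Gamma$ can be selected consistently so that $v$ is globally $W^{1,2}$, the transitions between adjacent half-disks contribute only $O(\delta_k^2)$, and the free boundary condition $v(\partial^C)\subset\Gamma$ is preserved all the way to the corners where $\partial^A$ meets $\partial^C$. This will rely on the Fermi coordinate structure to realize the transitions in the tangential $\mathbb{R}^{n-l}$-factor (which automatically lies in $\Gamma$) and on the smallness of $\delta_0$ so that the tubular neighborhood of $\Gamma$ is uniform along the relevant portion of $\Gamma$.
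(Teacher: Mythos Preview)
Your first candidate---the radial cone $v(r,\theta)=r\cdot f(\theta)$ in Fermi coordinates---is exactly the construction the paper uses. The paper simply writes
\[
E(v)\leq \int_0^1\!\!\int_0^\pi \big(|f|^2+|f'|^2\big)\,r\,dr\,d\theta \leq C\delta^2,
\]
and stops there. You are right to flag that the bound $\int_0^\pi |f'|^2\leq C\delta^2$ does not follow from $\int_0^\pi|f'|\leq\delta$; the paper does not justify this step.

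However, your proposed repair does not close the gap either. The claim that on each local half-disk $B_k$ one has $E(v|_{B_k})\leq C\delta_k^2$ from the oscillation bound $\operatorname{osc}(f;I_k)\leq\delta_k$ alone is false: the minimal energy of any $W^{1,2}$ extension of $f|_{I_k}$ is governed by the $H^{1/2}$ trace seminorm of $f|_{I_k}$, not by its oscillation, and these are not comparable. Concretely, take $\mathcal N$ flat, $\Gamma$ a straight line, and $f=(0,f_2)$ with $f_2$ a height-$\delta$ bump whose two transitions are smoothed at scale $\epsilon$. Then $\int|f'|=2\delta$ independently of $\epsilon$, but any $v:D^+\to\mathcal N$ with $v|_{\partial^A}=f$ and $v(\partial^C)\subset\Gamma$ forces $v_2=0$ on $\partial^C$, and after odd reflection the minimal Dirichlet energy is $\gtrsim\|f_2\|_{\dot H^{1/2}}^2\sim\delta^2\log(1/\epsilon)$. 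So no construction can achieve $E(v)\leq C\delta^2$ with $C$ independent of $f$; the lemma, as literally stated, cannot hold. Your partition-and-glue scheme inherits the same obstruction on each $B_k$.

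The resolution is not a cleverer extension but a stronger hypothesis: in the paper's only application of this lemma (the maps on the region $F_4$ in the proof of $W^{1,2}$-continuity), the boundary arc $f$ comes from Lemma~\ref{L:Interpolation modified band}, which supplies the $L^2$ bound $\int_0^\pi|f'|^2\leq C\delta^2$. With that additional hypothesis the cone construction (your first candidate, and the paper's proof) goes through verbatim. So the honest statement should assume $\int|f'|^2\leq\delta^2$ (which implies $\int|f'|\leq\sqrt\pi\,\delta$), after which the one-line cone estimate is complete and your more elaborate second construction is unnecessary.
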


\begin{proof}
By the gradient bound of $f$ and the fact $f(0), f(\pi)\in\Gamma$, we can assume that the image of $f$ lies in the Fermi coordinates $\{y^1, \cdots, y^n\}$ around $\Gamma$ where Lemma \ref{L:Fermi equivalent to Rn} in the appendix applies. We can further assume that locally $\Gamma$ is identified as a subset of $\{y^{k+1}=\cdots=y^n=0\}$ and $f(0)=0$. 
Define 
\[v(r,\theta)=r\cdot f(\theta)\]
where $r\cdot f(\theta)$ is the scalar multiplication under Fermi coordinates; hence $v$ is a map from $D^+\to \mc N$, and $v(r,0)$ and $ v(r,\pi)$ both lie in $\Gamma$ for any $r\in[0,1]$, thus $ v(\partial^C)\subset\Gamma$. \vskip 1.5mm

It is only left to check that the energy of $v$ is small. By Lemma \ref{L:Fermi equivalent to Rn}, we only need to check that the energy of $v$ is small as a map to the Euclidean space $\mb R^N$. In particular, Lemma \ref{L:Fermi equivalent to Rn} implies $\int_0^\pi\vert f'\vert\leq (1+\alpha)\delta$ under the Euclidean metric for some small $\alpha>0$. Also as $f(0)=0$, we get $\vert f(\theta)\vert\leq \pi(1+\alpha)\delta$ for $\theta\in[0,\pi]$. Now we have the energy estimates for $v$ as a map into standard Euclidean space $\mb R^N$: 
\begin{equation}
\begin{split}
E(v)&=\int_{0}^1\int_{0}^{\pi}(\vert \frac{\partial v}{\partial r}\vert^2+\frac{1}{r^2}\vert\frac{\partial v}{\partial \theta}\vert^2)rdrd\theta\\
&\leq \int_{0}^1\int_{0}^{\pi}(\vert f\vert^2+\vert f'\vert^2)rdrd\theta \,\leq\, C\delta^2,
\end{split}
\end{equation} 
for some universal constant $C>0$. Thus we finish the proof.
\end{proof}

Another component in our comparison map is a modified interpolation band. First we recall a lemma in \cite{CM08} to construct the interpolation of band between two circles.

\begin{lemma}
\label{L:Interpolation band}
There exists $\delta_0>0$ such that for $\delta<\delta_0$ the following statement holds. Let $f:[0,\pi]\to \mc N$, $g:[0,\pi]\to \mc N$ be two $C^0$ maps such that $\vert f-g\vert\leq\delta$, $\int\vert f'\vert^2\leq\delta'$, $\int\vert g'\vert^2\leq\delta'$. Then there exists $\rho\in (0,1/2]$ and a map $\tilde v:D^+\setminus D^+_{1-\rho}\to \mc N$ such that $\tilde v(1-\rho,\theta)=f(\theta)$, $\tilde v(1,\theta)=g(\theta)$, and $E(\tilde v)\leq C\delta^{1/2}\delta'^{1/2}$ for some constant $C>0$ depending only on $\mc N$.
\end{lemma}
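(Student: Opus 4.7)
The plan is to construct $\tilde v$ by linearly interpolating $f$ and $g$ in the ambient Euclidean space $\mb R^N$ (via the isometric embedding $\mc N\hookrightarrow\mb R^N$) and then post-composing with the nearest point projection $\Pi$ onto $\mc N$. For a parameter $\rho\in(0,1/2]$ to be chosen, let $A_\rho:=D^+\setminus D^+_{1-\rho}$, parametrized by $(r,\theta)\in[1-\rho,1]\times[0,\pi]$, and set
\[
h(r,\theta) \;:=\; \frac{1-r}{\rho}\,f(\theta) \;+\; \frac{r-(1-\rho)}{\rho}\,g(\theta),
\]
so that $h(1-\rho,\theta)=f(\theta)$ and $h(1,\theta)=g(\theta)$. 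The hypothesis $|f-g|\leq\delta\leq\delta_0$ places $h(A_\rho)$ within Euclidean distance $\delta$ of $\mc N$, so upon shrinking $\delta_0$ below the tubular neighborhood radius of $\mc N\hookrightarrow\mb R^N$, the projection $\Pi$ is smooth with uniform Lipschitz bound on a neighborhood of $h(A_\rho)$; the map $\tilde v:=\Pi\circ h\colon A_\rho\to\mc N$ then inherits the prescribed boundary values on $\{r=1-\rho\}$ and $\{r=1\}$, since $\Pi(f(\theta))=f(\theta)$ and $\Pi(g(\theta))=g(\theta)$.

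For the energy estimate I would compute $|h_r|=|f-g|/\rho\leq\delta/\rho$ and $|h_\theta|^2\leq 2(|f'|^2+|g'|^2)$ pointwise, and integrate in polar coordinates on $A_\rho$ using $\int_{1-\rho}^1 r\,dr\leq\rho$, $\int_{1-\rho}^1 r^{-1}\,dr\leq 2\rho$ for $\rho\leq 1/2$, and $\int_0^\pi(|f'|^2+|g'|^2)\,d\theta\leq 2\delta'$. This yields
\[
E(h) \;=\; \int_0^\pi\!\int_{1-\rho}^1\!\Bigl(|h_r|^2+\tfrac{|h_\theta|^2}{r^2}\Bigr)\,r\,dr\,d\theta \;\leq\; \pi\,\frac{\delta^2}{\rho}\,+\,8\rho\,\delta',
\]
and since $\Pi$ is Lipschitz on the tubular neighborhood, the chain rule gives $E(\tilde v)\leq C\bigl(\delta^2/\rho+\rho\,\delta'\bigr)$ for a constant $C$ depending only on $\mc N$.

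It remains to choose $\rho$ to balance the right hand side. The natural choice $\rho\sim\delta/\sqrt{\delta'}$, when it lies in $(0,1/2]$, produces $E(\tilde v)\leq C\delta\sqrt{\delta'}$, which is controlled by the target bound $C\delta^{1/2}(\delta')^{1/2}$ because $\delta\leq\delta_0\leq 1$. In the complementary regime one takes $\rho=1/2$ and uses the smallness hypothesis $\delta<\delta_0$ to absorb the remaining $C(\delta^2+\delta')$ into the same form, yielding the claimed estimate.

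The main technical point to verify is that the full image $h(A_\rho)$ really lies in a tubular neighborhood of $\mc N$ on which $\Pi$ admits a Lipschitz bound independent of $\delta$ and $\delta'$; this is the sole role of the smallness hypothesis $\delta<\delta_0$. Note that the lemma imposes no condition on the chord portion of $\partial A_\rho$, so no Fermi coordinate construction near $\Gamma$ is required at this stage; the free boundary condition in the broader min-max scheme is enforced separately, e.g.\ through Lemma \ref{L:small free boundary map}.
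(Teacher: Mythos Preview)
Your approach—linear interpolation in $\mb R^N$ followed by the nearest-point projection $\Pi$ onto $\mc N$, then balancing the annulus width $\rho$—is exactly what the paper intends: it simply refers to \cite[Lemma 3.11]{CM08}, where this construction is carried out on full circles, and remarks that the half-circle case needs no change. Your energy computation in the main regime is correct and in fact yields the slightly sharper bound $E(\tilde v)\le C\delta\sqrt{\delta'}$, from which the stated $C\sqrt{\delta}\sqrt{\delta'}$ follows since $\delta\le\delta_0\le 1$. Your closing remark that no free-boundary (Fermi) control on the chord is needed here is also on point; that enters only later in Lemmas \ref{L:small free boundary map} and \ref{L: lem311}.

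One genuine gap: your ``complementary regime'' argument does not go through. With $\rho=\tfrac12$ you obtain $E(\tilde v)\lesssim \delta^2+\delta'$, but this is \emph{not} in general dominated by $C\sqrt{\delta\delta'}$: take $f$ and $g$ to be distinct constants, so $\delta'=0$ yet any interpolating map has positive energy. This is really an edge case of the lemma as stated (it fails whenever $\delta'\ll\delta^3$), and the paper's reference to \cite{CM08} does not resolve it either. In the paper's sole application (the $W^{1,2}$-continuity argument), one has $\delta=\|u_\infty-u_i\|_{C^0}\to 0$ while $\delta'$ is bounded below by a fixed multiple of $\eps$, so the optimal $\rho\sim\delta/\sqrt{\delta'}$ is eventually in $(0,\tfrac12]$ and the issue never arises. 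The clean fix is to add the harmless hypothesis $\delta\le\sqrt{\delta'}/2$ (equivalently, that the balanced $\rho$ lies in $(0,\tfrac12]$), or simply to record the bound in the form $E(\tilde v)\le C\delta\sqrt{\delta'}$, which is what is actually used downstream.
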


\begin{proof}
We refer the proof to \cite[Lemma 3.11]{CM08}, where the construction works for the whole circles, but it can be generalized to half circles without any modification. Note our assumption here is even stronger.  
\end{proof}

With this construction on hand, we can construct an interpolation between two arcs on a modified band. Let us first define a modified band. A {\em modified band} $\textit{MB}_{a,b}$ is defined as 
\begin{equation}
\begin{split}
&\textit{MB}_{a,b}=\{(r,\theta): r\in[a,b],\theta\in[0,\pi]\}\setminus\\
&\left(\left\{(r,\theta):\vert (r,\theta)-(\frac{b+a}{2},0)\vert<\frac{b+a}{2}\right\}\cup \left\{(r,\theta):\vert (r,\theta)-(\frac{b+a}{2},\pi)\vert<\frac{b+a}{2}\right\} \right),
\end{split}
\end{equation}
see Figure \ref{pic1}. In the following context we may use $\phi$ to denote the angle parameter of the arcs of the removed disks.
\begin{figure}[ht]
\centering
\ifpdf
  \setlength{\unitlength}{1bp}%
  \begin{picture}(403.18, 114.40)(0,0)
  \put(0,0){\includegraphics{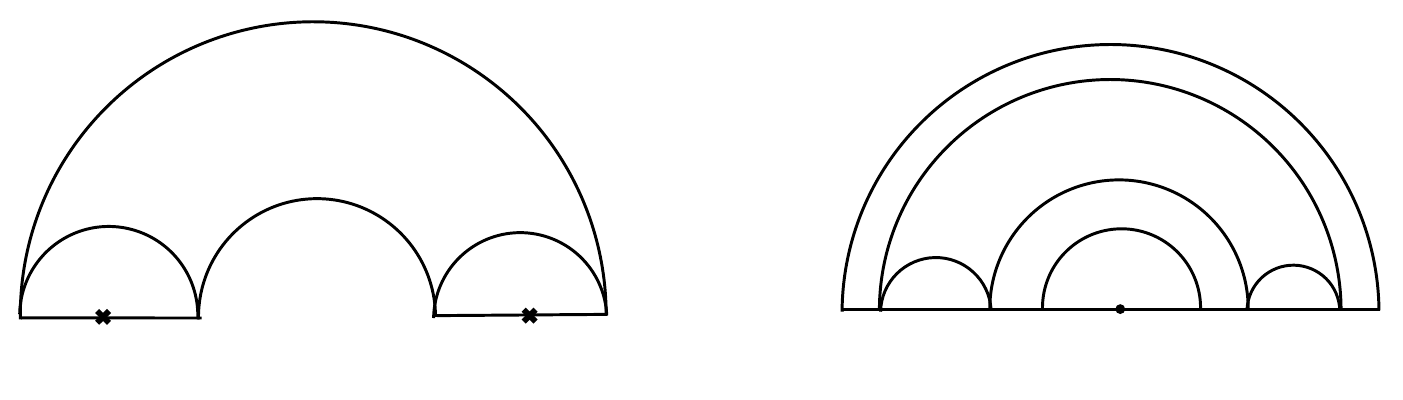}}
  \put(66.04,7.34){\fontsize{7.76}{9.31}\selectfont Modified Band}
  \put(263.02,28.15){\fontsize{6.47}{7.76}\selectfont $F_4$}
  \put(316.50,30.31){\fontsize{6.47}{7.76}\selectfont $F_1$}
  \put(341.79,40.43){\fontsize{6.47}{7.76}\selectfont $F_2$}
  \put(344.68,65.00){\fontsize{6.47}{7.76}\selectfont $F_3$}
  \put(367.08,28.51){\fontsize{6.47}{7.76}\selectfont $F_4$}
  \put(375.75,60.66){\fontsize{6.47}{7.76}\selectfont $F_5$}
  \end{picture}%
\else
  \setlength{\unitlength}{1bp}%
  \begin{picture}(403.18, 114.40)(0,0)
  \put(0,0){\includegraphics{1}}
  \put(66.04,7.34){\fontsize{7.76}{9.31}\selectfont Modified Band}
  \put(263.02,28.15){\fontsize{6.47}{7.76}\selectfont $F_4$}
  \put(316.50,30.31){\fontsize{6.47}{7.76}\selectfont $F_1$}
  \put(341.79,40.43){\fontsize{6.47}{7.76}\selectfont $F_2$}
  \put(344.68,65.00){\fontsize{6.47}{7.76}\selectfont $F_3$}
  \put(367.08,28.51){\fontsize{6.47}{7.76}\selectfont $F_4$}
  \put(375.75,60.66){\fontsize{6.47}{7.76}\selectfont $F_5$}
  \end{picture}%
\fi
\caption{\label{pic1}%
 }
\end{figure}

\begin{lemma}\label{L:Interpolation modified band}
There exists $\delta_0>0$ such that for $\delta<\delta_0$ the following statement holds. Let $f:[0,\pi]\to \mc N$, $g:[0,\pi]\to \mc N$ be two $C^0$ maps such that $\Vert f-g\Vert_{L^\infty}\leq\delta$, $\int\vert f'\vert^2\leq\delta'$, $\int\vert g'\vert^2\leq\delta'$. Then there exists $\rho\in (0,1/2]$ and a map $v: \textit{MB}_{1-\rho, 1}\to \mc N$ such that $v(1-\rho,\theta)=f(\theta)$, $v(1,\theta)=g(\theta)$, and $E(v)\leq C\delta^{1/2}\delta'^{1/2}$ for some constant $C$ depending only on $\mc N,\rho$. Moreover, for the arc of the removed half disks of the modified band (reparametrized by angle parameter $\phi$), we have 
\[\int_0^\pi\vert v'(\phi)\vert^2 ds(\phi)\leq C\delta^2 ,\]
where $ds(\phi)$ is the intrinsic arc length integral.
\end{lemma}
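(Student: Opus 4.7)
The construction of $v$ on the modified band $\textit{MB}_{1-\rho,1}$ will be done piecewise, using the five-region decomposition $F_1,\dots,F_5$ indicated in Figure \ref{pic1}. The principal new feature compared with Lemma \ref{L:Interpolation band} is the pointwise arc-energy bound $\int_0^\pi|v'(\phi)|^2\,ds(\phi)\le C\delta^2$ on each boundary arc of a removed half-disk; this forces $\rho$ to be bounded below, so that the arc length of each half-disk's boundary is at least a fixed positive constant $L_0>0$.

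My plan is to invoke the construction of Lemma \ref{L:Interpolation band} on the central ``bulk'' region of the modified band (say $F_3$, the portion lying away from the two removed half-disks). Concretely, take a radial convex combination of $f$ and $g$ in $\mathbf{R}^N$ on a sub-strip of appropriately chosen width adjacent to the outer arc $r=1$, then project back to $\mathcal N$ via the nearest-point projection $\Pi_{\mathcal N}$. The hypotheses $\Vert f-g\Vert_{L^\infty}\le\delta$ and $\int|f'|^2,\int|g'|^2\le\delta'$, combined with an optimal choice of sub-strip width $\asymp\delta/\sqrt{\delta'}$, yield the global energy bound $C\delta^{1/2}(\delta')^{1/2}$ on this piece, exactly as in Colding-Minicozzi.

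For the arc-energy bound, on each boundary arc of a removed half-disk I would prescribe $v$ to be a constant-speed geodesic in $\mathcal N$ joining the prescribed endpoint values ($f(0)$ and $g(0)$ at one side, $f(\pi)$ and $g(\pi)$ at the other). Since the endpoints differ by at most $\delta$ in $\mathbf{R}^N$ (and hence in the intrinsic distance on $\mathcal N$, because $\mathcal N$ is closed), and the arc length $L\ge L_0$ is a positive constant, this geodesic satisfies $|v'(\phi)|\le C\delta/L$ along the arc, which gives
\[
\int_0^\pi|v'(\phi)|^2\,ds(\phi)\;\le\;\frac{C\delta^2}{L}\;\le\;C'\delta^2,
\]
as required. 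The remaining regions $F_1,F_2,F_4,F_5$ correspond to strips and transition zones adjacent to the two removed half-disks. In the strips immediately adjacent to the inner and outer arcs ($F_1$ and $F_5$), set $v\equiv f$ and $v\equiv g$ respectively, constant in the radial direction; in the transition regions ($F_2$ and $F_4$), glue the bulk interpolation, the constant-radial extensions and the arc geodesics by cut-off functions in Fermi coordinates around the image curves $f([0,\pi])$ and $g([0,\pi])$, as in the proof of Lemma \ref{L:small free boundary map}. The $L^\infty$-closeness $\Vert f-g\Vert_{L^\infty}\le\delta$ ensures that the additional gluing energy from these transitions is absorbed into the $C\delta^{1/2}(\delta')^{1/2}$ bound.

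The main obstacle is the simultaneous satisfaction of the global energy estimate $\leq C\delta^{1/2}(\delta')^{1/2}$ and the pointwise arc-energy estimate $\leq C\delta^2$: the interpolation of Lemma \ref{L:Interpolation band} prefers $\rho$ small, of order $\delta/\sqrt{\delta'}$, while the arc bound demands $\rho$ bounded below. The resolution is the piecewise construction above, where the radial interpolation is confined to a sub-strip of width $\asymp\delta/\sqrt{\delta'}$ inside a modified band whose large-scale width $\rho$ is chosen as a fixed positive constant, with the remainder of the bulk filled by constant-radial extensions of $f$ and $g$ and with the half-disk arcs carrying the geodesic construction.
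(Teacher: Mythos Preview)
Your proposal rests on a misreading of Figure \ref{pic1}. The five regions $F_1,\dots,F_5$ shown there are \emph{not} a decomposition of the modified band $\textit{MB}_{1-\rho,1}$; they are a decomposition of the full half-disk $D^+$ used in the subsequent proof of $W^{1,2}$-continuity of the harmonic replacement. In that later construction, the modified band is precisely the single piece $F_3$, the removed half-disks are $F_4$, and $F_1,F_2,F_5$ are entirely disjoint from $\textit{MB}_{1-\rho,1}$. So your five-piece construction, with geodesics on the arcs and Fermi-coordinate gluing in ``transition zones'', is building machinery for a geometry that is not present in this lemma.

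The paper's actual argument is much shorter and avoids the ``obstacle'' you identify. One simply takes the interpolation $\tilde v$ on the ordinary half-annulus $D^+_1\setminus D^+_{1-\rho}$ from Lemma \ref{L:Interpolation band} (same $\rho$), and then transports it to $\textit{MB}_{1-\rho,1}$ via a change of variables $h:D^+_1\setminus D^+_{1-\rho}\to \textit{MB}_{1-\rho,1}$ which, on each circle $\{r=s\}$, linearly rescales the angle $\theta\in[0,\pi]$ onto the arc $\textit{MB}_{1-\rho,1}\cap\{r=s\}$. This $h$ is bi-Lipschitz with constants bounded independently of $\rho$, so $v=\tilde v\circ h^{-1}$ inherits the energy bound $E(v)\le C\,\delta^{1/2}(\delta')^{1/2}$ directly. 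Under $h$, each boundary arc of a removed half-disk is the image of the segment $\{\theta=0\}$ (resp.\ $\{\theta=\pi\}$), on which $\tilde v$ is just the radial interpolation between the two points $f(0)$ and $g(0)$ (resp.\ $f(\pi)$ and $g(\pi)$), a distance at most $\delta$ apart; this immediately gives the arc-energy bound $\int_0^\pi|v'(\phi)|^2\,ds(\phi)\le C\delta^2$ with $C$ depending on $\rho$, as the statement allows. There is no competition between the two estimates because both are read off the same $\tilde v$ after a bi-Lipschitz reparametrization.
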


\begin{proof}
We construct a change of variables. Let us construct a map $h:D^+_{1}\setminus D^+_{1-\rho}\to \textit{MB}_{1-\rho,1}$ as follows: given $s\in[1-\rho, \rho]$, we consider the arc $\textit{MB}_{1-\rho,1}\cap \{r\equiv s\}$, and denote the length of this arc by $l_s$; for any fixed $(r,\theta)\in D^+_{1}\setminus D^+_{1-\rho}$, $h$ maps it to $(r,\theta')$, where the ratios satisfy
\[ [\vert \theta-\pi/2\vert : \pi/2]= [\vert \theta'-\pi/2\vert: l_r].   \]
Then the Jacobian and the gradient of this change of variables is bounded by some universal constant.  Let $\tilde v$ be given in Lemma \ref{L:Interpolation band}, then $v=\tilde v\circ (h^{-1})$ is the desired interpolation map. The only thing we need to check is the last estimate. In particular, the energy of $v$ on the boundary of the removed half disks is bounded by that of $\tilde v$ on $\theta=0$ times a universal constant, and the energy of $\tilde v$ therein is bounded .
\end{proof}

Now we start proving that the harmonic replacement $u\to w$ is continuous as a map from $C^0\cap W^{1,2}$ to $W^{1,2}$ for $u$ with small energy.

\begin{proof}[Proof of $W^{1,2}$-continuity]
We prove by contradiction. Suppose the $W^{1,2}$-continuity of the harmonic replacement process fails, then we can find a sequence of maps $\{u_i:D^+\to \mc N\}$ converging to $u_\infty$ in $C^0(\overline{D^+}, \cN) \cap W^{1,2}(D^+, \cN)$, but their free boundary harmonic replacements $\{w_i\}$ does not converge to the corresponding replacement $w_\infty$ (of $u_\infty$) in $W^{1,2}$. Note that a subsequence of $w_i$ must converge weakly to $w_\infty$, so by lower-semicontinuity of energy $E(w_\infty)\leq\liminf E(w_i)$.
\vskip 1.5mm

By the convexity of energy for free boundary harmonic maps (Theorem \ref{T:convexity2}), the fact that $w_i$ does not converge to $w_\infty$ in $W^{1,2}$ implies that there exists $\eps>0$ and a subsequence (still denoted by $w_i$) such that $ E(w_i)-E(w_\infty) \geq \eps$.\vskip 1.5mm

We divide $D^+$ into five different pieces $D^+=\cup_{j=1}^5 F_j$:
\begin{enumerate}
\item $F_1=\{(r,\theta):r\in[0, \eta\lambda(1-\rho)]\}$,
\item $F_2=\{(r,\theta):r\in[\eta\lambda(1-\rho),\lambda(1-\rho)]\}$,
\item $F_3=\textit{MB}_{\lambda(1-\rho),\lambda}$,
\item $F_4=\{y\in D^+:\vert y-(\lambda-\lambda\rho/2,0)\vert\leq \lambda\rho/2\}\cup\{y\in D^+:\vert y-(\lambda-\lambda\rho/2,\pi)\vert\leq \lambda\rho/2\}$,
\item $F_5=\{(r,\theta):r\in[\lambda,1]\}$,
\end{enumerate} 
see Figure \ref{pic1}. Here $\eta,\rho,\lambda\in (0 ,1)$ will be determined later. Note that the $F_j$'s have some common boundaries, and we will see in the construction below that the maps on the common boundaries share the same value. Let us choose a radius $r_0>1/2$ such that $\int_{D^+\setminus D^+_{1-r_0}}\vert\nabla u_\infty\vert^2\leq \eps/16$. Then given $i$ large such that $\Vert u_i-u_\infty\Vert^2_{W^{1,2}}\leq\eps/16$, we have $\int_{D^+\setminus D^+_{1-r_0}}\vert\nabla u_\infty\vert^2\leq \eps/8$. Using the co-area formula, we can pick some radius $r_1\in (r_0, 1)$ such that $\int_{r=r_1}\vert\partial_\theta u_\infty\vert\leq C(r_0)\eps$ and $\int_{r=r_1}\vert\partial_\theta u_i\vert\leq C(r_0)\eps$.\vskip 0.5em

\noindent{\bf Construction of $v_i$}: 
\begin{itemize}
\item On $F_1$, we define $v$ to be $w_\infty$ after rescaling to a suitable scale to fit $F_1$;

\item On $F_2$, we define $v$ to be the part of $u_\infty$ defined on $D^+_{1}\setminus D^+_{1-r_1}$ after inversion (in polar coordinate, we change $r$ to be $2-r_1-r$) and rescaling. Note that the energy $v$ on $F_2$ is bounded by the energy of $u_\infty$ on $D^+_{1}\setminus D^+_{1-r_1}$ times a universal constant $4$;

\item On $F_3$, we define $v$ to be the interpolation between $u_\infty\vert_{\{r=r_1\}}$ and $u_i\vert_{\{r=r_1\}}$ from Lemma \ref{L:Interpolation modified band} on the modified band, then rescaling to fit $F_3$;

\item On $F_4$, we define $v$ to be the map constructed in Lemma \ref{L:small free boundary map}, which provides two maps with small energy;

\item On $F_5$, we define $v$ to be the the part of $u_i$ defined on $D^+_{1}\setminus D^+_{1-r_1}$.
\end{itemize}

Here $\rho$ comes from Lemma \ref{L:Interpolation modified band}, $\eta=1-r_1$ and $\lambda=1-r_1$. These parameters are chosen in order to guarantee that in the definition of $v$'s the rescaling are possible. i.e. the ratio between inner radius and outer radius of the bands or modified bands do not change.
\vskip 1.5mm

Now we claim some properties of $v$ we constructed. First, on $\partial^A$ the fixed boundary, $v_i=u_i$; on $\partial^C$ the free boundary, $v_i$ always has image in $\Gamma$. Second, $v_i$ is continuous. So it is an eligible comparison map (with $w_\infty$).\vskip 1.5mm

Finally we estimate the energy of $v_i$. We will repeatedly use (without mentioning) the fact that the harmonic energy is invariant under conformal reparametrization on the domain. Note that the energy of $v_i$ on $F_1$ equals $E(w_\infty)$; on $F_2$ and $F_5$ the total energy of $v_i$ is bounded by $\eps/3$; on $F_3$ by Lemma \ref{L:Interpolation modified band} the energy of $v_i$ is bounded by a constant times $\Vert u_\infty-u_i\Vert_{C^0}^{1/2}$, and on $F_4$ by Lemma \ref{L:small free boundary map} the energy of $v_i$ is bounded by a constant times $\Vert u_\infty-u_i\Vert_{C^0}^2$. Combining all the pieces gives $E(v_i)\leq E(w_\infty)+\eps/3+C\Vert u_\infty-u_i\Vert_{C^0}^{1/2}$. Since $u_i$ converges to $u_\infty$ in $C^0$, when $i$ large enough $C\Vert u_\infty-u_i\Vert_{C^0}^{1/2}\leq\eps/3$. In conclusion, when $i$ is large enough, $E(v_i)\leq E(w_\infty)+2\eps/3\leq\liminf E(w_n)-\eps/3< E(w_i)$.\vskip 1.5mm

However $v_i$ shares the same fixed boundary with $w_i$. So we construct a map with same fixed boundary with $w_i$ along $\partial^A$, and $v_i(\partial^C)\subset\Gamma$, but has energy strictly less than that of $w_i$; this is a contradiction to the energy minimizing property of harmonic replacement (Theorem \ref{T:convexity2}). Therefore the harmonic replacement is continuous in $W^{1,2}$.
\end{proof}

\subsubsection{$C^0$-continuity}
\begin{proof}[Proof of $C^0$-continuity]
Now we are ready to prove the $C^0$-continuity of the harmonic replacement process. We argue by contradiction. If the harmonic replacement process is not $C^0$, we can find a sequence of maps $u_i:D^+\to \mc N$ converge to $u_\infty$ in $C^0(\overline{D^+})\cap W^{1,2}(D^+)$ but the corresponding free boundary harmonic replacements $\Vert w_i-w_\infty\Vert_{C^0(\overline{D^+})}\geq\eps>0$.\vskip 1.5mm

By the gradient estimate of free boundary harmonic maps (Theorem \ref{e-reg}), we know that $w_i$'s are equicontinuous on any compact subset of $D^+$ away from the fixed boundary $\partial^A$. Moreover, by Qing's result in \cite{Qing93}, we know that $w_i$'s are equicontinuous on any compact subset of $\overline{D^+}$ besides the two corner points $(1, 0)$ and $(1, \pi)$. Finally, since we have the $W^{1,2}$-continuity of the harmonic replacement process, $\Vert w_i-w_{\infty}\Vert_{W^{1,2}(D^+)}\to 0$ as $i\to \infty$. Then by checking the proof of Theorem \ref{T: Continuity of free boundary harmonic map}, we can see that $w_i$'s are also equicontinuous at the corner points. Thus by the Arzela-Ascoli theorem, up to a subsequence $w_i$ must converge to some $w_\infty'$ in $C^0(\overline {D^+})$. The $W^{1, 2}$-convergence of $w_i$ then implies $w_\infty=w_\infty'$, but this contradicts $\Vert w_i-w_\infty\Vert_{C^0(\overline {D^+})}\geq\eps>0$. So we finish the proof.
\end{proof}

\subsection{Uniform continuity of energy improvement}

In this part, we prove two inequalities regarding energy improvements for two sets of free boundary harmonic replacements on generalized balls (see Definition \ref{D:generalized balls}). These inequalities play a key role in the proof of Theorem \ref{T:Tightening theorem}. Similar results were proved by Colding-Minicozzi \cite[Section 3]{CM08} for fixed boundary harmonic replacements. For first reading, we suggest the readers coming back to the proof after reading Section \ref{SS:tightening process}. \vskip 1.5mm

Let $\eps_0$ be as in Theorem \ref{T:convexity1} and Theorem \ref{T:convexity2}. We adopt the following notations: given a map $u \in C^0(\overline{D}, \cN) \cap W^{1,2}(D, \cN)$ with $u(\partial D)\subset \Gamma$ and a finite collection $\mc B$ of disjoint generalized closed balls in $D$ so that the energy of $u$ on $\cup_{\mc B}B$ is at most $\epsilon_0/3$, let $H(u,\mc B): D\to \mc N$ denote the map that coincides with $u$ on $D\setminus\cup_{\mc B}B$ and is equal to the free boundary harmonic replacements of $u$ on $\cup_{\mc B}B$; (see Remark \ref{Re:free boundary harmonic replacement}, 
where the existence is guaranteed by Theorem \ref{T:existence and regularity of free boundary harmonic replacement}). Note we also have $H(u, \mc B)(\partial D)\subset \Gamma$, and we will call $H(u,\mc B)$ the {\em free boundary harmonic replacement of $u$ on $\mc B$}. 
Given two such disjoint collections $\mc B_1, \mc B_2$, we use $H(u,\mc B_1,\mc B_2)$ to denote $H(H(u,\mc B_1),\mc B_2)$. Recall that for $\alpha\in (0,1]$, $\alpha \mc B$ will denote the collection of concentric balls with radii that are shrunk by the factor $\alpha$ in the sense of Definition \ref{D:generalized balls}.\vskip 1.5mm

First we have the following interpolation formula for free boundary replacements, c.f. \cite[Lemma 3.11]{CM08}. Denote $\kappa$ as the radius such that the Fermi coordinates system exists in a tubular neighborhood of radius $\kappa$ surrounding $\Gamma$ where Lemma \ref{L:Fermi equivalent to Rn} in the appendix applies.

\begin{lemma}
\label{L: lem311}
There exists $\tau>0$ so that given $f, g:\partial_R^A\to \cN$ two $C^0\cap W^{1,2}$ maps with $f(0)$, $f(\pi)$, $g(0)$, $g(\pi)\in\Gamma$, if $f$ and $g$ agree at one point on $\partial_R^A$ and satisfy
\[R\int_{\partial_R^A}\vert f'-g'\vert^2\leq\tau^2,\]
and
\[\vert f(\theta)-f(0)\vert\leq\kappa/3\mbox{ for all $0\leq \theta\leq\pi$},\]
then there exist $\rho\in(0,R/2]$ and a $C^0\cap W^{1,2}$-map $w:D^+_R\setminus D^+_{R-\rho}\to M$ so that
\[w(R-\rho,\theta)=f(\theta)\,\mbox{ and }\, w(R,\theta)=g(\theta),\]
and the image of $w\vert_{\partial_R^C\setminus \partial_{R-\rho}^C}$ lies in $\Gamma$, and 
the following estimate holds
\[\int_{D^+_R\setminus D^+_{R-\rho}}\vert\nabla w\vert^2\leq C(R\int_{\partial^A_R}\vert f'\vert^2+\vert g'\vert^2)^{1/2}(R\int_{\partial^A_R}\vert f'-g'\vert^2)^{1/2},\]
where $C>0$ is a universal constant depending only on $\kappa, \tau$.
\end{lemma}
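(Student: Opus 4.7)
The strategy is to adapt the interpolation of \cite[Lemma 3.11]{CM08} -- which handled closed curves into $\mathcal{N}$ -- by carrying out the interpolation in the Fermi coordinates around $\Gamma$, exploiting the fact that $\Gamma$ is realized as a flat linear subspace in those coordinates. Since any convex combination of two points of $\Gamma$ then lies in $\Gamma$, this is exactly what is needed to enforce the free boundary constraint on the chord portion $\partial^C_R\setminus\partial^C_{R-\rho}$.

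I would first verify that both $f$ and $g$ take values inside the tubular neighborhood of $\Gamma$ on which the Fermi coordinates $(y^1,\ldots,y^n)$ (with $\Gamma=\{y^{k+1}=\cdots=y^n=0\}$) are defined. The hypothesis $|f(\theta)-f(0)|\leq \kappa/3$ together with $f(0)\in\Gamma$ places the image of $f$ inside that neighborhood. For $g$, the agreement of $f$ and $g$ at one point combined with the one-dimensional Sobolev embedding $W^{1,2}\hookrightarrow C^0$ gives
\[\sup_{\theta\in[0,\pi]}|f(\theta)-g(\theta)|\,\leq\,C\,\bigl(R\int_{\partial^A_R}|f'-g'|^2\bigr)^{1/2}\leq C\tau,\]
so choosing $\tau$ small ensures $g$ also stays in the Fermi chart.

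Next I would define, in Fermi coordinates,
\[w(s,\theta)\;=\;(1-\phi(s))\,f(\theta)\,+\,\phi(s)\,g(\theta),\qquad s\in[R-\rho,R],\ \theta\in[0,\pi],\]
where $\phi$ is the affine cutoff with $\phi(R-\rho)=0$ and $\phi(R)=1$. The prescribed matching on $\partial^A_{R-\rho}$ and $\partial^A_R$ is immediate. At the chord points $\theta\in\{0,\pi\}$, the last $n-k$ Fermi components of both $f(\theta)$ and $g(\theta)$ vanish, hence the same holds for $w(s,\theta)$, giving $w(\partial^C_R\setminus\partial^C_{R-\rho})\subset\Gamma$. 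By the previous step $w$ stays inside the Fermi chart, so it is a well-defined map into $\mathcal{N}$. By Lemma \ref{L:Fermi equivalent to Rn} the Fermi metric is uniformly comparable to the Euclidean one on this neighborhood, so it suffices to bound the Euclidean Dirichlet energy. A direct calculation in polar coordinates using $\partial_s w=\phi'(g-f)$, $\partial_\theta w=(1-\phi)f'+\phi g'$, together with the Poincar\'e inequality $\int_0^\pi |f-g|^2\,d\theta\leq C\int_0^\pi |(f-g)'|^2\,d\theta$ (which relies on the one-point agreement) yields
\[\int_{D^+_R\setminus D^+_{R-\rho}}|\nabla w|^2\,\leq\,C\Bigl(\frac{\rho}{R}\,a+\frac{R}{\rho}\,b\Bigr),\quad a:=R\int_{\partial^A_R}(|f'|^2+|g'|^2),\quad b:=R\int_{\partial^A_R}|f'-g'|^2.\]
Setting $\rho:=\min\{R/2,\;R\sqrt{b/a}\,\}$ balances the two terms and produces the asserted bound $C\sqrt{ab}$; the smallness threshold $\tau$ is fixed so that this $\rho$ automatically lies in $(0,R/2]$.

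The main obstacle is enforcing the free boundary condition along the two radial chord segments, which would fail under a naive interpolation in ambient $\mathbf{R}^N$ followed by nearest-point projection to $\mathcal{N}$. Carrying out the convex combination in Fermi coordinates rather than in $\mathbf{R}^N$ resolves this at the modest cost of first confining both curves to the Fermi chart; beyond that, the argument reduces to the same Poincar\'e-plus-optimize-$\rho$ scheme used in \cite[Lemma 3.11]{CM08}.
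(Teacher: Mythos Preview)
Your proposal is correct and follows essentially the same approach as the paper: interpolate linearly in Fermi coordinates (so that the chord boundary stays in $\Gamma=\{y^{k+1}=\cdots=y^n=0\}$), use Lemma \ref{L:Fermi equivalent to Rn} to pass between metrics, apply the one-point-agreement Poincar\'e/Wirtinger inequality to control $\int|f-g|^2$ by $\int|f'-g'|^2$, and optimize $\rho$. The paper reduces to $R=1$ by scaling and chooses $\rho^2=(\int|f'-g'|^2)^{1/2}/(\int|f'|^2+|g'|^2)^{1/2}$, which is equivalent to your $\rho=\min\{R/2,\,R\sqrt{b/a}\}$ up to constants.
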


\begin{rem}
This lemma generalizes an interpolation formula by Colding-Minicozzi for maps defined on circles \cite[Lemma 3.11]{CM08} to maps defined on half circles, and the constructed interpolating map has the chord boundary $\partial_R^C\setminus \partial_{R-\rho}^C$ lying on $\Gamma$. 
In \cite[Lemma 3.11]{CM08}, they first took the linear interpolation of $f,g$ in $\mb R^N$, and then projected it back to the ambient manifold $\cN$. However this method doesn't work in the free boundary case because the projection $\mb R^N\to \cN$ may not map the boundary $\partial_R^C$ to the constraint submanifold $\Gamma$.

Here we use Fermi coordinate system to construct the desired interpolation between $f$ and $g$. In the proof we have to work in two different coordinate systems, and Lemma \ref{L:Fermi equivalent to Rn} will be used to show the equivalence.
\end{rem}

\begin{proof}[Proof of Lemma \ref{L: lem311}]
Note that there is one point $0\leq\theta\leq\pi$ such that $f(\theta)=g(\theta)$. Choose $\tau=\kappa/3\pi$. Then by the assumptions and taking integration, we get $\vert f(\theta)-g(\theta)\vert\leq \kappa/2$ for all $\theta\in[0, \pi]$, so that $\vert g(\theta)-g(0)\vert\leq\kappa$ for all $\theta\in[0, \pi]$.

As a result, we can assume the images of $f,g$ all lie in a convex neighborhood $U_\kappa$ of $f(0)$, and in $U$ we can pick the Fermi coordinate systems $\{y^1, \cdots, y^n\}$ as in Lemma \ref{L:Fermi equivalent to Rn}, where $\Gamma$ is a subset of $\{y^{k+1}=\cdots=y^n=0\}$. Recall in Lemma \ref{L:Fermi equivalent to Rn}, $g^1, g^2, g^3$ denote respectively the metric of $\mc N$, the flat metric in $\{y^1,\cdots, y^n\}$ and the flat metric of $\mb R^N$. We will use $\vert\cdot\vert$ to denote the norm under the metric $g^3$.

Since the statement is scaling invariant, it suffices to prove the case $R=1$. For $\rho\leq 1/2$ to be determine, define $w:D^+\setminus D^+_{1-\rho}\to\mb{R}^N$ by
\[w(r,\theta)=f(\theta)+\left(\frac{r+\rho-1}{\rho}\right)(g(\theta)-f(\theta))\,.\]
Note on $\partial_1^C\setminus \partial_{1-\rho}^C$, since $f(0),f(\pi),g(0),g(\pi)\in\Gamma$, in the Fermi coordinate chart $f(0),f(\pi),g(0),g(\pi)$ lie in the plane $\{y^{k+1}=\cdots=y^n=0\}$, then as a result, the interpolation function $w$ also has the image $w\vert_{\partial_1\setminus \partial_{1-\rho}}$ lying in the same plane, and it turns out that the image of $w\vert_{\partial_1\setminus \partial_{1-\rho}}$ lies in $\Gamma$. 

The energy density in this coordinate system is
\[e(w)=\sum_{k=1}^2\sum_{i,j=1}^n g^1_{ij}\frac{\partial w^i}{\partial x_k}\frac{\partial w^j}{\partial y_k}\leq(1+\alpha)\Vert\nabla w\Vert_{g^2}^2\,.\]

Now we proceed to prove the estimate of $\Vert\nabla w\Vert_{g^2}^2$. First we need a Wirtinger type inequality. Suppose $h(\cdot)$ is a function on $\partial^A$ and $h(s)=0$ for some $s\in[0,\pi]$, then for any $t\in[0,\pi]$, we have
\[\vert h(t)\vert=\vert h(t)-h(s)\vert\leq\int_{s}^t\vert h'(x)\vert dx\leq\int_{\partial^A_1}\vert h'\vert\,.\]
Integrating the square of both side for $t\in[0,\pi]$ we get
\[\int_{\partial^A_1}\vert h(t)\vert^2\leq \pi (\int_{\partial^A_1}\vert h'\vert)^2\leq \pi^2\int_{\partial^A_1}\vert h'\vert^2\,.\]
Then we get
\[\int_{\partial^A_1}\vert f-g\vert^2\leq \pi^2 \int_{\partial^A_1}\vert f'-g'\vert^2\,.\]

Thus
\begin{equation*}
\begin{split}
E(D^+\setminus D^+_{1-\rho})&\leq\int_{D^+\setminus D^+_{1-\rho}}(1+\alpha)\Vert\nabla w\Vert_{g^2}^2\\
&\leq (1+\alpha)\int_{1-\rho}^1\left(\frac{1}{\rho^2}\int_{0}^\pi\Vert f-g\Vert_{g^2}^2(\theta)d\theta+\frac{1}{r^2}\int_0^\pi(\Vert f'\Vert_{g^2}^2+\Vert g'\Vert_{g^2}^2)(\theta)d\theta\right)rdr\\
&\leq (1+\alpha)^2\int_{1-\rho}^1\left(\frac{1}{\rho^2}\int_{0}^\pi\vert f-g\vert^2(\theta)d\theta+\frac{1}{r^2}\int_0^\pi(\vert f'\vert^2+\vert g'\vert^2)(\theta)d\theta\right)rdr\\
&\leq (1+\alpha)^2\frac{8}{\rho}\int_0^\pi\vert f'-g'\vert^2(\theta)d\theta+2\rho\int_0^\pi(\vert f'\vert^2+\vert g'\vert^2)(\theta)d\theta\\
&\leq8\frac{(1+\alpha)^2}{(1-\alpha)}(\int_{\partial^A_1}\vert f'\vert^2+\vert g'\vert^2)^{1/2}(\int_{\partial^A_1}\vert f'-g'\vert^2)^{1/2}\,,
\end{split}
\end{equation*}
once we pick $\rho^2=(\int_{\partial^A_1}\vert f'-g'\vert^2)^{1/2}/(\int_{\partial^A_1}\vert f'\vert^2+\vert g'\vert^2)^{1/2}$.
\end{proof}

Now we are ready to prove the energy improvement inequalities for free boundary harmonic replacements. Similar results were first obtained by Colding-Minicozzi for fixed boundary harmonic replacements \cite[Lemma 3.8]{CM08}.

\begin{lemma}
\label{lem3.8}
There is a constant $k>0$ so that if $u:D\to \cN$ is in $C^0\cap W^{1,2}$ and $\mc{B}_1,\mc{B}_2$ are two finite collections of disjoint closed generalized balls in $D$ so that the energy of $u$ on each $\cup_{\mc B_i}B$ is at most $\epsilon_0/3$, then
\begin{equation}
    E(u)-E(H(u,\mc B_1,\mc B_2))\geq k(E(u)-E(H(u,\frac{1}{2}\mc B_2)))^2.
\end{equation}
Furthermore, for any $\mu\in[1/8,1/2]$ we have
\begin{equation}
\label{E:lem3.8 second formula}
    \frac{E(u)-E(H(e,\mc B_1))^{1/2}}{k}+E(u)-E(H(u,2\mu\mc B_2))\geq E(H(u,\mc B_1))-E(H(u,\mc B_1,\mu\mc B_2)).
\end{equation}
\end{lemma}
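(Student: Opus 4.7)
The proof parallels the argument of Colding--Minicozzi for \cite[Lemma 3.8]{CM08}, the new input being that boundary balls are now permitted. Throughout, since the collections $\mc B_1,\mc B_2$ are disjoint and the energy functional decomposes as a sum over their pieces and the complementary region, it suffices to establish a single-ball estimate with a squared improvement and then sum: Cauchy--Schwarz promotes a per-ball squared bound into the stated collective squared bound. For each classical (interior) ball the original argument of \cite{CM08} applies verbatim via Theorem \ref{T:convexity1}. The new work occurs for boundary balls, which after the conformal identification $\Pi_B$ we may assume to be $D^+$; the relevant convexity is then Theorem \ref{T:convexity2}, and the relevant interpolation is Lemma \ref{L: lem311}.

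\textbf{Step 1: Trace selection.} Fix a boundary ball $B\in\mc B_2$ identified with $D^+$. A coarea/Fubini argument on the annular shell $D^+\setminus D^+_{1/2}$ (respectively $2\mu B\setminus \mu B$ for the second inequality) produces a radius $r$ in that range for which the trace integrals
\[
  r\int_{\partial^A_r}|\nabla u-\nabla H(u,\mc B_1)|^2\quad\text{and}\quad r\int_{\partial^A_r}\bigl(|\nabla u|^2+|\nabla H(u,\mc B_1)|^2\bigr)
\]
are controlled (up to constants) by the corresponding bulk $L^2$-integrals over the shell, which are in turn controlled by the energy drops and the small-energy hypothesis $\epsilon_0/3$. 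The two endpoints of $\partial^A_r$ lie on the original $\partial D$, so that both $u$ and its free boundary harmonic replacements send these corners into $\Gamma$; combined with the $\epsilon$-regularity gradient estimate of Theorem \ref{e-reg} applied to $H(u,\mc B_1)$ and the $C^0\cap W^{1,2}$ input, this gives the small-oscillation condition $|f(\theta)-f(0)|\le \kappa/3$ required by Lemma \ref{L: lem311}.

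\textbf{Step 2: Comparison and convexity.} On the good radius $r$, use Lemma \ref{L: lem311} to build a comparison map $\tilde v$ that equals $H(u,\tfrac12\mc B_2)$ on $D^+_{r-\rho}$, equals $H(u,\mc B_1)$ on $D^+\setminus D^+_r$, and interpolates on the thin shell $D^+_r\setminus D^+_{r-\rho}$. Because the Fermi-coordinate interpolation of Lemma \ref{L: lem311} preserves the free boundary constraint, $\tilde v(\partial^C)\subset \Gamma$ and $\tilde v$ is an admissible competitor against $H(u,\mc B_1,\mc B_2)$ on $B$. Applying Theorem \ref{T:convexity2} to the harmonic piece $H(u,\mc B_1,\mc B_2)$ gives
\[
  \tfrac12\!\int_{B}\!|\nabla \tilde v-\nabla H(u,\mc B_1,\mc B_2)|^2\ \le\ E(\tilde v)-E(H(u,\mc B_1,\mc B_2)),
\]
while the interpolation estimate in Lemma \ref{L: lem311} factors as a product of a ``trace energy'' term (bounded by the Step~1 quantities) and a ``trace difference'' term (bounded by the energy drop after a single replacement on $\tfrac12\mc B_2$). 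Rearranging and squaring yields
\[
  E(u)-E(H(u,\mc B_1,\mc B_2))\ \ge\ c\,\bigl(E(u)-E(H(u,\tfrac12\mc B_2))\bigr)^2
\]
at the single-ball level; summing and applying Cauchy--Schwarz produces the first inequality. Inequality \eqref{E:lem3.8 second formula} is obtained by the same mechanism but with the shell $2\mu B\setminus \mu B$ in place of $B\setminus \tfrac12 B$, distributing the Cauchy--Schwarz in Lemma \ref{L: lem311} differently so that one trace factor is not squared, which accounts for the extra $1/2$-power on $E(u)-E(H(u,\mc B_1))$.

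\textbf{Main obstacle.} The principal technical difficulty is reconciling the coarea selection of a good radius with the hypotheses of Lemma \ref{L: lem311}: we need simultaneously a Hardy/trace-type control of the bulk gradient difference, the endpoint constraint $f(0),f(\pi)\in\Gamma$, and the Fermi-chart smallness $|f(\theta)-f(0)|\le \kappa/3$. The endpoint constraint is automatic from the free boundary conditions on $u$ and on $H(u,\mc B_1)$ where $\partial B$ meets $\partial D$, but the smallness condition requires both the small-energy threshold and a uniform modulus of continuity supplied by the $\epsilon$-regularity Theorem \ref{e-reg} on the boundary-ball side, and by the input $u\in C^0\cap W^{1,2}$ on the raw-data side. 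A second subtlety is that the Fubini argument must be run on one annulus at a time to avoid constant blow-up, and the bookkeeping required to extract a single radius on which all three traces ($u$, $H(u,\mc B_1)$ and the free boundary replacement on $\tfrac12\mc B_2$) are simultaneously well behaved is what forces the factor $k$ in the statement to depend on $(\mc N,\Gamma)$ through Theorem \ref{T:convexity2} and Lemma \ref{L: lem311}.
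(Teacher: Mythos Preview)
Your outline has the right high-level picture (reduce to CM08 plus a new boundary-ball argument via Lemma~\ref{L: lem311} and Theorem~\ref{T:convexity2}), but there is a genuine structural gap and a misidentification of the comparison map.

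\textbf{The missing dichotomy.} You write that ``the collections $\mc B_1,\mc B_2$ are disjoint'' and reduce to a per-ball estimate. But the hypothesis only says the balls \emph{within} each $\mc B_i$ are disjoint; balls of $\mc B_1$ may well overlap balls of $\mc B_2$. The paper (following \cite{CM08}) therefore splits $\mc B_2=\mc B_{2,+}\cup\mc B_{2,-}$, where $\mc B_{2,+}$ consists of those $B_j^2$ with $\tfrac12 B_j^2\subset B_\alpha^1$ for some $\alpha$. In Case~1 ($\mc B_{2,+}$ dominates) one uses nesting and energy monotonicity directly; in Case~2 ($\mc B_{2,-}$ dominates) the interpolation argument is run. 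This split is not cosmetic: Lemma~\ref{L: lem311} requires $f$ and $g$ to \emph{agree at one point} on $\partial^A_r$, and in the proof $f=u|_{\partial^A_r}$, $g=H(u,\mc B_1)|_{\partial^A_r}$. Agreement holds precisely because for $B_j^2\in\mc B_{2,-}$ the arc $\partial^A_r$ (with $r\in[3R/4,R]$) cannot lie entirely inside a single ball of $\mc B_1$, hence meets the set where $H(u,\mc B_1)=u$. Your proposal never secures this hypothesis, so Lemma~\ref{L: lem311} cannot be invoked as written.

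\textbf{The comparison map and the error terms.} The interpolation is between $u$ and $u_1:=H(u,\mc B_1)$ on the \emph{same} arc $\partial^A_r$; the map $H(u,\tfrac12\mc B_2)$ plays no role there. The competitor for $H(u_1,B_j^2)$ is: $u_1$ on $D^+_R\setminus D^+_r$, the Lemma~\ref{L: lem311} band on $D^+_r\setminus D^+_{r-\rho}$, and a \emph{rescaling of $H(u,D^+_r)$} on $D^+_{r-\rho}$. Consequently the ``trace difference'' factor in Lemma~\ref{L: lem311} is controlled by $\int_{B_j^2}|\nabla(u-u_1)|^2$, hence (via Theorems~\ref{T:convexity1}, \ref{T:convexity2}) by $E(u)-E(H(u,\mc B_1))$, \emph{not} by $E(u)-E(H(u,\tfrac12\mc B_2))$ as you state. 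One obtains the per-ball inequality~(\ref{3.16}), a \emph{linear} bound minus an error term; the squaring in the first formula appears only \emph{after} summing over $\mc B_{2,-}$, applying Cauchy--Schwarz, and doing a short algebraic rearrangement using $E(u)-E(H(u,\mc B_1))\le E(u)-E(H(u,\mc B_1,\mc B_2))$. The oscillation hypothesis $|f(\theta)-f(0)|\le\kappa/3$ is obtained directly from the arc energy bound selected by coarea, not from Theorem~\ref{e-reg}.
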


\begin{proof}
The proof is analogous to the proof of \cite[Lemma 3.8]{CM08}, and the main difficulty arises from those boundary balls. We will include the details for completeness, and focus on how we use the new interpolation result, i.e. Lemma \ref{L: lem311}.\vskip 1.5mm

Let $\mc B_1=\{B_\alpha^1\}$ and $\mc B_2=\{B_j^2\}$. We need to clarify the second replacement. Observe that the total energy of $u$ on the union of the balls $\mc B_1\cup\mc B_2$ is at most $2\eps_0/3$, and the free boundary harmonic replacement on $\mc B_1$ does not change the map outside these balls and is energy non-increasing, then it follows that the total energy of $H(u,\mc B_1)$ on $\mc B_2$ is at most $2\eps_0/3$.\vskip 1.5mm

We will divide $\mc B_2$ into two disjoint subsets $\mc B_{2,+}$ and $\mc B_{2,-}$, set
\[\mc B_{2,+}=\{B_j^2\in\mc B_2:\frac{1}{2}B_j^2\subset B_\alpha^1 \mbox{ for some } B_\alpha^1\in \mc B_1\}\mbox{ and }\mc B_{2,-}=\mc B_2\setminus\mc B_{2,+}. \]
Since the balls in $\mc B_2$ are disjoint, it follows that
\[E(u)-E(H(u,\frac{1}{2}\mc B_2))=\left(E(u)-E(H(u,\frac{1}{2}\mc B_{2,+}))\right)+\left(E(u)-E(H(u,\frac{1}{2}\mc B_{2,-}))\right). \]

Now we have two cases.

\vspace{1em}
{\bf Case 1:} Suppose
\[ \left(E(u)-E(H(u,\frac{1}{2}\mc B_{2,+}))\right)\geq\left(E(u)-E(H(u,\frac{1}{2}\mc B_{2,-}))\right), \]
then
\[ \left(E(u)-E(H(u,\frac{1}{2}\mc B_{2,+}))\right)\geq\frac{1}{2}\left(E(u)-E(H(u,\frac{1}{2}\mc B_{2}))\right). \]

Since the balls in $\mc B_{2,+}$ are contained in balls in $\mc B_1$ and harmonic replacement minimize energy, we get
\[ E(u,\mc B_1,\mc B_2)\leq E(u,\mc B_1)\leq E(u,\mc B_{2,+}). \]
so that
\begin{equation*}
    \begin{split}
        E(u)-E(H(u,\mc B_1,\mc B_2))&\geq E(u)-E(H(u,\frac{1}{2}\mc B_{2,+}))\\
        &\geq \frac{1}{2}\left( E(u)-E(H(u,\frac{1}{2}\mc B_2))\right)\\
        &\geq k\left( E(u)-E(H(u,\frac{1}{2}\mc B_2))\right)^2\,,
    \end{split}
\end{equation*}
if $k\leq 1/(2\eps)$.

\vspace{1em}
{\bf Case 2:} Suppose 
\[\left(E(u)-E(H(u,\frac{1}{2}\mc B_{2,+}))\right)\leq\left(E(u)-E(H(u,\frac{1}{2}\mc B_{2,-}))\right), \]
then
\[\left(E(u)-E(H(u,\frac{1}{2}\mc B_{2,-}))\right)\geq\frac{1}{2}\left(E(u)-E(H(u,\frac{1}{2}\mc B_{2}))\right). \]

Let $\tau>0$ be given by Lemma \ref{L: lem311}. We can assume that
\begin{equation}
\label{E:assumption on energy decay over B1}
    9\int_D\vert \nabla H(u,\mc B_1)-\nabla u\vert^2\leq \tau^2.
\end{equation}
Otherwise Theorem \ref{T:convexity1} and Theorem \ref{T:convexity2} giveg the result with $k=\tau^2/(2\eps_0^2)$. In fact, if 
\[    9\int_D\vert \nabla H(u,\mc B_1)-\nabla u\vert^2>\tau^2, \]
then applying Theorem \ref{T:convexity1} and Theorem \ref{T:convexity2} to each classical ball or boundary ball in $\mc B_1$, we get
\[ E(u)-E(H(u,\mc B_1,\mc B_2))\geq E(u)-E(H(u,\mc B_1))\geq\frac{1}{2}\int_D\vert\nabla u-\nabla H(u,\mc B_1)\vert^2>\tau^2/18. \]
Noting that $E(u)-E(H(u,\frac{1}{2}\mc B_2))\leq (1/3)\eps_0$, we get the desired estimate.\vskip 1.5mm

Assuming (\ref{E:assumption on energy decay over B1}), we want to show the following estimate for each $B_j^2\in\mc B_{2,-}$ that

\begin{equation}\label{3.16}
\begin{split}
        \int_{B_j^2}\vert\nabla H(u,\mc B_1)\vert^2-\int_{B_j^2}\vert\nabla H(u,\mc B_1,B_j^2)\vert^2\geq \int_{\frac{1}{2}B_j^2}\vert\nabla u\vert^2-\int_{\frac{1}{2}B_j^2}\vert\nabla H(u,\frac{1}{2}B_j^2)\vert^2\\- C\left(\int_{B_j^2}\vert\nabla u\vert^2+\vert\nabla H(u,\mc B_1)\vert^2\right)^{\frac{1}{2}}\left(\int_{B_j^2}\vert\nabla(u-H(u,\mc B_1))\vert^2\right)^{\frac{1}{2}}.
\end{split}
\end{equation}

If this estimate is true, summing it over balls in $\mc B_{2,-}$ and using Cauchy-Schwartz inequality for discrete sums $\vert\sum a_jb_j\vert\leq(\sum a_j^2)^{1/2}(\sum b_j^2)^{1/2}$, Theorem \ref{T:convexity1} and Theorem \ref{T:convexity2}, we get
\begin{align}
        E(H(u,\mc B_1))-E(H(u,\mc B_1,\mc B_{2,-}))&\geq E(u)-E(H(u,\frac{1}{2}\mc B_{2,-}))-C\eps_0^{1/2}(E(u)-E(H,\mc B_1))^{1/2}\notag\\
        &\geq E(u)-E(H(u,\frac{1}{2}\mc B_{2,-}))-C\eps_0^{1/2}(E(u)-E(H,\mc B_1, \mc B_{2,-}))^{1/2}.
\end{align}
Then noting that $E(H(u,\mc B_1))-E(H(u,\mc B_1,\mc B_{2,-}))\leq E(u)-E(H(u,\mc B_1,\mc B_{2,-}))\leq E(u)-E(H(u,\mc B_1,\mc B_{2}))\leq 2\eps_0/3<\eps_0$, we get the desired estimate.

So we only need to prove (\ref{3.16}) to conclude case 2. If $B_j^2$ is a classical ball, this is just the result in \cite{CM08}. Now let us consider the case when $B_j^2$ is the boundary ball $D^+_R$ of radius $R$ centered at $0$ in the upper half plane $\mb H^2$. Set $u_1=H(u,\mc B_1)$. By the co-area formula, there exists some $r\in[3R/4,R]$ with
\[\int_{\partial^A_r}\vert\nabla u_1-\nabla u\vert^2\leq\frac{9}{R}\int_{3R/4}^R(\int_{\partial^A_s}\vert\nabla u_1-\nabla u\vert^2)ds\leq\frac{9}{r}\int_{D^+_R}\vert\nabla u_1-\nabla u\vert^2\,,\]
\[\int_{\partial^A_r}(\vert \nabla u_1\vert^2+\vert\nabla u\vert^2)\leq\frac{9}{R}\int_{3R/4}^R(\int_{\partial^A_s}\vert\nabla u_1\vert^2+\vert\nabla u\vert^2)ds\leq\frac{9}{r}\int_{D^+_R}(\vert\nabla u_1\vert^2+\vert\nabla u\vert^2)\leq \frac{24\eps_0}{R}\,.\]
Note the second estimate indicates that the length of the image of $u\vert_{\partial^A_R}$ is bounded by a universal constant times $\eps_0$. So if $\eps_0$ is smaller than a constant multiple of $\kappa$ (where $\kappa$ is the radius for Fermi coordinates of $\Gamma$; see Lemma \ref{L: lem311}), we will get $\vert u(x)-u(0)\vert\leq \kappa/2$ for all $x\in \partial^A_R$. Then we can apply Lemma \ref{L: lem311} to get some $\rho\in(0,r/2]$ and a map $w:D^+_r\setminus D^+_{r-\rho}\to \cN$ with $w(r,\theta)=u_1(r,\theta)$ and $w(r-\rho,\theta)=u(r,\theta)$, such that

\[\int_{D^+_r\setminus D^+_{r-\rho}}\vert \nabla w\vert^2\leq C(\int_{D^+_R}\vert\nabla u\vert^2+\vert\nabla H(u,\mc B_1)\vert^2)^{1/2}(\int_{D^+_R}\vert\nabla(u-H(u,\mc B_1))\vert^2)^{1/2}\,.\]

The map $x\to H(u, D^+_r)(rx/(r-\rho))$ maps $D^+_{r-\rho}$ to $\cN$ and agrees with $w$ on $\partial_{r-\rho}^A$. So we get a map from $D^+_R$ to $\cN$ which is equal to $H(u,\mc B_1)$ on $D^+_R\setminus D^+_r$, equal to $w$ on $D^+_r\setminus D^+_{r-\rho}$ and is equal to $H(u,D^+_r)(r\cdot/(r-\rho))$ on $D^+_{r-\rho}$. This new map gives an upper bound for the energy of $H(u_1,D^+_R)$:
\[ \int_{D^+_R}\vert\nabla H(u_1,D^+_R)\vert^2\leq \int_{D^+_R\setminus D^+_r}\vert\nabla u_1\vert^2+\int_{D^+_r\setminus D^+_{r-\rho}}\vert\nabla w\vert^2+\int_{D^+_r}\vert\nabla H(u,B_r)\vert^2. \]

Using previous estimate and the fact $\vert\vert\nabla u_1\vert^2-\vert\nabla u\vert^2\vert\leq(\vert\nabla u_1\vert+\vert \nabla u\vert)\vert \nabla(u-u_1)\vert$ we get

\begin{equation*}
    \begin{split}
        \int_{D^+_R}\vert\nabla u_1\vert^2-\int_{D^+_R}\vert\nabla H(u_1,D^+_R)\vert^2\geq &\int_{D^+_r}\vert\nabla u_1\vert^2-\int_{D^+_r}\vert\nabla H(u,D^+_r)\vert^2-\int_{D^+_r\setminus D^+_{r-\rho}}\vert\nabla w\vert^2\\
        \geq &\int_{D^+_r}\vert \nabla u\vert^2-\int_{D^+_r}\vert\nabla H(u,D^+_r)\vert^2-\\
        &C\left(\int_{D^+_r}\vert\nabla u\vert^2+\vert\nabla u_1\vert^2\right)^{1/2}\left(\int_{D^+_r}\vert\nabla(u-u_1)\vert^2\right)^{1/2}.
    \end{split}
\end{equation*}

Since $r>R/2$, we have
\begin{equation*}
\begin{split}
    \int_{D^+_{R/2}}\vert \nabla u\vert^2&= \int_{D^+_r}\vert\nabla u\vert^2-\int_{D^+_r\setminus D^+_{R/2}}\vert\nabla u\vert^2\leq \int_{D^+_r}\vert\nabla u\vert^2-\int_{D^+_r\setminus D^+_{R/2}}\vert\nabla H(u,D^+_r)\vert^2\\
    &\leq \int_{D^+_r}\vert\nabla u\vert^2-\int_{D^+_r}\vert\nabla H(u,D^+_r)\vert^2+\int_{D^+_{R/2}}\vert\nabla H(u,D^+_r)\vert^2\\
    &\leq \int_{D^+_r}\vert\nabla u\vert^2-\int_{D^+_r}\vert\nabla H(u,D^+_r)\vert^2+\int_{D^+_{R/2}}\vert\nabla H(u,D^+_{R/2})\vert^2.
\end{split}
\end{equation*}
Combining this with the previous estimate we get (\ref{3.16}). Hence we conclude the first formula in the lemma.
\vskip 1.5mm

For the second formula, by the same argument as in \cite{CM08}, based on the proof of first formula as above we get the second formula.  Then we complete the proof.
\end{proof}

\subsection{Tightening process}
\label{SS:tightening process}
Now we have developed the tools to prove Theorem \ref{T:Tightening theorem}. The proof is actually the same as the proof of \cite[Theorem 2.1]{CM08}, because every ingredient in the proof has been proved to be true for generalized balls in previous sections. We give the proof here for the convenience of the readers.\vskip 1.5mm

Given a sweep-out $\sigma\in\Omega$ and $\eps\leq \eps_0$, we define the maximal improvement for free boundary harmonic replacement on families of generalized balls with energy at most $\eps$ by
\[ e_{\sigma,\eps}(t)=\sup_{\mc B}\{E(\sigma(\cdot,t))-E(H(\sigma(\cdot,t),\frac{1}{2}\mc B))\}. \]
Here the supremum is taken over all collection $\mc B$ of disjoint closed generalized balls where the total energy of $\sigma(\cdot,t)$ on $\mc B$ is at most $\eps$. $e_{\sigma,\eps}(t)$ is positive if $\sigma(\cdot,t)$ is not harmonic. \vskip 1.5mm

We first show that the maximal improvement of a given slice (which is not harmonic) can control the maximal improvement of any nearby slices.

\begin{lemma}\label{lem3.20}
Given $t\in(0, 1)$, if $\sigma(\cdot,t)$ is not harmonic and $\eps<\eps_0$, then there is an open interval $I^t$ containing $t$ so that $e_{\sigma,\eps/2}(s)\leq 2e_{\sigma,\eps}(t)$ for all $s$ in the double interval $2I^t$.
\end{lemma}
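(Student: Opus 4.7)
The plan is to combine three ingredients: the $W^{1,2}\cap C^0$-continuity of the sweepout $\sigma$ (built into the definition of $\Omega$), the $W^{1,2}\cap C^0$-continuity of the (partial) free boundary harmonic replacement established in Theorem \ref{T:continuity of harmonic replacement}, and the strict positivity $e_{\sigma,\eps}(t)>0$, which holds because any nontrivial harmonic replacement of a non-harmonic $\sigma(\cdot,t)$ strictly decreases energy (by the uniqueness part of energy convexity, Corollary \ref{C:uniqueness of small energy free boundary harmonic maps}). The strategy is: given an admissible family $\mc B$ in the definition of $e_{\sigma,\eps/2}(s)$, reuse the \emph{same} $\mc B$ as a competitor for $e_{\sigma,\eps}(t)$, and show that the two energy improvements differ by less than $e_{\sigma,\eps}(t)$ when $s$ is close enough to $t$.

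First I would set $\eta := e_{\sigma,\eps}(t)>0$ and pick an open interval $I^t\ni t$ small enough that for every $s\in 2I^t$,
\[
\|\sigma(\cdot,s)-\sigma(\cdot,t)\|_{W^{1,2}(D)\cap C^0(\overline D)}\leq\delta,
\]
where $\delta>0$ will be fixed below. Then for any admissible $\mc B$ for $e_{\sigma,\eps/2}(s)$, we have
\[
\int_{\cup\mc B}|\nabla\sigma(\cdot,t)|^2\leq\int_{\cup\mc B}|\nabla\sigma(\cdot,s)|^2+C\delta\leq \eps/2+C\delta<\eps,
\]
provided $\delta$ is small, so $\mc B$ is admissible for $e_{\sigma,\eps}(t)$ and thus
\[
E(\sigma(\cdot,t))-E(H(\sigma(\cdot,t),\tfrac12\mc B))\leq\eta.
\]

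Next I would compare the two improvements on the same family. On each half-ball $\tfrac12 B$, by Theorem \ref{T:continuity of harmonic replacement} applied to the input maps $\sigma(\cdot,s)|_{\frac12 B}$ and $\sigma(\cdot,t)|_{\frac12 B}$ (which have energy $\leq\eps<\eps_0$), the replacements are close in $W^{1,2}\cap C^0(\frac12 B)$ with a modulus of continuity $\omega(\delta)$. Summing the bound over the disjoint collection $\frac12\mc B$ yields
\[
\bigl|E(H(\sigma(\cdot,s),\tfrac12\mc B))-E(H(\sigma(\cdot,t),\tfrac12\mc B))\bigr|\leq\omega(\delta),
\]
together with $|E(\sigma(\cdot,s))-E(\sigma(\cdot,t))|\leq C\delta$, we get
\[
E(\sigma(\cdot,s))-E(H(\sigma(\cdot,s),\tfrac12\mc B))\leq \eta+\omega(\delta)+C\delta.
\]
Choosing $\delta>0$ so that $\omega(\delta)+C\delta\leq\eta$ (possible because $\eta>0$) and then taking the supremum over admissible $\mc B$ gives $e_{\sigma,\eps/2}(s)\leq 2\eta=2e_{\sigma,\eps}(t)$ for all $s\in 2I^t$.

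The main obstacle is making the modulus $\omega$ \emph{uniform} over all admissible families $\mc B$, since a priori Theorem \ref{T:continuity of harmonic replacement} is stated on the fixed domain $D^+$ (or $D$), whereas $\mc B$ may contain arbitrarily many generalized balls of arbitrarily small radius. This is overcome by conformal invariance in dimension two: each classical ball is conformally equivalent to $D$ by a scaling, and each boundary ball is conformally equivalent to $D^+$ via the fractional linear transformation $\Pi_B$ of Definition \ref{D:generalized balls}, under which both the Dirichlet energy and the harmonic replacement operator are invariant. Therefore the continuity estimate on each $\tfrac12 B$ reduces to the corresponding estimate on $D$ or $D^+$ with the same smallness threshold $\eps_0$, yielding a modulus $\omega$ depending only on $(\mc N,\Gamma)$ and $\eps_0$, not on the ball. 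The ball-by-ball bounds then add up (using disjointness of $\mc B$ and a Cauchy-Schwarz step analogous to the one in the proof of Lemma \ref{lem3.8}) to give the required global comparison.
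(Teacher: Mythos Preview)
Your argument is essentially the paper's: choose an interval via the $C^0\cap W^{1,2}$-continuity of $\sigma$, observe that any $\mc B$ admissible for $e_{\sigma,\eps/2}(s)$ becomes admissible for $e_{\sigma,\eps}(t)$, and compare the two energy drops on the same $\mc B$ using Theorem~\ref{T:continuity of harmonic replacement}. The paper records the needed uniform-in-$\mc B$ inequality $\bigl|E(H(\sigma(\cdot,s),\tfrac12\mc B))-E(H(\sigma(\cdot,t),\tfrac12\mc B))\bigr|\le\tfrac12 e_{\sigma,\eps}(t)$ as a direct consequence of that theorem with a threshold $\delta_1$ depending only on $t$, without spelling out the conformal-invariance reduction you add in your last paragraph.
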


\begin{proof}
By Theorem \ref{T:continuity of harmonic replacement}, there exists $\delta_1>0$ depending on $t$ such that if $\Vert\sigma(\cdot,t)-\sigma(\cdot,s)\Vert_{C^0\cap W^{1,2}}<\delta_1$, and $\mc B$ is a finite collection of disjoint generalized balls where both $\sigma(\cdot,t)$ and $\sigma(\cdot,s)$ have energy at most $\eps_0$, then
\begin{equation}
\label{3.22}
\vert E(H(\sigma(\cdot,s),\frac{1}{2}\mc B))-E(H(\sigma(\cdot,t),\frac{1}{2}\mc B))\vert\leq \frac{1}{2}e_{\sigma,\eps}(t). 
\end{equation}

Since $t\to \sigma(\cdot,t)$ is continuous as a map to $C^0\cap W^{1,2}$, we can choose $I^t$ such that for $s\in I^{2t}$, $\Vert\sigma(\cdot,t)-\sigma(\cdot,s)\Vert_{C^0\cap W^{1,2}}<\delta_1$ holds and 
\begin{equation}
\label{3.23}
\int_{D}\vert\vert\nabla\sigma(\cdot,t)\vert^2-\vert\nabla\sigma(\cdot,s)\vert^2\vert\leq\min\{\eps,e_{\sigma,\eps}(t)\}.
\end{equation}
Now suppose $s\in 2I^t$ and the energy of $\sigma(\cdot,s)$ is at most $\eps/2$ on a collection $\mc B$. It follows from (\ref{3.23}) that the energy of $\sigma(\cdot, t)$ on $\mc B$ is at most $\eps$. Combining (\ref{3.22}) we get
\[ \vert E(\sigma(\dot,s))-E(H(\sigma(\cdot,s)\frac{1}{2}\mc B))-E(\sigma(\cdot,t))+E(H(\sigma(\cdot,t),\frac{1}{2}\mc B))\vert\leq e_{\sigma,\eps}(t). \]

Since this estimate applies to any $\mc B$, also notice that $e_{\sigma,\eps}$ is monotone increasing in $\eps$, we get that $e_{\sigma,\eps/2}(s)\leq 2e_{\sigma,\eps}(t)$.
\end{proof}

Theorem \ref{T:Tightening theorem} indicates that our tightening process should effectively decrease the energy of those non-harmonic slices with large energy (when $E>W/2$). The next lemma shows that we can find a replacement to decrease the energy of those slices for certain amount.

\begin{lemma}\label{lem3.24}
If $\tilde \gamma\in \Omega$ has no harmonic slices other than $u(\cdot,0)$ and $u(\cdot,1)$, then we get an integer $m$ depending on $\tilde \gamma$, $m$ collections of generalized balls $\mc B_1,\cdots,\mc B_m$ in $D$ where the balls in each collection $\mc B_j$ are pairwise disjoint, and $m$ continuous functions $r_1,\cdots,r_m:[0,1]\to [0,1]$ so that for each $t$: 
\begin{enumerate}
    \item at most two $r_j(t)$'s are positive and $\frac{1}{2}\int_{r_j(t)\mc B}\vert \nabla\tilde\gamma(\cdot,t)\vert^2<\eps_0/3$ for each $j$;
    \item if $E(\tilde\gamma(\cdot,t))>W/2$, then there exists $j(t)$ so that the harmonic replacement on $(r_j(t)/2)\mc B_{j(t)}$ decreases energy by at least $e_{\tilde\gamma,\eps_0/8}(t)/8$.
\end{enumerate}
\end{lemma}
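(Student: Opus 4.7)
The map $t\mapsto\tilde\gamma(\cdot,t)$ is continuous in $C^0\cap W^{1,2}$, so $t\mapsto E(\tilde\gamma(\cdot,t))$ is continuous and $\mathcal K:=\{t:E(\tilde\gamma(\cdot,t))\ge W/2\}$ is a compact subset of $(0,1)$ (the endpoints yield constant maps with zero energy). By the hypothesis that no interior slice is harmonic, $e_{\tilde\gamma,\eps_0/4}(t)>0$ for every $t\in(0,1)$, in particular on $\mathcal K$. For each $t\in\mathcal K$, the defining supremum furnishes a finite disjoint collection $\mathcal B(t)$ of generalized closed balls with $\int_{\cup\mathcal B(t)}|\nabla\tilde\gamma(\cdot,t)|^2\le\eps_0/4$, on which free boundary harmonic replacement on $\tfrac12\mathcal B(t)$ decreases the energy of $\tilde\gamma(\cdot,t)$ by at least $\tfrac78\,e_{\tilde\gamma,\eps_0/4}(t)$.

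\textbf{Neighborhoods.} Combining (i) the $C^0\cap W^{1,2}$-continuity of free boundary harmonic replacement on a fixed collection (Theorem~\ref{T:continuity of harmonic replacement}, applied ball-by-ball since the balls in $\mathcal B(t)$ are disjoint), (ii) the continuity of $s\mapsto\tilde\gamma(\cdot,s)$ in $C^0\cap W^{1,2}$, and (iii) Lemma~\ref{lem3.20} applied at $t$ with $\eps=\eps_0/4$, we select for each $t\in\mathcal K$ an open interval $I^t\ni t$ so that for every $s\in 2I^t$:
\begin{itemize}
\item[(a)] $\int_{\cup\mathcal B(t)}|\nabla\tilde\gamma(\cdot,s)|^2<2\eps_0/3$;
\item[(b)] the energy decrease at $s$ from harmonic replacement on $\tfrac12\mathcal B(t)$ is at least $\tfrac12\,e_{\tilde\gamma,\eps_0/4}(t)$;
\item[(c)] $e_{\tilde\gamma,\eps_0/8}(s)\le 2\,e_{\tilde\gamma,\eps_0/4}(t)$.
\end{itemize}

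\textbf{Partition of unity and verification.} Extract a finite subcover $\{I^{t_k}\}_{k=1}^m$ of $\mathcal K$. A standard one-dimensional refinement (passing to a minimal subcover and, if necessary, splitting any triple overlap) lets us order $t_1<\cdots<t_m$ and assume that at most two of the closures $\bar I^{t_k}$ contain any given point. By the Lebesgue-number lemma, pick closed sub-intervals $J^{t_k}\subset I^{t_k}$ with $t_k\in\mathrm{int}\,J^{t_k}$ and $\bigcup_k J^{t_k}\supset\mathcal K$. Define continuous $r_k:[0,1]\to[0,1]$ to be $1$ on $J^{t_k}$, $0$ outside $I^{t_k}$, and piecewise linear in between, and put $\mathcal B_k:=\mathcal B(t_k)$. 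At most two $r_k(t)$'s are simultaneously positive; since shrinking a ball only decreases its enclosed energy, (a) yields the energy half of condition~(1). For condition~(2), if $E(\tilde\gamma(\cdot,t))>W/2$ then $t\in J^{t_k}$ for some $k$, so $r_k(t)=1$ and $(r_k(t)/2)\mathcal B_k=\tfrac12\mathcal B_k$; (b) at $s=t$ gives an energy decrease of at least $\tfrac12\,e_{\tilde\gamma,\eps_0/4}(t_k)$, and (c) at $s=t$ gives $e_{\tilde\gamma,\eps_0/4}(t_k)\ge\tfrac12\,e_{\tilde\gamma,\eps_0/8}(t)$. Combining, the decrease is at least $\tfrac14\,e_{\tilde\gamma,\eps_0/8}(t)\ge\tfrac18\,e_{\tilde\gamma,\eps_0/8}(t)$, proving~(2).

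\textbf{Main obstacle.} The essential structural observation is the calibrated gap between the $\eps_0/4$ scale used to build $\mathcal B(t)$ and the $\eps_0/8$ scale appearing in the target estimate. This gap is exactly what lets Lemma~\ref{lem3.20} be applied at $\eps=\eps_0/4$ so that the factor of $2$ it loses is absorbed while still bounding the improvement from below by a multiple of $e_{\tilde\gamma,\eps_0/8}(t)$; without this mismatch the circular comparison between $e_{\tilde\gamma,\eps}(s)$ and $e_{\tilde\gamma,\eps}(t)$ cannot be closed. The other technical burden is the one-dimensional combinatorics of the $r_k$'s: making them continuous, having pairwise-overlapping supports, and having plateau regions $J^{t_k}$ that still cover $\mathcal K$ are individually routine in $\mathbb R$, but ensuring they hold simultaneously requires the refinement step carried out above.
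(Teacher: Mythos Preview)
Your proof is correct and follows essentially the same approach as the paper's: choose near-optimal collections $\mathcal B(t)$ at the scale $\eps_0/4$, use continuity of harmonic replacement together with Lemma~\ref{lem3.20} to propagate the energy-decrease bound and the comparison with $e_{\tilde\gamma,\eps_0/8}$ to a neighborhood, extract a finite cover with at most pairwise overlap, and build continuous cutoff functions $r_j$. The only cosmetic difference is that the paper sets $r_j\equiv 1$ on the full $\overline{I^{t_j}}$ with support extending into the doubled interval $2I^{t_j}$, whereas you shrink to sub-intervals $J^{t_k}\subset I^{t_k}$ via the Lebesgue number lemma and keep the support inside $I^{t_k}$; both devices yield the same conclusions.
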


\begin{proof}
Since the energy of the slices is continuous in $t$, the set $I=\{t:E(\tilde\gamma(\cdot,t))\geq W/2\}$ is compact. For each $t\in I$, choose a finite collection $\mc B^t$ of disjoint closed balls in $D$ with $\frac{1}{2}\int_{\mc B^t}\vert\nabla\tilde\gamma(\cdot,t)\vert^2\leq\eps_0/4$ so that
\[ E(\tilde\gamma(\cdot,t))-E(H(\tilde\gamma(\cdot,t),\frac{1}{2}\mc B^t))\geq\frac{1}{2}e_{\tilde\gamma,\eps_0/4}(t). \]
Note $u(\cdot,0)$ and $u(\cdot,1)$ does not lie in $I$, this energy improvement is positive. Lemma \ref{lem3.20} gives an open interval $I^t$ containing $t$ so that for all $s\in 2I^t$,
\[ e_{\tilde \gamma,\eps_0/8}(s)\leq2e_{\tilde\gamma,\eps_0/4}(t). \]

Using the continuity of $\tilde\gamma(\cdot,s)$ in $C^0\cap W^{1,2}$, we can shrink $I^t$ so that $\tilde\gamma(\cdot,s)$ has energy at most $\eps_0/3$ in $\mc B^t$ for $s\in 2I^t$ and in addition
\begin{equation}
    \vert E(\tilde\gamma(\cdot,s))-E(H(\tilde\gamma(\cdot,s),\frac{1}{2}\mc B^t))-E(\tilde\gamma(\cdot,t))+E(H(\tilde\gamma(\cdot,t),\frac{1}{2}\mc B^t))\vert\leq\frac{1}{4}e_{\tilde\gamma,\eps_0/4}(t).
\end{equation}

Since $I^t$ is compact, we can cover $I^t$ be finitely many $I^t$'s, say $I^{t_1},\cdots,I^{t_m}$. By discarding some of them, we can arrange that each $t$ lies in at least one $\overline{I^{t_j}}$ and at most two consecutive $\overline{I^{t_j}}$. In other word, we get a family of intervals $I^{t_j}$'s such that $I^{t_j}$ only intersects $I^{t_{j-1}}$ and $I^{t_{j+1}}$, and $I^{t_{j-1}}$ and $I^{t_{j+1}}$ does not intersect each other. Now for each $j=1,\cdots,m$, we choose a continuous function $r_j(t):[0,1]\to[0,1]$ so that $r_j(t)=1$ on $\overline{I^{t_j}}$ and $r_j(t)=0$ for $t\not\in 2I^{t_j}\cap (I^{t_{j-1}}\cup I^{t_{j}}\cup I^{t_{j+1}})$.
\vskip 1.5mm

Property (1) follows directly, and property (2) follows from the estimate in the proof.
\end{proof}

Use this construction we can prove Theorem \ref{T:Tightening theorem}. 

\begin{proof}[Proof of Theorem \ref{T:Tightening theorem}]
Let $\mc B_1,\cdots,\mc B_m$ and $r_1,\cdots,r_m$ be given by Lemma \ref{lem3.24}. We will use an $m$-step replacement process to define $\gamma$. \vskip 1.5mm

Let us set $\gamma^0=\tilde\gamma$. Then define $\gamma^j$ by applying harmonic replacement to each slice of $\gamma^{j-1}$, set $\gamma^{j}(\cdot,t)=H(\gamma^{j-1}(\cdot,t),r_k(t)\mc B_k)$. We set $\gamma=\gamma^m$.\vskip 1.5mm

We first claim this is a well-defined process, and $\gamma$ is again in $\Omega_{\tilde\gamma}$. In fact, property (1) in Lemma \ref{lem3.24} implies that each energy minimizing map replaces a map with energy at most $2\eps_0/3<\eps_0$. Moreover, Theorem \ref{T:continuity of harmonic replacement} implies the replacement depends continuously on the boundary values, which are themselves continuous in $t$. It is clear $\gamma$ is homotopic to $\tilde\gamma$ since continuously shrinking the disjoint closed balls on which we make harmonic replacement gives an explicit homotopy. \vskip 1.5mm

Now we show this $\gamma$ satisfies the requirement of Theorem \ref{T:Tightening theorem}. Suppose $t\in[0,1]$ is chosen with $E(\tilde\gamma(\cdot,t))\geq W/2$, then property (2) of Lemma \ref{lem3.24} implies that the harmonic replacement for $\tilde \gamma(\cdot,t)$ on $(r_j(t)/2)\mc B_{j(t)}$ decreases the energy by at least $e_{\tilde\gamma,\eps_0}(t)/8$. Thus from Lemma \ref{lem3.8} we get
\begin{equation}\label{3.28}
    E(\tilde\gamma(\cdot,t))-E(\gamma(\cdot,t))\geq k(\frac{1}{8}e_{\tilde\gamma,\eps_0/8}(t))^2.
\end{equation}

Suppose that $\mc B$ is a finite collection of disjoint generalized closed balls in $D$ so that the energy of $\gamma(\cdot,t)$ on $\mc B$ is at most $\eps_0/12$. We can assume $\gamma^k(\cdot,t)$ has energy at most $\eps_0/8$ on $\mc B$ for every $k$, otherwise we can chose $\Psi$ to be a linear function $\Psi(x)=Cx$ for $C$ large. Now we apply (\ref{E:lem3.8 second formula}) in Lemma \ref{lem3.8} twice with $\mu=1/8$ and then $\mu=1/4$ to get
\begin{equation}\label{3.29}
\begin{split}
       E(\gamma(\cdot,t))-E(H(\gamma(\cdot,t),\frac{1}{8}\mc B))&\leq E(\tilde\gamma(\cdot,t))-E(H(\tilde\gamma(\cdot,t),\frac{1}{2}\mc B))+\frac{2}{k}(E(\tilde\gamma(\cdot,t))-E(\gamma(\cdot,t)))^{1/2}\\
       &\leq e_{\tilde\gamma,\eps_0/8}(t)+\frac{2}{k}(E(\delta\gamma(\cdot,t))-E(\gamma(\cdot,t)))^{1/2}.
\end{split}
\end{equation}
Combining (\ref{3.28}) and (\ref{3.29}) with Theorem \ref{T:convexity2}, we complete the proof by choosing $\Psi(x)=Cx+Cx^{1/2}$ for some fixed large $C$ and $\eps_1=\frac{1}{2}\eps_0$.
\end{proof}

\section{Compactness of maximal slices}
\label{S:Compactness of maximal slices}

This section is devoted to the proof of Theorem \ref{T:Compactness theorem}. In particular, we will prove that any sequence of maps (as slices of approximating sweepouts) whose energy converges to the width will converge to a bubble tree of free boundary harmonic disks and harmonic spheres. Similar bubble tree convergence was first studied by Fraser \cite{Fraser00} for $\alpha$-harmonic disks with free boundary, (where the notion of $\alpha$-maps was introduced by Sacks-Uhlenbeck \cite{Sacks-Uhlenbeck81}). In our proof, we have to adopt the schemes in both \cite[Appendix B]{CM08} and \cite{Fraser00}. As an essential ingredient, we generalize the notion of almost harmonic maps, their asymptotic analysis and compactness in \cite[Appendix B]{CM08} to our free boundary setting.

\subsection{Compactness of free boundary almost harmonic maps}

We first introduce our notion of almost harmonic maps with free boundary. Note that generalized balls were defined in Definition \ref{D:generalized balls}.
\begin{defi}
\label{D:almost harmonic with free boundary}
We say a sequence of maps $u^j: (D, \partial D)\to (\mc N, \Gamma)$ is {\em $\eps_1$-almost harmonic in the free boundary sense} if 
\begin{itemize}
\item[$(B_0)$:] for any generalized ball $B\subset D$ with $\int_B\vert\nabla u^j\vert^2<\eps_1$, there is a free boundary harmonic replacement $v:\frac{1}{8}B\to \mc N$ of $u^j$ with free boundary along $v(\partial \frac{1}{8}B \cap \partial D)\subset \Gamma$ (which may be empty), 
which satisfies the following bound:
\[ \int_{\frac{1}{8}B}\vert\nabla u^j-\nabla v\vert^2\leq\frac{1}{j}. \]
\end{itemize}
\end{defi}

Next we have the following preliminary compactness result for a sequence of almost harmonic maps with free boundary and finite energy. In particular, any such sequence converges to a harmonic map with free boundary in the $W^{1, 2}$-norm locally away from only finitely many points in $D$. Let $\eps_{SU}$ and $\eps_F$ be the small thresholds of the $\eps$-regularity results for harmonic maps $u:D\to \mc N$ or free boundary harmonic maps $u: (D, \partial D)\to (\mc N, \Gamma)$ given in \cite[Main Estimate 3.2]{Sacks-Uhlenbeck81} and \cite[Proposition 1.7]{Fraser00} respectively.
\begin{thm}
\label{T:Compactness of free boundary almost harmonic maps}
Let $\eps_1>0$ be less than $\eps_{SU}$ and $\eps_{F}$. Assume $u^j: (D, \partial D)\to (\mc N, \Gamma)$ is a sequence of $W^{1,2}$-maps satisfying property $(B_0)$ in Definition \ref{D:almost harmonic with free boundary} and $E(u^j)\leq E_0<\infty$. Then there exist a finite collection of points $\{x_1,\cdots, x_k\}\subset \overline{D}$, and a subsequence of maps (still denoted by $u^j$), and a harmonic map $u: (D, \partial D) \to (\mc N, \Gamma)$ with free boundary, so that $u^j\to u$ weakly in $W^{1,2}(D)$, and moreover for any compact subset $K\subset \overline D\setminus\{x_1,\cdots,x_k\}$, $u^j\to u$ strongly in $W^{1,2}(K)$. Furthermore, the measures $\vert\nabla u^j\vert^2 dx$ converges to a measure $\nu$ on $\overline{D}$ with $\nu(\{x_i\})\geq \eps_1$ for all $1\leq i\leq k$ and $\nu(\overline{D})\leq E_0$.
\end{thm}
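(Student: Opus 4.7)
My plan is to run a standard concentration-compactness argument, adapted to the generalized-ball framework of Definition \ref{D:generalized balls} and using the free-boundary $\varepsilon$-regularity in place of its interior counterpart. First I would use $E(u^j) \leq E_0$ to extract (after passing to a subsequence) a weak $W^{1,2}(D, \mathbb{R}^N)$ limit $u^j \rightharpoonup u$; the inclusion $u(\partial D) \subset \Gamma$ passes to the trace and $u$ is $\mathcal{N}$-valued a.e.\ by Rellich. Simultaneously I view the nonnegative Radon measures $\mu^j := |\nabla u^j|^2\, dx$ on $\overline D$ as a bounded sequence in $C^0(\overline D)^*$ and pass to a weak-$*$ subsequential limit $\nu$, with $\nu(\overline D) \leq \liminf E(u^j) \leq E_0$.

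Next I set $S := \{ x \in \overline D : \nu(\{x\}) \geq \varepsilon_1\}$; this is a finite set of cardinality at most $E_0/\varepsilon_1$, which I enumerate as $\{x_1,\dots,x_k\}$. For any $x_0 \in \overline D \setminus S$ I choose a generalized ball $B$ centered at $x_0$, disjoint from $S$, and (after perturbing its radius so $\nu(\partial B) = 0$) with $\nu(\overline B) < \varepsilon_1$. Then $\int_B |\nabla u^j|^2 \to \nu(B) < \varepsilon_1$, so property $(B_0)$ produces harmonic replacements $v^j : \tfrac{1}{8} B \to \mathcal{N}$ --- satisfying the free-boundary condition along $\partial(\tfrac{1}{8}B) \cap \partial D$ when $B$ is a boundary ball --- with $\int_{\frac{1}{8}B} |\nabla u^j - \nabla v^j|^2 \leq 1/j$. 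Since each $v^j$ is (free-boundary) harmonic with energy below $\min\{\varepsilon_{SU}, \varepsilon_F\}$, the $\varepsilon$-regularity of Sacks-Uhlenbeck \cite{Sacks-Uhlenbeck81} in the interior and of Fraser \cite{Fraser00} up to the free boundary delivers uniform $C^{k,\alpha}$ bounds for $\{v^j\}$ on a slightly shrunken concentric ball (say $\tfrac{1}{16}B$). Arzel\`a-Ascoli yields a smooth (free-boundary) harmonic limit $v$, and the $L^2$-gradient bound upgrades this to $u^j \to v$ strongly in $W^{1,2}(\tfrac{1}{16}B)$; uniqueness of weak limits forces $u \equiv v$, so $u$ is (free-boundary) harmonic on a neighborhood of $x_0$.

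Covering any compact $K \subset \overline D \setminus S$ by finitely many such neighborhoods and extracting a diagonal subsequence, I obtain strong $W^{1,2}(K)$-convergence $u^j \to u$, and the limit $u$ is a weakly harmonic map with free boundary on $D \setminus S$. Since $u \in W^{1,2}(D)$ and $S$ is finite --- hence of vanishing $2$-capacity --- I would invoke point-singularity removability for finite-energy weakly harmonic maps in dimension two: interior removability by H\'elein / Sacks-Uhlenbeck, and boundary removability at points of $S \cap \partial D$ via Scheven's reflection, which converts the free-boundary problem into an interior one with antisymmetric potential to which the same removability applies. This promotes $u$ to a weakly harmonic map $(D, \partial D) \to (\mathcal{N}, \Gamma)$ with free boundary on all of $D$; the conclusions $\nu(\overline D) \leq E_0$ and $\nu(\{x_i\}) \geq \varepsilon_1$ are built in.

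The hard part will be the boundary case of the strong-convergence step: when $B$ is a generalized boundary ball, one needs uniform $C^{k,\alpha}$ estimates for the free-boundary harmonic replacements $v^j$ up to the chord portion of $\partial(\tfrac{1}{16}B)$, where the only available condition is orthogonality of $v^j$ to $\Gamma$. Fraser's $\varepsilon$-regularity (via the Sacks-Uhlenbeck $\alpha$-perturbation scheme) is exactly what supplies this, so with that result in hand the argument reduces to the scheme in \cite[Appendix B]{CM08}. A minor technicality is arranging that the radii of the chosen generalized balls avoid the at-most-countable set of concentric spheres carrying positive $\nu$-mass, but this is a standard selection argument.
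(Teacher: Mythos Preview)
Your proposal is correct and follows essentially the same concentration-compactness argument as the paper: weak $W^{1,2}$ and measure limits, a finite singular set, local strong convergence via $(B_0)$ combined with the Sacks--Uhlenbeck/Fraser $\varepsilon$-regularity for the harmonic replacements, identification $u=v$ on each small ball, and removable singularities to extend $u$ across $S$. The only minor differences are that the paper cites Fraser \cite[Theorem~1.10]{Fraser00} directly for boundary removable singularities (rather than going through Scheven's reflection) and makes explicit the partial-boundary Poincar\'e inequality (Lemma~\ref{L:Poincare inequality with partial zero boundary values}) needed to pass from $\|\nabla(u^j-v^j)\|_{L^2}\to 0$ to $\|u^j-v^j\|_{L^2}\to 0$ on boundary half-balls, a step you fold into ``uniqueness of weak limits.''
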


\begin{proof}
After passing to a subsequence we can assume that the sequence of $u^j$'s converges weakly in $W^{1,2}(D)$ to a $W^{1,2}$-map $u:D\to \mc N$, and the measures $\vert\nabla u^j\vert^2 dx$ converges weakly to a limiting measure $\nu$ on $\overline{D}$ with $\nu(\overline{D})\leq E_0$.
\vskip 1.5mm

So there are at most $E_0/\eps_1$ points $x_1,\cdots,x_k \in \overline{D}$, with $\lim_{r\to 0}\nu(B_r(x_j))\geq\eps_1$.\vskip 1.5mm

Next we show that away from these points the convergence is strong in the $W^{1,2}$-norm and $u$ is harmonic with free boundary in $\Gamma$. Given $x\in \overline{D}\backslash \{x_1,\cdots,x_k\}$, then by definition there exist a generalized ball $B_x$ and an integer $J_x$ so that $\int_{B_{x}}\vert\nabla u^j\vert^2<\eps_1$ for $j>J_x$. If $x$ is an interior point we can choose $B_x$ to be a classical ball which is contained in $D$. By condition $(B_0)$ we get a free boundary harmonic replacement $v_x^j:\frac{1}{8}B_{x}\to \mc N$, such that
\[ \int_{\frac{1}{8}B_{x}}\vert\nabla u^j-\nabla v_x^j\vert\leq\frac{1}{j}. \]

Inside the ball $\frac{1}{8}B_{x}(x)$ the energy $E(v_x^j)\leq \eps_1$, so by the $\eps$-regularity estimate by Sacks-Uhlenbeck \cite[Main Estimate 3.2]{Sacks-Uhlenbeck81} and Fraser \cite[Proposition 1.7]{Fraser00}, we get uniform $C^{1,\alpha}$-bound for $v_x^j$ in $\frac{1}{9}B_{x}$ . Hence a subsequence $v_x^j$ converges strongly in $W^{1,2}(\frac{1}{9}B_{x})$ to a harmonic map $v_x: \frac{1}{9}B_{x}\to \mc N$ with free boundary along $v_x(\partial \frac{1}{9}B_{x}\cap \partial D)\subset \Gamma$. By triangle inequality we get
\[ \int_{\frac{1}{9}B_{x}}\vert\nabla u^j-\nabla v_x\vert^2\leq 2\int_{\frac{1}{9}B_{x}}\vert\nabla u^j-\nabla v_x^j\vert^2+2\int_{\frac{1}{9}B_{x}}\vert\nabla v_x^j-\nabla v_x\vert^2\to 0, \]
as $j\to\infty$.
\vskip 1.5mm

We can also derive the $L^2$-convergence of $u^j$ to $v_x$ on $\frac{1}{9}B_{x}$ by the following inequality
\[\int_{\frac{1}{9}B_{x}}\vert u^j-v_x\vert^2\leq 2\int_{\frac{1}{8}B_{x}}\vert u^j-v_x^j \vert^2+2\int_{\frac{1}{9}B_{x}}\vert v^j_x-v_x\vert^2, \]
and the Poincar\'e inequality (when $B_{x}$ is an interior ball of $D$) or its variant Lemma \ref{L:Poincare inequality with partial zero boundary values} (when $B_{x}$ is a boundary ball).
\vskip 1.5mm

Therefore we proved that the sequence $u^j$ converges to $v_x$ strongly in $W^{1,2}(\frac{1}{9}B_{x})$, hence $u=v_x$ in $\frac{1}{9}B_{x}$. So we conclude that $u$ is a free boundary harmonic map on $D\setminus\{x_1,\cdots,x_k\}$. Furthermore, for $K$ relative compact in $D\setminus\{x_1,\cdots,x_k\}$, the $W^{1, 2}(K)$-convergence of $u^j$ to $u$ follows from a covering argument.\vskip 1.5mm

Finally, since $u$ has finite energy, we can apply the removable singularity theorem by Sacks-Uhlenbeck in \cite[Theorem 3.6]{Sacks-Uhlenbeck81} or Fraser in \cite[Theorem 1.10]{Fraser00} at each $x_i$ for interior points and free boundary points respectively. So $u$ extends to a global harmonic map on the whole $D$ with free boundary along $u(\partial D)\subset\Gamma$.
\end{proof}

\subsection{Harmonic maps on half cylinders}
In this and the following subsections, we will generalize the analysis of harmonic maps defined on cylinders in \cite[Appendix B]{CM08} to harmonic maps with free boundary defined on half cylinders. The analysis of harmonic maps and almost harmonic maps on cylinders in \cite{CM08} is essential to the later proof of energy identity. More precisely, in the blow up process (see Section \ref{SS:blowup analysis}), the energy would lose (then the energy identity fails) only when some energy escapes from the ``necks" (modeled by cylinders or half cylinders), and this is the case we want to rule out in our scenario. Hence we need to study carefully the maps defined on the necks, i.e. the cylinders or half cylinders.\vskip 1.5mm

In the free boundary setting, not only spherical bubbles but also disk bubbles may appear during the blow up process. So we need to analyze the  ``necks" between the spheres and disks, i.e. cylinders and half cylinders.\vskip 1.5mm

Let us set up a few notations. We will use $\mc C_{a,b}$ to denote the flat half cylinder $[a,b]\times [0,\pi]$, where $[0, \pi]$ can be viewed as half of a circle ${\bf S}^1$. We will use $(t, \theta)$ as parameters on $\mc C_{a,b}$. Note it is conformally equivalent to the half annulus in plane: $[e^{a},e^{b}]\times [0,\pi]$ under polar coordinates, see Figure \ref{pic2}. We will also abuse the notation $\partial^C \mc C$ to denote the part of the boundary $[a,b]\times \{0,\pi\}$, and later this would be the free boundary part of our maps and we only care about the boundary behavior of the maps along this part. Moreover, when we say a map $u: \mc C\to \mc N$ is a harmonic maps with free boundary on $\Gamma$, we always assume $u\vert_{\partial^C \mc C}$ is the free boundary with $u(\partial^C \mc C)\subset \Gamma$. 
\begin{figure}[ht]
\centering
\ifpdf
  \setlength{\unitlength}{1bp}%
  \begin{picture}(401.13, 86.30)(0,0)
  \put(0,0){\includegraphics{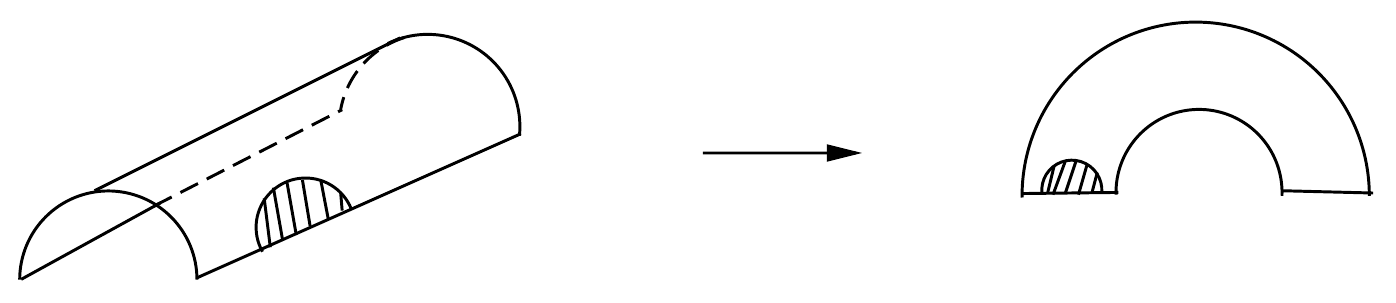}}
  \put(200.48,53.14){\fontsize{9.39}{11.27}\selectfont Conformal}
  \end{picture}%
\else
  \setlength{\unitlength}{1bp}%
  \begin{picture}(401.13, 86.30)(0,0)
  \put(0,0){\includegraphics{2}}
  \put(200.48,53.14){\fontsize{9.39}{11.27}\selectfont Conformal}
  \end{picture}%
\fi
\caption{\label{pic2}%
 }
\end{figure}

\begin{thm}
\label{T:theta energy estimates for free boundary harmonic maps}
Given $\delta>0$, there exist $\eps_2>0$ and $l>1$ depending on $\delta$ so that if $u$ is a non-constant $C^3$-harmonic map from the flat half cylinder $\mc C_{-3l, 3l}$ to $\mc N$ with free boundary along $\Gamma$, and the energy $E(u)\leq\eps_2$, then
\begin{equation}
\label{E:theta energy estimates for free boundary harmonic maps}
\int_{\mc C_{-l,l}}\vert u_\theta\vert^2<\delta\int_{\mc C_{-2l,2l}}\vert\nabla u\vert^2.
\end{equation}
\end{thm}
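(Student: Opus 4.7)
The plan is to adapt the cylinder estimate of Colding--Minicozzi \cite[Appendix B]{CM08} to the half-cylinder setting via a doubling argument across the free boundary $\partial^C \mc C$. The starting point is two orthogonality conditions on $\partial^C \mc C$: since $u$ maps $\partial^C \mc C$ into $\Gamma$, the tangential derivative $u_t$ lies in $T\Gamma$; by the free boundary condition, $u_\theta \perp \Gamma$. Together these give $\langle u_t, u_\theta\rangle \equiv 0$ on $\partial^C \mc C$. I would use this to show that the Hopf differential $\phi := |u_t|^2 - |u_\theta|^2 - 2i\langle u_t, u_\theta\rangle$ (holomorphic in $z = t+i\theta$ since $u$ is harmonic) has $\operatorname{Im}\phi \equiv 0$ on $\partial^C \mc C$. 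Integrating the Cauchy--Riemann identity $\partial_t \operatorname{Re}\phi = \partial_\theta \operatorname{Im}\phi$ in $\theta$ then gives the conservation law
\[
\frac{d}{dt}\int_0^\pi \bigl(|u_t|^2 - |u_\theta|^2\bigr)\,d\theta \;=\; \bigl[-2\langle u_t, u_\theta\rangle\bigr]_{\theta=0}^{\theta=\pi} \;=\; 0,
\]
so that $C_0 := \int_0^\pi(|u_t|^2 - |u_\theta|^2)\,d\theta$ is $t$-independent and satisfies $|C_0| \leq \eps_2/(6l)$.

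The main step is a three-cylinder comparison via a $\theta$-Fourier decomposition. Applying Theorem \ref{e-reg} to a covering of $\mc C_{-5l/2,\,5l/2}$ by (half-)disks yields the pointwise bound $|\nabla u| \leq C\sqrt{\eps_2}$, so the image of $u$ is contained in a single Fermi coordinate chart around a point of $\Gamma$. In those coordinates the free boundary condition translates into Dirichlet conditions on the normal-to-$\Gamma$ components of $u$ at $\theta=0,\pi$ and Neumann conditions on the tangential-to-$\Gamma$ components, which I would expand in $\sin(m\theta)$ and $\cos(m\theta)$ respectively. The harmonic map system reduces, to leading order, to uncoupled linear ODEs $a_m''(t) = m^2 a_m(t)$ for the Fourier coefficients, with a quadratic perturbation from $A(u)(\nabla u,\nabla u)$ controlled by $\eps_2$. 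For $m \geq 1$, the explicit solutions $a_m(t) = A_m e^{mt} + B_m e^{-mt}$ satisfy the three-cylinder inequality
\[
\int_{-l}^l m^2 a_m(t)^2\,dt \;\leq\; C e^{-2ml} \int_{-2l}^{2l} \bigl(m^2 a_m(t)^2 + (a_m'(t))^2\bigr)\,dt,
\]
and summing over $m \geq 1$ together with Parseval yields $\int_{\mc C_{-l,l}}|u_\theta|^2 \leq C e^{-2l} \int_{\mc C_{-2l,2l}} |\nabla u|^2$ at the linear level. Choosing $l$ large so that $C e^{-2l} < \delta$ then closes the estimate, modulo the nonlinear and boundary errors.

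The main obstacle will be handling the boundary terms that appear in the integration-by-parts manipulations used both to derive the conservation law and to isolate the Fourier modes: the two orthogonality conditions eliminate the first-order boundary contributions, but higher-order ones involving the second fundamental forms of $\mc N$ and of $\Gamma$ persist. Here the ``delicate doubling argument'' advertised in the introduction enters: reflecting $u$ across $\partial^C$ via Scheven's construction (as in Section \ref{S:Energy convexity for free boundary weakly harmonic maps}) produces an extension on a full cylinder satisfying an elliptic system with antisymmetric potential, and the $\eps$-regularity up to $\partial^C$ of Theorem \ref{e-reg} provides pointwise derivative control that lets the remaining boundary contributions be absorbed into the quadratically small nonlinear error. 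Choosing $\eps_2$ small enough to close that error estimate and then $l$ large enough that $Ce^{-2l} < \delta$ completes the proof.
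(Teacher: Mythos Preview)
Your conservation law for $\int_0^\pi(|u_t|^2-|u_\theta|^2)\,d\theta$ matches the paper exactly, and your overall plan is workable, but the core mechanism you propose---a $\theta$-Fourier decomposition in Fermi coordinates with mode-by-mode three-cylinder estimates---is genuinely different from what the paper does. The paper never decomposes into Fourier modes and never passes to Fermi coordinates globally. Instead it works directly with the scalar function $f(t)=\int_0^\pi|u_\theta|^2\,d\theta$ and derives a second-order differential inequality $f''(t)\geq \tfrac{1}{4C}f(t)-a$ (Lemma~\ref{L:ODinequality for angular energy}), obtained by differentiating twice and integrating by parts in~$\theta$. The boundary term that appears is $\langle u_\theta,u_{tt}\rangle\big|_0^\pi=\langle u_\theta,A^\Gamma(u_t,u_t)\rangle\big|_0^\pi$, and this is bounded pointwise using the gradient estimate of Theorem~\ref{e-reg} on small half-disks centered on $\partial^C$, giving an error of size $C\sqrt{\eps_2}\cdot\frac{1}{\lambda}\int_{t-\lambda}^{t+\lambda}\int_s|\nabla u|^2$. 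The needed Wirtinger inequality $\int_t|u_\theta|^2\leq C\int_t|u_{\theta\theta}|^2$ is obtained by a single $C^{1,1}$ reflection of $u(t,\cdot)$ across $\Gamma$ via the nearest-point map $\theta\mapsto 2P_\Gamma(u(t,-\theta))-u(t,-\theta)$ (not Scheven's construction), so that $\hat u_\theta$ has zero average on $\mathbf S^1$. An ODE comparison lemma (Lemma~\ref{L:ODE lemma}) then forces either $\max_{[-l,l]}f<8Ca$ or exponential blow-up of $\int f$, the latter contradicting the energy bound for $l$ large.

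Your route would require controlling the cross-coupling between Fourier modes produced by $A(u)(\nabla u,\nabla u)$ and by the non-flatness of the Fermi metric, which is certainly doable under the smallness $|\nabla u|\leq C\sqrt{\eps_2}$ but is more laborious. The paper's approach buys robustness: by aggregating all modes into a single convex function $f$, the nonlinear and boundary errors enter only as additive constants in a scalar ODE inequality, and no mode-by-mode bookkeeping is needed.
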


Roughly speaking, this theorem implies that for non-constant harmonic maps with small energy, the $\theta$-energy (on a sub-half cylinder) is far smaller than the total energy. Then by the Cauchy-Schwartz inequality this implies that the area of the image of $u$ is strictly less than the energy of $u$. \vskip 1.5mm

In order to prove the theorem, we follow the idea of Colding-Minicozzi to prove a differential inequality for various energies of a free boundary harmonic map. In particular we need some free boundary versions of lemmas in \cite[Appendix B]{CM08}.

\begin{lemma}
\label{L:ODinequality for angular energy}
Given a $C^3$-free boundary harmonic map $u$ from $\mc C_{-3l,3l}$ to $\mc N\subset\mb R^N$, with $E(u)\leq\eps_2$ for some small $\eps_2$, then
\begin{equation}
\partial_t^2\int_t\vert u_\theta\vert^2\geq \frac{1}{C}\int_t\vert u_\theta\vert^2 - C\int_t\vert\nabla u\vert^4 - C \sqrt{\eps_2}\frac{1}{\lambda}\int_{t-\lambda}^{t+\lambda}\int_s |\nabla u|^2 ds
\end{equation}
holds for all $t\in[-2l,2l]$ and $\lambda\in(0,1)$. Here $\int_t (\cdot)$ means the integral over the $t$-slice $\{(t, \theta)\in \mc C_{-3l,3l}: \theta\in [0, \pi]\}$, and $C$ is some universal constant depending only on $\mc N$ and $\Gamma$. 
\end{lemma}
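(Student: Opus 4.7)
The plan is to integrate by parts in $\theta$ using the harmonic map equation and carefully track the boundary contributions—absent from the closed cylinder case of \cite[Appendix~B]{CM08}—produced by non-periodicity on $[0,\pi]$. Setting $F(t) := \int_0^\pi |u_\theta|^2\,d\theta$, two differentiations in $t$ under the integral combined with one integration by parts in $\theta$ and the substitution $u_{tt} = -u_{\theta\theta} - A(u)(\nabla u,\nabla u)$ yield
\begin{equation*}
F''(t) = 2\int_0^\pi |u_{t\theta}|^2 + 2\int_0^\pi |u_{\theta\theta}|^2 + 2\int_0^\pi \langle A(u)(\nabla u,\nabla u),\, u_{\theta\theta}\rangle\,d\theta + 2\bigl[\langle u_{tt},u_\theta\rangle\bigr]_{\theta=0}^{\theta=\pi}.
\end{equation*}

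The interior terms are handled as in \cite{CM08}: the nonlinear error is $O(|\nabla u|^4)$ because $A(u)(\cdot,\cdot)$ is $\mc N$-normal so only $(u_{\theta\theta})^\perp = A(u)(u_\theta,u_\theta)$ pairs nontrivially with it; and $\int|u_{\theta\theta}|^2$ is bounded below by a multiple of $\int|u_\theta|^2$ via a one-dimensional Poincar\'e estimate on $[0,\pi]$, namely $\int_0^\pi |v|^2 \leq 2\pi^2\int_0^\pi|v_\theta|^2 + 2\pi|v(0)|^2$ applied to $v=u_\theta$ (the analogue of Wirtinger in the closed case, which is unavailable here since $\int_0^\pi u_\theta \,d\theta = u(t,\pi)-u(t,0)\neq 0$). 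The resulting trace slack $|u_\theta(t,0)|^2$ will be reabsorbed in the last step below, together with the boundary contribution.

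The boundary contributions are treated via the free boundary geometry, in the spirit of the orthogonality identities used in the proof of Theorem~\ref{T:convexity2} and in \cite{Zhou16}. Since $t\mapsto u(t,0)$ is a curve in $\Gamma$, $u_t(t,0)\in T\Gamma$ and the acceleration $u_{tt}(t,0)$ decomposes along $\mathbf{R}^N = T\Gamma \oplus (T\Gamma)^\perp$ as a $T\Gamma$-piece plus $II_\Gamma(u_t,u_t)$ (tangent to $\mc N$, normal to $\Gamma$) plus $A(u)(u_t,u_t)$ (normal to $\mc N$). By the free boundary condition $u_\theta(t,0)\perp T\Gamma$ within $T\mc N$, only the $II_\Gamma$-piece survives in the pairing, giving $|\langle u_{tt},u_\theta\rangle|_{\theta=0,\pi} \leq C|u_t|^2|u_\theta|$. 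I would then apply the $\epsilon$-regularity of Theorem~\ref{e-reg} (valid on $\mc C_{-3l,3l}$ via its conformal identification with a half annulus, uniformly for $t\in[-2l,2l]$ since the distance to the ends is $\geq l$) to obtain the pointwise bound $|u_\theta|(t,\theta_0)\leq C\sqrt{\eps_2}$, so that the boundary term is controlled by $C\sqrt{\eps_2}\,|\nabla u|^2(t,\theta_0)$ at the two corners.

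Finally, to convert the pointwise boundary value $|\nabla u|^2(t,\theta_0)$ into the averaged quantity $\frac{1}{\lambda}\int_{t-\lambda}^{t+\lambda}\int_s|\nabla u|^2\,ds$, I would invoke a mean value inequality for $|\nabla u|^2$ after Scheven's reflection across $\Gamma$, which turns the free boundary into an interior point of the extended elliptic system, combined with the Bochner inequality $\Delta|\nabla u|^2 \geq -C|\nabla u|^4$. The uniform smallness $|\nabla u|^2\leq C\eps_2$ from $\epsilon$-regularity makes $|\nabla u|^2$ a nonnegative nearly-subharmonic function, yielding the desired slab average bound up to an admissible lower-order perturbation; the same estimate controls the Poincar\'e slack $|u_\theta(t,0)|^2$. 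The main obstacle is precisely this last step: achieving the $\lambda$-dependence consistent with the statement requires simultaneously using the reflection to leave the free boundary behind, the $\epsilon$-regularity to guarantee that the Bochner nonlinearity only mildly perturbs subharmonicity, and a careful choice of comparison ball respecting the cylinder geometry—all features that illustrate how the free boundary $\epsilon$-regularity of Theorem~\ref{e-reg} is the central tool throughout.
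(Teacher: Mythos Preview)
Your overall architecture (two $t$-derivatives, one $\theta$-integration by parts, harmonic map equation, geometric analysis of the new boundary term) matches the paper's proof exactly, and your treatment of the boundary term $\langle u_{tt},u_\theta\rangle|_0^\pi$ via the $II_\Gamma$ decomposition is the same as the paper's. For the conversion of the pointwise corner value $|\nabla u|^2(t,0)$ into a slab average, the paper simply invokes Theorem~\ref{e-reg} on the half disk of radius~$\lambda$ centered at $(t,0)$; your Scheven-reflection-plus-Bochner route is a more elaborate way to reach the same mean-value type bound, and is not needed since the $\epsilon$-regularity already delivers it in one line.

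There is, however, a genuine gap in your replacement of Wirtinger. Your one-dimensional Poincar\'e $\int_0^\pi|u_\theta|^2 \leq 2\pi^2\int_0^\pi|u_{\theta\theta}|^2 + 2\pi|u_\theta(t,0)|^2$ produces the trace slack $|u_\theta(t,0)|^2$ \emph{without any small coefficient}. Bounding it by $|\nabla u|^2(t,0)$ and then by either $C\eps_2$ (scale~$1$) or $C\lambda^{-2}\int_{D^+_\lambda}|\nabla u|^2$ (scale~$\lambda$) yields an error term that is of the same order as $F(t)$ or $E(u)$ itself, with no factor of $\sqrt{\eps_2}$ in front. When you feed this into the proof of Theorem~\ref{T:theta energy estimates for free boundary harmonic maps}, the ODE comparison breaks: the constant $a$ in Lemma~\ref{L:ODE lemma} is no longer small relative to the total energy, and the conclusion $\int_{\mc C_{-l,l}}|u_\theta|^2 < \delta\,E(u)$ fails for maps whose energy is much smaller than $\eps_2$. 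The paper avoids this precisely by \emph{using the free boundary condition a second time} in this step: it reflects $u(t,\cdot)$ across $\Gamma$ via the nearest-point projection $P_\Gamma$ to obtain $\hat u(t,\cdot)$ on $\mathbf S^1$; the orthogonality $u_\theta\perp\Gamma$ makes $\hat u$ genuinely $C^{1,1}$, so $\int_{\mathbf S^1}\hat u_\theta=0$ and the classical Wirtinger inequality applies, giving $\int_0^\pi|u_\theta|^2 \leq C\int_0^\pi|u_{\theta\theta}|^2 + C\int_0^\pi|u_\theta|^4$. The quartic error \emph{does} carry the needed smallness via $|u_\theta|^2\leq C\eps_2$ and is absorbed. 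In short, your proposal uses the free boundary geometry only at the integration-by-parts boundary term, while the paper uses it there \emph{and} in the Wirtinger substitute; the second use is what makes the inequality close.
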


\begin{proof}
We can first assume $\eps_2\leq \eps_0$ (as in Theorem \ref{e-reg}). Differentiating $\int_t\vert u_\theta\vert^2$ twice gives
\begin{align}
\frac{1}{2}\partial_t^2\int_t\vert u_\theta\vert^2&=\int_t\vert u_{t\theta}\vert^2+\int_t\langle u_\theta,u_{tt\theta}\rangle\notag\\
&=\int_t\vert u_{t\theta}\vert^2-\int_t\langle u_{\theta\theta},u_{tt}\rangle+\langle u_\theta, u_{tt} \rangle \vert_0^\p\notag\\
&=\int_t\vert u_{t\theta}\vert^2-\int_t\langle u_{\theta\theta},(\Delta u-u_{\theta\theta})\rangle + \langle u_\theta, A^\Gamma(u_t, u_t)\rangle\vert_0^\pi\\
&\geq \int_t\vert u_{t\theta}\vert^2+\int_t\vert u_{\theta\theta}\vert^2-\sup_{\mc N}\vert A^{\mc N}\vert\int_t\vert u_{\theta\theta}\vert\vert\nabla u\vert^2 - 2\sup_\Gamma |A^\Gamma|\, C\sqrt{\eps_2} \frac{1}{\lambda}\int_{t-\lambda}^{t+\lambda}\int_s |\nabla u|^2 ds.\notag
\end{align}
Here $A^\Gamma$ and $A^{\mc N}$ are the second fundamental forms of $\Gamma\hookrightarrow\mc N$ and $\mc N\hookrightarrow \mb R^N$. In the second equality we used integration by parts; in the third inequality since $u$ is a free boundary map, on the boundary (where $\theta=0$ or $\theta=\pi$) $u_{\theta}$ is a tangent vector of $\mc N$ and also perpendicular to $\Gamma$, and $u_{tt}=\nabla^{\mc N}_{u_t}u_t$ when projected to the tangent space $T\mc N$, where $u_t$ is tangent to $\Gamma$; in the last inequality we used $\vert\Delta u\vert\leq \sup_{\mc N}\vert A^{\mc N}\vert \vert\nabla u\vert^2$ by the harmonic map equation (\ref{E:harmonic map equation}), and also the gradient estimate (Theorem \ref{e-reg}) applied to half disks of radius $\lambda$ centered at $(t, 0)$ and $(t, \pi)$. By the Cauchy-Schwartz inequality we get
\[\frac{1}{2}\partial_t^2\int_t\vert u_\theta\vert^2\geq \int_t\vert u_{t\theta}\vert^2+\frac{3}{4}\int_t\vert u_{\theta\theta}\vert^2- C\int_t\vert\nabla u\vert^4 - C \sqrt{\eps_2} \frac{1}{\lambda}\int_{t-\lambda}^{t+\lambda}\int_s |\nabla u|^2 ds, \]
for some universal constant $C$ depending only on $\mc N$ and $\Gamma$.\vskip 1.5mm

We will prove the following statement: for $u$ satisfying the assumption of the theorem,\begin{equation}
\label{E:estimates of angular energy by integral of twice derivative}
\int_t\vert u_{\theta}\vert^2\leq C \int_t\vert u_{\theta\theta}\vert^2, \text{ for } t\in[-2l,2l],
\end{equation}
where $C>0$ is a constant depending only on $\mc N$ and $\Gamma$. Note that once we have this inequality we get the desired inequality in the lemma.\vskip 1.5mm

To prove this inequality, we reflect the map $u(t, \cdot): [0, \pi]\to \mb R^N$ across $\Gamma$ to obtain a map defined on the circle $\hat u(t, \cdot): \mb S^1\to \mb R^N$. In particular, denote $P_\Gamma$ as the nearest point projection map from a tubular neighborhood of $\Gamma$ in $\mb R^N$ to $\Gamma$. When the tubular neighborhood is chosen small enough, we can assume that
\[ |DP_\Gamma|\leq 1, \text{ and } |D^2P_\Gamma|\leq C, \]
for some universal constant $C>0$. By the gradient estimate Theorem \ref{e-reg}, we can assume the image of $u(\mc C_{-2l ,2l})$ lie in this tubular neighborhood when $\eps_2$ is chosen small enough. Now define the map $\hat u(t, \cdot): \mb S^1\to \mb R^N$ by
\[ \hat u(t, \theta)=\begin{cases} u(t, \theta), & \text{ when } \theta\in [0, \pi]\\ 
                                                   2P_\Gamma(u(t, -\theta))-u(t, -\theta), & \text{ when } \theta\in [-\pi, 0].
                                                   \end{cases}\]
By the free boundary assumption, we know that $\hat u$ is $C^{1, 1}$ on $\mb S^1$. As $\int_{\mb S^1} \hat u_\theta =0$, by the Wirtinger inequality, we get $\int_{\mb S^1} \vert \hat u_\theta\vert^2\leq \int_{\mb S^1} \vert \hat u_{\theta\theta}\vert^2$, and hence we can deduce that
\begin{equation}
\begin{split}
\int_0^\pi \vert u_\theta\vert^2  & \leq \int_0^\pi \vert u_{\theta\theta}\vert^2 + \int_0^\pi \vert (2P_\Gamma(u(t, \theta))-u(t, \theta))_{\theta\theta}\vert^2\\
                                                 &\leq \int_0^\pi \vert u_{\theta\theta}\vert^2 + \int_0^\pi \vert 2DP_\Gamma(u_{\theta\theta})-u_{\theta\theta}+2D^2P_\Gamma(u_\theta, u_\theta) \vert^2\\
                                                 &\leq C \int_0^\pi \vert u_{\theta\theta}\vert^2+ C \int_0^\pi |u_{\theta}|^4.
\end{split}
\end{equation}
Note that by the gradient estimates Theorem \ref{e-reg}, $C \int_0^\pi |u_{\theta}|^4\leq C^2 \eps_2\int_0^\pi |u_\theta|^2$. So the desired estimates (\ref{E:estimates of angular energy by integral of twice derivative}) follows by taking $\eps_2$ small enough, so that $C^2\eps_2<\frac{1}{2}$.
\end{proof}

Next lemma is an ODE comparison lemma.

\begin{lemma}\label{L:ODE lemma}
Suppose $f$ be a nonnegative $C^2$ function on $[-2l,2l]\subset\mb R$ satisfying
\begin{equation}\label{E:ODE in ODE lemma}
f''\geq \frac{1}{4C}f-a,
\end{equation}
for some constants $C, a>0$. If $\max_{[-l,l]}f\geq 8 C a$, then
\begin{equation}
\int_{-2l}^{2l}f\geq 4\sqrt{2C}a\sinh(2\sqrt{C}l).
\end{equation} 
\end{lemma}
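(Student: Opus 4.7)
The plan is to shift the unknown so that the differential inequality becomes homogeneous, locate where the hypothesized large value of $f$ must sit, and then convert the second-order inequality into a first-order exponential lower bound via two successive integrating-factor steps, which is integrated at the end.

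Setting $k := 1/(4C)$ and $h := f - 4Ca = f - a/k$, the hypothesis $f'' \geq \frac{1}{4C}f - a$ becomes the homogeneous inequality $h'' \geq kh$ on $[-2l,2l]$, while the assumption $\max_{[-l,l]} f \geq 8Ca$ reads $\max_{[-l,l]} h \geq 4Ca$. At any interior maximum $t_\ast$ of $h$ on $[-l,l]$ one would have $0 \geq h''(t_\ast) \geq k h(t_\ast) \geq 4Ca\cdot k = a > 0$, which is impossible. Hence the maximum is attained at one of the endpoints $\pm l$; after reflecting if necessary, assume it occurs at $t_0 = l$, in which case $h'(l) \geq 0$ because $h$ is $C^1$ and is pointwise dominated by $h(l)$ on $[-l,l]$.

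Next, introduce the first-order quantity $\psi := h' + \sqrt{k}\,h$. Using $h'' \geq k h$,
\[\psi' = h'' + \sqrt{k}\,h' \;\geq\; k h + \sqrt{k}\,h' \;=\; \sqrt{k}\,\psi,\]
so the integrating factor $e^{-\sqrt{k}t}$ gives $(\,e^{-\sqrt{k}t}\psi(t)\,)' \geq 0$, and consequently $\psi(t) \geq \psi(l)\,e^{\sqrt{k}(t-l)}$ on $[l,2l]$. Since $\psi(l) \geq 0 + \sqrt{k}\cdot 4Ca$, this reads $h'(t) + \sqrt{k}\,h(t) \geq 4\sqrt{k}\,Ca\,e^{\sqrt{k}(t-l)}$. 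A second integrating-factor step (multiply by $e^{\sqrt{k}t}$ and integrate from $l$ to $t$) yields
\[h(t) \;\geq\; h(l)\,e^{-\sqrt{k}(t-l)} + 4Ca\,\sinh\!\bigl(\sqrt{k}(t-l)\bigr) \;\geq\; 4Ca\,\cosh\!\bigl(\sqrt{k}(t-l)\bigr)\]
on $[l,2l]$, where the last step uses $h(l) \geq 4Ca$ together with the identity $e^{-x} + \sinh x = \cosh x$.

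Finally, since $f \geq 0$ on $[-2l,l]$ and $f = h + 4Ca \geq 4Ca\cosh(\sqrt{k}(t-l))$ on $[l,2l]$,
\[\int_{-2l}^{2l} f\,dt \;\geq\; \int_l^{2l} 4Ca\,\cosh\!\bigl(\sqrt{k}(t-l)\bigr)\,dt \;=\; \frac{4Ca}{\sqrt{k}}\,\sinh(\sqrt{k}\,l),\]
and substituting $\sqrt{k} = 1/(2\sqrt{C})$ produces a lower bound of the form stated in the lemma (up to the precise shape of the prefactor). The main obstacle is the location-of-maximum step: the naive maximum principle for inequalities of the type $w'' \geq kw$ with $k > 0$ fails in general (e.g.\ $w = -\cos t$ on $[-\pi/2,\pi/2]$ satisfies $w'' \geq w$ with $w = 0$ on the boundary but $w < 0$ inside), so one must genuinely exploit that the hypothesized max forces both $h(l) \geq 4Ca$ and $h'(l) \geq 0$. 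This is exactly what guarantees $\psi(l) \geq 4\sqrt{k}\,Ca > 0$ and lets the integrating-factor chain propagate the exponential growth all the way across $[l,2l]$.
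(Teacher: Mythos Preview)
Your argument is correct and self-contained: the shift $h=f-4Ca$, the endpoint-maximum step (which is sound because the interior-maximum contradiction uses $h(t_\ast)\geq 4Ca>0$), and the two integrating-factor passes all go through. You end with $\int_{-2l}^{2l} f \geq 8C^{3/2}a\,\sinh\!\bigl(l/(2\sqrt{C})\bigr)$, which is a genuine lower bound of the required shape $c_1(C)\,a\,\sinh(c_2(C)\,l)$.

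The paper takes a different, much shorter route: it rescales $\tilde f(t)=f(t/\sqrt{4C})$ to normalize the coefficient in the differential inequality to $1$ and then invokes \cite[Lemma~B.4]{CM08} directly, so all the analytic work is outsourced. Note that the paper's rescaling as written is off (with $\tilde f(t)=f(t/\sqrt{4C})$ one gets $\tilde f''\geq \tfrac{1}{16C^2}\tilde f - a/(4C)$, not $\tilde f''\geq \tilde f-4Ca$), so the stated constants in the lemma are themselves not literally what a correct rescaling produces; in particular the ``natural'' exponential rate for $f''\geq \tfrac{1}{4C}f-a$ is $1/(2\sqrt{C})$, matching your $\sinh(l/(2\sqrt{C}))$ rather than the paper's $\sinh(2\sqrt{C}\,l)$. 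For the only use of the lemma (in the proof of Theorem~\ref{T:theta energy estimates for free boundary harmonic maps}) this is harmless: one only needs that the right-hand side, divided by $l$, tends to infinity as $l\to\infty$, which both versions deliver. Your approach has the advantage of being explicit and not relying on an external reference; the paper's has the advantage of brevity once the cited lemma is granted.
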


\begin{proof}
Let $\tilde f(t)=f(t/\sqrt{4C})$, then we get a differential inequality of $\tilde f$:
\[\tilde f''\geq \tilde f-4Ca,\]
where $\tilde f$ is defined on $[-4\sqrt{C}l,4\sqrt{C}l]$, and $\max_{[-2\sqrt{C}l,2\sqrt{C}l]}\tilde f\geq 8Ca$. Then applying \cite[Lemma B.4]{CM08} gives
\[\int_{-4\sqrt{C}l}^{4\sqrt{C}l}\tilde{f}\geq 8\sqrt{2}Ca\sinh (2\sqrt{C}l),\]
which implies
\[\int_{-2l}^{2l}f\geq 4\sqrt{2C}a\sinh(2\sqrt{C}l).\qedhere\]
\end{proof}

Now we can prove the main theorem of this subsection.

\begin{proof}[Proof of Theorem \ref{T:theta energy estimates for free boundary harmonic maps}]
First we analyze $\int_t(\vert u_t\vert^2-\vert u_\theta\vert^2)$. Differentiating it and applying integration by parts gives
\begin{equation}
\begin{split}
\frac{1}{2}\partial_t \int_t(\vert u_t\vert^2-\vert u_\theta\vert^2)&=\int_t(\langle u_t, u_{tt}\rangle-\langle u_\theta,u_{\theta t}\rangle)\\
&=\int_t\langle u_t,u_{tt}+u_{\theta\theta}\rangle \,=\, 0.
\end{split}
\end{equation} 
Here again we use the fact that $u_\theta$ and $u_t$ are perpendicular to each other on the free boundary, and $u_{tt}+u_{\theta\theta}=\Delta u$ which is normal to $\mc N$ and hence perpendicular to $u_t$. Thus $\int_t(\vert u_t\vert^2-\vert u_\theta\vert^2)$ is a constant, and
\begin{equation}
\int_t(\vert u_t\vert^2-\vert u_\theta\vert^2)=\frac{1}{4l}\int_{\mc C_{-2l,2l}}(\vert u_t\vert^2-\vert u_{\theta}\vert^2)\leq \frac{1}{4l}\int_{\mc C_{-2l,2l}}\vert\nabla u\vert^2.
\end{equation}

Moreover, we get

\begin{equation}
\int_t\vert\nabla u\vert^2=2\int_t \vert u_\theta\vert^2+\int_t(\vert u_t\vert^2-\vert u_\theta\vert^2)\leq 2\int_t\vert u_\theta\vert^2+ \frac{1}{4l}\int_{\mc C_{-2l,2l}}\vert\nabla u\vert^2.
\end{equation}

Let us choose $\eps_2$ smaller than $\eps_0$ for $\eps_0$ in \cite[Lemma 3.4]{Sacks-Uhlenbeck81} and Theorem \ref{e-reg}. Then the interior gradient estimates for harmonic maps (see \cite[Lemma 3.4]{Sacks-Uhlenbeck81}) and for free boundary harmonic maps (Theorem \ref{e-reg}) imply that
\begin{equation}
\sup_{\mc C_{-2l, 2l}}\vert\nabla u\vert^2\leq C\eps_2.
\end{equation}

Let $f(t)=\int_t\vert u_\theta\vert^2$, then by Lemma \ref{L:ODinequality for angular energy} we get
\begin{align}
f''(t) & \geq \frac{1}{C}f(t) - C \eps_2 \int_t\vert\nabla u\vert^2 - C \sqrt{\eps_2}\frac{1}{\lambda} \int_{t-\lambda}^{t+\lambda}\int_s |\nabla u|^2 ds\notag\\
       & \geq \frac{1}{C}f(t)-2C\eps_2f(t)-2C\sqrt{\eps_2}\frac{1}{\lambda}\int_{t-\lambda}^{t+\lambda}f(t)-\frac{C\eps_2}{4l}\int_{\mc C_{-2l,2l}}\vert\nabla u\vert^2-\frac{2C\sqrt{\eps_2}}{4l}\int_{\mc C_{-2l,2l}}\vert\nabla u\vert^2\notag\\
       & \geq \frac{1}{2C}(f(t)-\frac{1}{4\lambda}\int_{t-\lambda}^{t+\lambda}f)-\frac{C\sqrt{\eps_2}}{l}\int_{\mc C_{-2l,2l}}\vert\nabla u\vert^2,
\end{align}
where $C>0$ depends only on $\mc N$ and $\Gamma$, and we can further assume that $\eps_2$ is small enough so that $C^2\eps_2\leq 1/4$ and $2C^2\eps_2^{1/4}\leq 1$.

Let $\lambda\to 0$, by continuity of $f$ we get the differential inequality
\begin{equation}
f''(t)\geq \frac{1}{4C}f(t) -a,
\end{equation}
where $a= \frac{C\sqrt{\eps_2}}{l}\int_{\mc C_{-2l,2l}}\vert\nabla u\vert^2$. Then we apply Lemma \ref{L:ODE lemma} to get either
\begin{equation}
\max_{[-l,l]}f<8\frac{C^2\sqrt{\eps_2}}{l}\int_{\mc C_{-2l,2l}}\vert\nabla u\vert^2,
\end{equation}
or
\begin{equation}
\int_{\mc C_{-2l,2l}}\vert u_\theta\vert^2=\int_{-2l}^{2l}f(t)dt\geq  4\sqrt{2C}C\sqrt{\eps_2}\frac{\sinh(2\sqrt{C}l)}{l}\int_{\mc C_{-2l,2l}}|\nabla u|^2.
\end{equation}

If we choose $l$ large enough then the second inequality can not hold. Then we get
\begin{equation}
\int_{\mc C_{-l,l}}\vert u_\theta\vert^2\leq 2l\max_{[-l,l]} f<8C^2\sqrt{\eps_2}\int_{\mc C_{-2l,2l}}\vert\nabla u\vert^2.
\end{equation}

Then the inequality (\ref{E:theta energy estimates for free boundary harmonic maps}) holds if $8C^2\sqrt{\eps_2}<\delta$. So we can choose $\eps_2$ small and then choose $l$ large to get the desired inequality.
\end{proof}

\subsection{Almost Harmonic Maps on Half Cylinders}

The main results in this part generalize the results in the previous subsection to almost harmonic maps on half cylinders. \vskip 1.5mm

Let us first fix some notation. Given a half cylinder $\mc C_{r_1,r_2}$, we will view it as its conformally equivalent half annulus $D^+_{e^{r_2}}\setminus D^+_{e^{r_1}}\subset D^+$. A generalized ball $B\subset D^+_{e^{r_2}}\setminus D^+_{e^{r_1}}$ is either a ball in the interior of the annulus $D^+_{e^{r_2}}\setminus D^+_{e^{r_1}}$, or is a half ball centered along the chord boundary $\partial^C_{e^{r_2}}\setminus \partial^C_{e^{r_1}}$. When $B$ is a half ball, we denote $\partial^C B=\partial B\cap \partial^C$. Note that this definition is the same as that in Definition \ref{D:generalized balls}.

\begin{defi} 
Given $\mu>0$ and a half cylinder $\mc C_{r_1,r_2}$, we will say that a $W^{1,2}$-map $u: \mc C_{r_1,r_2} \to \mc N$ with $u(\partial^C \mc C)\subset \Gamma$ is {\em $\mu$-almost harmonic with free boundary} if for any finite collection of disjoint generalized balls $\mc B$ in the conformally equivalent half annulus $D^+_{e^{r_2}}\setminus D^+_{e^{r_1}}$, there is a free boundary harmonic replacement $v:\cup_{\mc B}\frac{1}{8}B\to \mc N$ with free boundary along $u\left(\cup_\mc B\frac{1}{8}\partial^C B\right)\subset \Gamma$, such that
\[\int_{\cup_{\mc B}\frac{1}{8}B}\vert\nabla u-\nabla v\vert^2\leq\frac{\mu}{2}\int_{\mc C_{r_1,r_2}}\vert\nabla u\vert^2.\]
\end{defi}

The first lemma of this subsection shows that for almost harmonic maps with free boundary the estimate in previous subsection still holds.

\begin{lemma}
\label{L:theta energy estimates for free boundary almost harmonic maps}
Given $\delta>0$ there exists $\mu>0$ depending on $\delta$, $\mc N$ and $\Gamma$ so that if $u: \mc C_{-3l,3l} \to \mc N$ is a $\mu$-almost harmonic map with free boundary, and $E(u)\leq \eps_2$, then
\begin{equation}
\int_{\mc C_{-l,l}}\vert u_\theta\vert^2\leq\delta\int_{\mc C_{-3l,3l}}\vert\nabla u\vert^2.
\end{equation}
\end{lemma}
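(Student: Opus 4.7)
My strategy is proof by contradiction combined with the compactness result of the previous subsection (Theorem \ref{T:Compactness of free boundary almost harmonic maps}) applied to the conformally equivalent half annulus, followed by an appeal to the harmonic case (Theorem \ref{T:theta energy estimates for free boundary harmonic maps}). First, fix $\delta>0$ and choose $\eps_2$, $l$ from Theorem \ref{T:theta energy estimates for free boundary harmonic maps} applied with threshold $\delta/2$, further shrinking $\eps_2$ if necessary so that $\eps_2<\eps_1$, where $\eps_1$ is the threshold in Theorem \ref{T:Compactness of free boundary almost harmonic maps} that rules out energy concentration. Suppose no $\mu$ works; then there is a sequence of $\mu_j$-almost harmonic maps $u^j:\mc C_{-3l,3l}\to\mc N$ with free boundary, $\mu_j\to 0$, $E(u^j)\leq\eps_2$, and $\int_{\mc C_{-l,l}}|u^j_\theta|^2>\delta F_j$, writing $F_j:=E(u^j)$.

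Identifying the half cylinder with its half annulus image under $(t,\theta)\mapsto(e^t\cos\theta,e^t\sin\theta)$, the $\mu_j$-almost harmonic condition implies property $(B_0)$ of Definition~\ref{D:almost harmonic with free boundary} along the sequence (with $\mu_j F_j/2$ playing the role of $1/j$). Theorem \ref{T:Compactness of free boundary almost harmonic maps} therefore yields, after passing to a subsequence, a weak $W^{1,2}$-limit $u_\infty$ that is a smooth free boundary harmonic map on $\mc C_{-3l,3l}$, with strong $W^{1,2}$-convergence on any compact subset of the open half cylinder (no bubble points can form since $E(u^j)\leq\eps_2<\eps_1$). If, along a further subsequence, $F_j\to F_\infty>0$, then strong convergence on both $\mc C_{-l,l}$ and $\mc C_{-2l,2l}$ combined with the failure hypothesis gives
\[
\int_{\mc C_{-l,l}}|u_{\infty,\theta}|^2\;\geq\;\delta F_\infty\;\geq\;\delta\int_{\mc C_{-2l,2l}}|\nabla u_\infty|^2,
\]
in particular forcing $u_\infty$ to be non-constant. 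Theorem \ref{T:theta energy estimates for free boundary harmonic maps} applied to $u_\infty$ with threshold $\delta/2$ then yields the strict upper bound $(\delta/2)\int_{\mc C_{-2l,2l}}|\nabla u_\infty|^2$, a contradiction.

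The remaining case $F_j\to 0$ requires a rescaling. Pick $p_0\in\partial^C\cap\mc C_{-3l,3l}$ and set $c_j:=u^j(p_0)\in\Gamma$, and define $\tilde u^j:=(u^j-c_j)/\sqrt{F_j}$, so $\int_{\mc C_{-3l,3l}}|\nabla\tilde u^j|^2=1$ and $\tilde u^j(p_0)=0$. After subsequences, $c_j\to c_\infty\in\Gamma$ and $\tilde u^j\rightharpoonup\tilde u_\infty$ weakly in $W^{1,2}$ with strong convergence in $L^2_{\text{loc}}$. Rescaling the free boundary harmonic replacements $v^j$ provided by the $\mu_j$-almost harmonic condition gives $\tilde v^j$ with $\int|\nabla\tilde u^j-\nabla\tilde v^j|^2\leq\mu_j/2\to 0$ on the replacement domains, while the equation $-\Delta\tilde v^j=\sqrt{F_j}\,A(v^j)(\nabla\tilde v^j,\nabla\tilde v^j)$ shows the nonlinearity vanishes in $W^{-1,2}$ as $\sqrt{F_j}\to 0$. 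Thus $\tilde u_\infty$ is a weak solution of $\Delta\tilde u_\infty=0$ taking values in $T_{c_\infty}\mc N$, and since $\tilde u^j(\partial^C)\subset(\Gamma-c_j)/\sqrt{F_j}$ converges (in the appropriate local Hausdorff sense) to the linear subspace $T_{c_\infty}\Gamma$, the limit satisfies $\tilde u_\infty(\partial^C)\subset T_{c_\infty}\Gamma$ with the corresponding free boundary (Neumann along the normal bundle of $T_{c_\infty}\Gamma$ inside $T_{c_\infty}\mc N$). Reflecting across $T_{c_\infty}\Gamma$ produces a Euclidean harmonic function on the doubled cylinder, and a standard Fourier/separation-of-variables analysis shows the angular modes decay exponentially away from the ends, giving $\int_{\mc C_{-l,l}}|\tilde u_{\infty,\theta}|^2\leq Ce^{-2l}\int_{\mc C_{-2l,2l}}|\nabla\tilde u_\infty|^2<(\delta/2)$ once $l$ is taken large (compatibly with Theorem \ref{T:theta energy estimates for free boundary harmonic maps}). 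Strong $L^2$-convergence on $\mc C_{-l,l}$ then contradicts the normalized failure $\int|\tilde u^j_\theta|^2>\delta$.

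The main obstacle is the $F_j\to 0$ regime: passing the almost harmonic condition through the rescaling by $1/\sqrt{F_j}$ and carefully verifying that the free boundary datum on the rescaled sequence converges to the linear free boundary condition on $T_{c_\infty}\Gamma$ (so that the Fourier argument in the tangent space applies) is the delicate technical point. Everything else --- the compactness step, the reduction to the harmonic estimate, and the exponential decay of angular modes for the linear limit --- follows essentially from tools already available in the paper.
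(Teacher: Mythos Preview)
Your argument follows the same contradiction scheme as the paper's proof --- split into $\limsup E(u^j)>0$ versus $E(u^j)\to 0$, apply compactness in the first case, rescale in the second --- and is essentially correct, with one slip to fix. In the degenerate case your closing line appeals to ``strong $L^2$-convergence on $\mc C_{-l,l}$'' to pass $\int|\tilde u^j_\theta|^2>\delta$ to the limit, but $L^2$-convergence of the maps does not control $\int|\partial_\theta\tilde u^j|^2$; weak $W^{1,2}$-convergence gives lower semicontinuity in the wrong direction. You need strong convergence of $\nabla\tilde u^j$ on compact subsets. The ingredients for this are already in your sketch (the rescaled replacements $\tilde v^j$ satisfy an equation with vanishing nonlinearity and are $W^{1,2}$-close to $\tilde u^j$, so uniform gradient estimates on $\tilde v^j$ upgrade the convergence), but the step must be stated.

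The paper handles the $E(u^j)\to 0$ case more economically: it observes that the $\mu$-almost harmonic property is invariant under the rescaling, so the normalized maps are still $1/j$-almost harmonic into the dilated pairs $(\mc N_j,\Gamma_j)$; it then reapplies the compactness theorem verbatim to obtain strong $W^{1,2}$-convergence on compact subsets, and finally invokes Theorem~\ref{T:theta energy estimates for free boundary harmonic maps} directly for the flat limiting pair $(\mb R^n,\mb R^k)$, noting (cf.\ \cite[Remark B.7]{CM08}) that for a linear target the smallness hypothesis $E\leq\eps_2$ is unnecessary. Your explicit reflection/Fourier argument for the linear limit is correct but effectively reproves that last observation by hand.
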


\begin{proof}
We will argue by contradiction. Suppose the lemma does not hold, then there exists a sequence $u^j$ of $\frac{1}{j}$-almost harmonic maps from $\mc C_{-3l,3l}$ to $\mc N$ with free boundary $u_j(\partial^C \mc C_{-3l, 3l})\subset \Gamma$, $E(u^j)\leq\eps_2$, and
\begin{equation}
\label{E:contradiction sequence for almost harmonic maps}
\int_{\mc C_{-l,l}}\vert u^j_\theta\vert^2>\delta\int_{\mc C_{-3l,3l}}\vert\nabla u^j\vert^2.
\end{equation}

Now we have two cases depending on whether the energy of limit is zero.\vskip 0.5em

{\bf Case 1:} Suppose $\limsup_{j\to\infty}E(u^j)>0$, then up to a subsequence $\int_{\mc C_{-l,l}}\vert u^j_\theta\vert^2$ is uniformly bounded from below by (\ref{E:contradiction sequence for almost harmonic maps}). We will apply the compactness result (Theorem \ref{T:Compactness of free boundary almost harmonic maps}) to this sequence. In particular, we can use the same argument as Theorem \ref{T:Compactness of free boundary almost harmonic maps} to find a subsequence that converges weakly to a free boundary harmonic map $u: (\mc C_{-3l,3l}, \partial^C \mc C_{-3l, 3l}) \to (\mc N, \Gamma)$, and strongly in $W^{1,2}$ on any compact subset of $\mc C_{-3l,3l}$. Note that since $E(u^j)\leq\eps_2$, there will be no energy concentration points. By the uniform lower bound of $\int_{\mc C_{-l,l}}\vert u^j_\theta\vert^2$ and $W^{1, 2}$-convergence on $\mc C_{-l,l}$, $u$ can not be a constant map. Finally by the lower semi-continuity of energy along $W^{1, 2}$-weak convergence,
\[\int_{\mc C_{-l,l}}\vert u_\theta\vert^2 \geq \delta\int_{\mc C_{-3l,3l}}\vert\nabla u\vert^2,\]
which contradicts Theorem \ref{T:theta energy estimates for free boundary harmonic maps}.\vskip 0.5em

{\bf Case 2:} Suppose $\lim_{j\to\infty}E(u^j)=0$. We will use a blow-up argument. Let 
\[v^j=\frac{u^j-u^j(0)}{E(u^j)^{1/2}}.\]
This is a sequence of maps from $\mc C_{-3l, 3l}$ to $\mc N_j=(\mc N-u^j(0))/(E(u^j)^{1/2})$. Here $0=(0,0)$ is a boundary point on $\mc C_{-3l, 3l}$, so $u^j(0)\in\Gamma$; hence we can always see the free boundary $\Gamma_j=(\Gamma-u^j(0))/(E(u^j)^{1/2})$ in the blowup process.

Note that $E(v^j)=1$ and by (\ref{E:contradiction sequence for almost harmonic maps}) we have,
\[\int_{\mc C_{-l,l}}\vert v_\theta^j\vert^2>\delta>0.\]

Furthermore, the sequence of $v^j$'s are still $1/j$-almost harmonic because this property is invariant under dilation. So we can argue as before to get a subsequence that converges in $W^{1,2}$ on compact subset of $\mc C_{-3l,3l}$ to a free boundary harmonic map $v: \mc C_{-3l,3l}\to\mb R^n\subset \mb R^N$ with free boundary $\Gamma=\mb R^k\subset\mb R^N$. 
As before, we get
\[\int_{\mc C_{-l,l}}\vert v_\theta\vert^2\geq \delta,\]
which is again a contradiction to Theorem \ref{T:theta energy estimates for free boundary harmonic maps}; (note that for free boundary harmonic maps into $(\mb R^n, \mb R^k)$ we do not need the assumption $E(v)\leq \eps_2$; see also \cite[Remark B.7]{CM08}).
\end{proof}

With this lemma we can prove that the $\theta$-energy of a free boundary almost harmonic maps on a long half cylinder would be far less than the total energy.

\begin{thm}\label{T: Theta energy small for almost harmonic maps}
Given $\delta>0$ there exists $\nu>0$ depending on $\delta$, $\mc N$ and $\Gamma$, so that if $m$ is any positive integer and $u$ is $\nu$-almost harmonic from $\mc C_{-(m+3)l,3l}$ to $\mc N$ with free boundary along $u(\partial^C \mc C_{-(m+3)l,3l})\subset \Gamma$, and $E(u)\leq\eps_2$, then
\begin{equation}
\int_{\mc C_{-ml,0}}\vert u_\theta\vert^2\leq 7\delta\int_{\mc C_{-(m+3)l,3l}}\vert\nabla u\vert^2.
\end{equation}
\end{thm}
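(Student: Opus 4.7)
The plan is to reduce the theorem to Lemma \ref{L:theta energy estimates for free boundary almost harmonic maps} by a direct covering argument. For each $k=0,1,\dots,m$ I introduce the short sub-cylinder $C^s_k := \mc C_{-(k+1)l,\,-(k-1)l}$ (length $2l$, centered at $t=-kl$) and the enclosing long sub-cylinder $C^L_k := \mc C_{-(k+3)l,\,-(k-3)l}$ (length $6l$, same center). Since $0\leq k\leq m$, each $C^L_k$ sits inside $\mc C_{-(m+3)l,3l}$, which is precisely why the hypothesis requires the $3l$ buffer on either side. After translating so that $C^L_k$ becomes $\mc C_{-3l,3l}$ and $C^s_k$ becomes $\mc C_{-l,l}$, the plan is to apply Lemma \ref{L:theta energy estimates for free boundary almost harmonic maps} to $u|_{C^L_k}$ to obtain $\int_{C^s_k}\vert u_\theta\vert^2 \leq \delta \int_{C^L_k}\vert\nabla u\vert^2$ for every $k$.

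Summing in $k$ is then routine. The $C^s_k$ cover $\mc C_{-ml,0}$ (with multiplicity at least one), so $\int_{\mc C_{-ml,0}}\vert u_\theta\vert^2 \leq \sum_{k=0}^m \int_{C^s_k}\vert u_\theta\vert^2$. Since the $C^L_k$ each have length $6l$ and their centers are spaced $l$ apart, any point of $\mc C_{-(m+3)l,3l}$ lies in at most $7$ of them, giving $\sum_{k=0}^m \int_{C^L_k}\vert\nabla u\vert^2 \leq 7\int_{\mc C_{-(m+3)l,3l}}\vert\nabla u\vert^2$. Combining these with the per-$k$ bound yields
\begin{equation*}
\int_{\mc C_{-ml,0}}\vert u_\theta\vert^2 \;\leq\; \delta \sum_{k=0}^m \int_{C^L_k}\vert\nabla u\vert^2 \;\leq\; 7\delta \int_{\mc C_{-(m+3)l,3l}}\vert\nabla u\vert^2,
\end{equation*}
which is the claimed inequality.

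The main obstacle is verifying the $\mu$-almost harmonic hypothesis of Lemma \ref{L:theta energy estimates for free boundary almost harmonic maps} on each restriction $u|_{C^L_k}$, where $\mu=\mu(\delta)$ is the constant supplied by that lemma. Because almost-harmonicity is defined relative to the total energy of the cylinder on which the map lives, the restriction only inherits the \emph{absolute} bound: for every collection $\mc B$ of disjoint generalized balls in $C^L_k$, the harmonic-replacement error satisfies $\int_{\cup\frac{1}{8}B}\vert\nabla u-\nabla v\vert^2 \leq (\nu/2)\,E(\mc C_{-(m+3)l,3l})\leq \nu\eps_2/2$. For indices $k$ with $E(C^L_k)\geq(\nu/\mu)\,E(\mc C_{-(m+3)l,3l})$, this absolute bound at once implies that $u|_{C^L_k}$ is $\mu$-almost harmonic on $C^L_k$, and Lemma \ref{L:theta energy estimates for free boundary almost harmonic maps} applies directly. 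For the remaining $k$, where $E(C^L_k)$ is only a vanishing fraction of the total energy, I would revisit the contradiction proof of that lemma: the compactness input (via Theorem \ref{T:Compactness of free boundary almost harmonic maps}) requires only that the absolute almost-harmonic error be small, which it is; and the pointwise gradient estimate of Theorem \ref{e-reg}, applied to the harmonic replacement of $u|_{C^L_k}$, gives $\vert\nabla u\vert\lesssim l^{-1}\sqrt{E(C^L_k)}$ on the interior sub-cylinder $C^s_k$, which supplies the bound $\int_{C^s_k}\vert u_\theta\vert^2\leq \delta\, E(C^L_k)$ once $l=l(\delta)$ is taken sufficiently large for the $1/l$ factor to absorb $\delta$. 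Choosing $\nu$ small enough in terms of $\delta$, $\mc N$, and $\Gamma$ (and not in terms of $m$) to handle both ranges of $k$ uniformly completes the argument.
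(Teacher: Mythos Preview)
Your covering scheme --- sub-cylinders $C^s_k \subset C^L_k$ with the multiplicity-$7$ count --- is exactly what the paper intends by ``covering $\mc C_{-(m+3)l,3l}$ by sub-half cylinders of length $6l$'', and the summing step is clean. You are also right to flag the real obstacle: $\nu$-almost harmonicity on the long cylinder does not automatically yield $\mu$-almost harmonicity on each $C^L_k$, because the error is normalized by the \emph{global} energy.

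The gap is in your treatment of the small-energy sub-cylinders. Theorem~\ref{e-reg} bounds $\vert\nabla v\vert$ for the harmonic replacement $v$, not $\vert\nabla u\vert$; the only bridge back to $u$ is the almost-harmonic error $\int\vert\nabla u-\nabla v\vert^2\leq\tfrac{\nu}{2}\,E(\mc C_{-(m+3)l,3l})$, and in the small case this can dwarf $\delta\,E(C^L_k)$, so it cannot deliver the per-$k$ inequality. The fallback ``revisit the contradiction proof'' fails for the same reason: Case~2 of Lemma~\ref{L:theta energy estimates for free boundary almost harmonic maps} rescales by $E^{1/2}$, and an error bounded only by a fixed fraction of the global energy blows up under that rescaling. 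As written, the bad contribution would carry an $m$-dependent factor, which the theorem forbids.

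What actually closes the argument (and is the content of \cite[Proposition B.19]{CM08}) is the contrapositive of Lemma~\ref{L:theta energy estimates for free boundary almost harmonic maps} combined with the fact that the $\nu$-almost harmonic hypothesis controls the error over an arbitrary \emph{collection} of disjoint balls at once. Call $k$ \emph{bad} if $\int_{C^s_k}\vert u_\theta\vert^2>\delta\,E(C^L_k)$; then $u\vert_{C^L_k}$ is not $\mu$-almost harmonic, so there is a family $\mc B_k\subset C^L_k$ with replacement error exceeding $\tfrac{\mu}{2}E(C^L_k)$. Partition the bad indices into six residue classes modulo $6$, so that within each class the $C^L_k$ (hence the $\mc B_k$) are pairwise disjoint. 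Applying the global $\nu$-hypothesis to the union $\bigcup_{k\in J}\mc B_k$ for each class $J$ gives $\sum_{k\in J}\tfrac{\mu}{2}E(C^L_k)<\tfrac{\nu}{2}E(\mc C_{-(m+3)l,3l})$, hence $\sum_{\text{bad }k}E(C^L_k)<\tfrac{6\nu}{\mu}E(\mc C_{-(m+3)l,3l})$. Since $\int_{C^s_k}\vert u_\theta\vert^2\leq E(C^L_k)$ trivially, the bad contribution is at most $\tfrac{6\nu}{\mu}E(\mc C_{-(m+3)l,3l})$, which is absorbed into the $7\delta$ budget by taking $\nu$ small relative to $\mu(\delta)$ and $\delta$ --- uniformly in $m$.
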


\begin{proof}
The proof follows by covering $\mc C_{-(m+3)l,3l}$ by sub-half cylinders of length $6l$ together with Lemma \ref{L:theta energy estimates for free boundary almost harmonic maps}. We refer the details to the proof of \cite[proposition B.19]{CM08}. 
\end{proof}

The following simple lemma will be useful in the next subsection.
\begin{lemma}
\label{L:comparison between area and energy on long cylinders}
Suppose $u: \mc C_{-(m+3)l,3l}\to \mc N$ is a map satisfying 
\[\int_{\mc C_{-ml,0}}\vert u_\theta\vert^2\leq\frac{1}{9} E(u),\]
and
\[\int_{\mc C_{-(m+3)l,-ml}\cup \mc C_{0,3l}}\vert\nabla u\vert^2\leq \frac{1}{9}E(u).\]
Then 
\[ \area(u)\leq \frac{8}{9}E(u).\]
\end{lemma}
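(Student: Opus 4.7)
The plan is to split the half cylinder $\mc C_{-(m+3)l,3l}$ into the long middle piece $\mc C_{-ml,0}$, where the angular energy is small, and the two end pieces $E_\pm := \mc C_{-(m+3)l,-ml}\cup \mc C_{0,3l}$, where the full energy is small, and bound the area contribution on each region with a different pointwise inequality.

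On the middle piece $\mc C_{-ml,0}$, the key observation is the pointwise Jacobian bound $|u_t\wedge u_\theta|\le |u_t||u_\theta|$. An application of Cauchy--Schwarz to the two factors separately then yields
\[
\area(u|_{\mc C_{-ml,0}})\,\le\,\int_{\mc C_{-ml,0}}|u_t||u_\theta|\,dt\,d\theta\,\le\,\left(\int_{\mc C_{-ml,0}}|u_t|^2\right)^{1/2}\!\left(\int_{\mc C_{-ml,0}}|u_\theta|^2\right)^{1/2}.
\]
The first factor is trivially controlled by $E(u)^{1/2}$, and the first hypothesis bounds the second factor by $(E(u)/9)^{1/2}$, so the middle area contribution is at most (a constant multiple of) $\tfrac13 E(u)$.

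On the two end pieces, I would just invoke the universal pointwise inequality $|u_t\wedge u_\theta|\le \tfrac12(|u_t|^2+|u_\theta|^2)=\tfrac12|\nabla u|^2$, integrate, and use the second hypothesis to get an end contribution bounded by a constant multiple of $E(u)/9$.

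Adding the two contributions produces an upper bound of the form $\tfrac13 E(u)+ O(E(u)/9)$, which lies comfortably below $\tfrac{8}{9}E(u)$. There is essentially no analytical obstacle: the free boundary and the underlying map equation play no role here, the cylindrical geometry enters only through the product measure $dt\,d\theta$, and the gap between the bound one actually obtains and the claimed $\tfrac{8}{9}$ leaves plenty of slack to absorb whichever normalization convention is used to relate area and Dirichlet energy. The only thing to be a little careful about is to keep track of whether $E(u)=\int|\nabla u|^2$ or $\tfrac12\int|\nabla u|^2$ when plugging in the two hypotheses, but in either case the resulting constant is strictly less than $\tfrac{8}{9}$.
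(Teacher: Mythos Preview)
Your proof is correct and follows essentially the same decomposition as the paper: split the half cylinder into the middle piece $\mc C_{-ml,0}$ and the end pieces, bound the area integrand pointwise by $|u_t||u_\theta|$, and apply the two hypotheses on the respective regions. The only cosmetic difference is that on the middle piece you use the integral Cauchy--Schwarz inequality to obtain the bound $\tfrac{1}{3}E(u)$, whereas the paper uses the pointwise weighted AM--GM estimate $ab\le \tfrac{4}{3}a^2+\tfrac{3}{16}b^2$; your version is slightly cleaner and in fact yields a sharper final constant.
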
 

\begin{proof}
First note that
\[\area(u)=\int_{C_{-(m+3)l, 3l}}(\vert u_\theta\vert^2\vert u_t\vert^2-\langle u_\theta,u_t\rangle^2)^{1/2}\leq\int_{C_{-(m+3)l, 3l}}\vert u_\theta\vert\vert u_t\vert.\]

By $ab\leq \frac{4}{3}a^2+\frac{3}{16}b^2$, we get
\begin{equation}
\begin{split}
\int_{C_{-ml,0}}\vert u_\theta\vert\vert u_t\vert&\leq \frac{4}{3}\int_{C_{-ml,0}}\vert u_\theta\vert^2+\frac{3}{16}\int_{C_{-ml,0}}\vert u_t\vert^2\\
&\leq \frac{1}{3}\int_{C_{-ml,0}}\vert u_\theta\vert^2+\frac{3}{16}\int_{C_{-ml,0}}\vert u_t\vert^2+\frac{1}{9}E(u)\\
& \leq \frac{7}{9}E(u).
\end{split}
\end{equation}

Combining with the second assumption, 
we have 
\begin{equation}
\area(u)\leq \frac{7}{9}E(u)+\frac{1}{9}E(u)= E(u)-\frac{1}{9}E(u)\leq \frac{8}{9}E(u).\qedhere
\end{equation}
\end{proof}

\subsection{Proof of Theorem \ref{T:Compactness theorem}}
\label{SS:blowup analysis}

Now we are ready to prove the Theorem \ref{T:Compactness theorem}. We will follow the same scheme as in \cite[Section B.6]{CM08}. One key point in our setting is that we may get two different kinds of bubbles. We may get spherical bubbles as in \cite[Section B.6]{CM08}, as well as free boundary disk bubbles as \cite{Fraser00}. Those new techniques developed in previous sections will be essentially used to study these bubbles.\vskip 1.5mm

A boundary ball $B$ of $D$ is always the intersection of a classical ball $B_r(x)$ of $\mb R^2$ with $D$, i.e. $B=B_r(x)\cap D$ for some $x\in \partial D$ and $r>0$; in the following proof we say $r$ is the radius of $B$ and sometime abuse the notation to write $B=B_r(x)$.\vskip 1.5mm

We use $\tilde\Pi$ to denote a fixed conformal map that maps the upper half plane $\overline{\mb H^2}$ to the unit disk $\overline{D}$ which maps $(0, 1, \infty)$ to three given distinct points on $\partial D$. We denote $p^+$ as the image of $\infty$ and $D^-$ as the image of $D^+_1\subset\overline{\mb H^2}$ under the map $\Pi$.  For a given boundary ball $B_r(x)$, we define the \emph{Conformal Dilation of $B_r(x)$} to be the map $\Psi_{r,x}: \overline D\to \overline D$ so that $\Psi_{r,x}=\tilde\Pi\circ \Phi_{r,x}\circ \tilde\Pi^{-1}$, where $\Phi_{r,x}$ is the composition of a dilation of $\overline{\mb H^2}$ by the factor $1/r$ and a translation of $\overline{\mb H^2}$ by $-\tilde\Pi^{-1}(x)$; (note $\tilde\Pi^{-1}(x)$ is a boundary point of $\overline{\mb H^2}$).

\begin{proof}[Proof of Theorem \ref{T:Compactness theorem}]
We divide the whole proof into two parts. The first part is about the bubbling compactness, and the second part is about the energy identity. Note that bubbling convergence with energy identity implies varifold convergence by \cite[Proposition A.3]{CM08}; (although the domain of the maps is the sphere in \cite[Proposition A.3]{CM08}, the proof works in our case for disk domain with no change).  \vskip 0.5em

{\bf Bubbling convergence:} 
Let $u^j$ be a sequence as in the theorem, then property ($\dagger$) implies property ($B_0$) in Definition \ref{D:almost harmonic with free boundary}. By the compactness Theorem \ref{T:Compactness of free boundary almost harmonic maps}, we can find a free boundary harmonic map $v_0: (D, \partial D)\to (\mc N, \Gamma)$ (which maybe trivial), and a finite collection of singular points $\mc S_0\subset \overline D$, such that a subsequence (still denoted as $u^j$) converges to $v_0$ weakly in $W^{1,2}(D)$ and strongly in $W^{1,2}(K)$ for any compact subset $K\subset \overline D\setminus \mc S_0$. The measures $\vert \nabla u^j\vert^2dx$ converges to a measure $\nu_0$ with $\nu_0(\overline D)\leq E_0$ and at each singular point $x\in \mc S_0$, $\nu_0(\{x\})\geq\eps_1$.\vskip 1.5mm

Next we want to renormalize the maps near the singular points. Let us start with boundary points. Suppose $x\in\mc S_0$ lies on $\partial D$. Let $\eps_3>0$ be smaller than $\eps_1/2$ and $\eps_2$. Fix a radius $\rho>0$ so that $x$ is the only singular point in the boundary ball $B_{2\rho}(x)$ and $\int_{B_\rho(x)}\vert\nabla v_0\vert^2\leq\eps_3$. For each $j$, we choose $r_j>0$ to be the smallest radius so that
\[\inf_{y\in B_{\rho-r_j}(x)\cap \partial D}\int_{B_\rho(x)\backslash B_{r_j}(y)}\vert\nabla u^j\vert^2=\eps_3,\]
and choose a point $y_j\in\partial D$ such that $B_{r_j}(y_j)\subset B_\rho(x)$ with $\int_{B_\rho(x)\backslash B_{r_j}(y_j)}\vert\nabla u^j\vert^2=\eps_3$. Since $u_j$ converges strongly to $v_0$ on any compact subset of $B_\rho(x)\backslash \{x\}$, by the energy bound we get $y_j\to x$ and $r_j\to 0$. \vskip 1.5mm

For each $j$, since the energy functional is invariant under conformal changes, the dilated sequence of maps $\tilde u_1^j=u^j\circ \Psi_{r_j,y_j}$ still satisfies the almost harmonic property ($B_0$) in Definition \ref{D:almost harmonic with free boundary}, and they all have the same energy as $u^j$'s. Using the compactness Theorem \ref{T:Compactness of free boundary almost harmonic maps} again, we get a subsequence (still denoted as $\tilde u^j_1$), and a finite collection of singular points $\mc S_1\subset \overline{D}$, and a free boundary harmonic map $v_1:D\to \mc N$, so that $\tilde u_1^j$ converges to $v_1$ weakly in $W^{1,2}(D)$ and strongly in $W^{1,2}(K)$ for any compact $K\subset \overline{D}\backslash \mc S_{1}$. Moreover, the measures $\vert \nabla \tilde u_1^j\vert^2 dx$'s converge to a measure $\nu_1$ on $\overline{D}$. 

The choice of $B_{r_j}(y_j)$ guarantees that $\nu_1(\overline D\backslash\{ p^+\})\leq \nu_0(\{x\})$ and $\nu_1(D^-)\leq \nu_0(\{x\})-\eps_3$. Next we want to show the following claim: 
\vskip 2mm

\textit{Claim}: the maximal energy concentration at any $y\in\mc S_1\backslash\{p^+\}$ is at most $\nu_0(\{x\})-\eps_3$.

\textit{Proof of the claim}: note any such point $y$ satisfies $\nu_1(\{y\})\geq\eps_1>\eps_3$, hence it can only lies in $D^-$. Then the fact $\nu_1(D^-)\leq \nu_0(\{x\})-\eps_3$ implies that $\nu_1(\{y\})\leq \nu_0(\{x\})-\eps_3$.\vskip 2mm

Now we iterate this blowing up construction at every boundary singular point in $\mc S_0$ and $\mc S_1$, and we will get a subsequent singular sets $\mc S_0,\mc S_1,\mc S_2,\cdots$, and dilated sequences of maps $\{u^j\}, \{\tilde u^j_1\}, \{\tilde u^j_2\}, \cdots$, one more singular set after each blowing up process. From the claim we know that this process must terminate after at most $E_0/\eps_3$ steps, and we have in total $m$-singular sets $\mc S_0,\mc S_1, \cdots, \mc S_m$. Then for each sequence of dilated maps $\tilde u^j_\alpha$, $\alpha\in\{0, \cdots, m\}$, there are no boundary singular points away from $\mc S_\alpha$. Lastly we can apply the blowing up process in \cite[Appendix B.6]{CM08} to each $\tilde u^j_\alpha$ at those interior singular points, and finally get that the sequence $u^j$ converges to a collection of free boundary harmonic disks $v_0, v_1, \cdots, v_m: (D, \partial D)\to (\mc N, \Gamma)$ and harmonic spheres $\tilde v_1, \cdots, \tilde v_k: S^2\to \mc N$. Note that these harmonic spheres arise as blowup limits near interior singular points. \vskip 0.5em

{\bf Energy identity:} In this part we will show the summation of the energy of all $v_i$'s is equal to the limit of the energy of $u^j$'s. i.e. no energy was lost in the bubbling convergence process. \vskip 1.5mm

In order to prove this, we need to re-examine what happens to the energy during the blowing up process. The ``no loss of energy" for blowing up of interior singular points has already been proved by Colding-Minicozzi in \cite[Appendix B.6, Step 4]{CM08}, so we only need to analyze the case for blowing up at boundary singular points. \vskip 1.5mm

At each blowing up step, the energy is taken away from a singular point $x$ and then goes to one of two places:
\begin{itemize}
\item it can show up in the new limiting free boundary harmonic disk of a singular point in $\partial D\backslash{\{p^+\}}$,
\item or it can disappear at $p^+$.
\end{itemize}
In the first case, the energy is accounted in the final summation and no energy is lost. So we only need to rule out the energy loss in the second case. With out loss of generality, we can only prove the ``no loss of energy" for the first blow-up process, i.e. for the convergence of $\{\tilde u^j_1\}$. Note that if there is energy loss, then $\nu_1(\overline D\backslash\{p^+\})<\nu_0(\{x\})$.\vskip 1.5mm

We argue by contradiction. Suppose $\nu_1(\overline D\backslash\{p^+\})\leq \nu_0(\{x\})-\hat\delta$ for some $\hat\delta>0$. Note we must have $\hat\delta\leq\eps_3$. Thus we can choose two sequences of radii $s_j>t_j$ so that $A_j=B_{s_j}(y_j)\setminus B_{t_j}(y_j)$ are half cylinder with
\begin{equation}
s_j\to 0,\, \frac{t_j}{r_j}\to \infty,\, \text{ and } \int_{A_j}\vert\nabla u^j\vert^2\geq\hat\delta>0.
\end{equation}

Actually we may choose $s_j$ close to $\rho$ and $t_j$ close to $s_j/\lambda_j$ and $s_j/t_j>\lambda_j$ for a sequence of $\lambda_j\to\infty$. After a conformal change, $A_j$'s are a sequence of half cylinders with length goes to $\infty$. Moreover, there is quite a bit of freedom in choosing $s_j$ and $t_j$, i.e. we can slightly change $s_j,t_j$ a little bit and the above conditions are still satisfied. So we may also assume $u^j$ has small energy near the two ends of the half cylinder $A_j$ as the second condition in Lemma \ref{L:comparison between area and energy on long cylinders}. \vskip 1.5mm

Theorem \ref{T:theta energy estimates for free boundary harmonic maps} (with $\delta=1/63$) together with $\hat \delta\leq\eps_3\leq\eps_2$ implies the theta energy of $u^j$ on $A_j$ is small, so the first condition of Lemma \ref{L:comparison between area and energy on long cylinders} is satisfied for $j$ sufficiently large. Then by Lemma \ref{L:comparison between area and energy on long cylinders}, we get that the area of the image of $u^j$ on $A_j$ must be strictly less than the energy of $u^j$ on $A_j$ for $j$ large, which is a contradiction to the area assumption (\ref{E:area assumption in compactness theorem}). Thus we complete the proof.
 \end{proof}

\section{Modifications for the proof of Theorem \ref{T:main2} and discussions}
\label{S:Necessary modifications for the proof of fixed boundary min-max}

In this part, we record necessary modifications to adopt the proof of Theorem \ref{T:main1} to Theorem \ref{T:main2}. There are only two places where we have to do some modifications. 
\vskip 0.15in
\noindent{\bf Modification for Theorem \ref{T:area width equals energy width}.}
Again, the first step is to show that for given $\gamma(\cdot,t)\in\Omega$, we can approximate it by some $\tilde\gamma(\cdot,t)\in\Omega$ which lies in $C^0\big([0, 1], C^2(\overline{D}, \mc N)\big)$. So we need to do mollifications on $\gamma$. However, direct mollifications as what we did in Section \ref{S:Conformal parametrization} cannot work here, because the end-point maps $\gamma(\cdot,0)=\bar v_0$ and $\gamma(\cdot,1)=\bar v_1$ may change after mollifications. In order to handle this issue, we first mollify the whole family $\gamma(\cdot,t)$ as in Section \ref{S:Conformal parametrization} to get a continuous family of $C^2$ maps $\bar\gamma(\cdot,t): (D, \partial D)\to (\mc N, \Gamma)$; next by reparametrizing $t\to s(t)=(1-2\mu)t+\mu$, we get a new family $\tilde\gamma(\cdot, s)$ which is defined for $s\in[\mu,1-\mu]$. Moreover, since $s(t)\to t$ as $\mu\to 0$ and $\bar\gamma(\cdot,t)$ is $C^0$ as a function of $t$ to $C^2(\overline{D}, \mc N)$, for any given $\eps>0$, we can choose $\mu$ small such that $\max_{s\in[\mu,1-\mu]}\Vert\tilde\gamma(\cdot,s)-\bar\gamma(\cdot,s)\Vert_{C^2} \leq\eps$. \vskip 1.5mm

Note that $\gamma(\cdot, 0)$ and $\gamma(\cdot, 1)$ are both smooth, so by varying the mollification parameter we can connect $\gamma(\cdot,0)$ and $\gamma(\cdot,1)$ to $\tilde\gamma(\cdot, \mu)$ and $\tilde\gamma(\cdot, 1-\mu)$ respectively, and hence get a continuous family of $C^2$-maps $\tilde\gamma(\cdot,s): (D, \partial D)\to (\mc N, \Gamma)$ for $s\in[0,1]$. The slices of $\tilde\gamma(\cdot,s)$ for $s\in[0,\mu]$ are mollifications of $\gamma(\cdot,0)$ and the slices of $\tilde\gamma(\cdot,s)$ for $s\in[1-\mu,1]$ are mollifications of $\gamma(\cdot,1)$. Thus we get a regularization of $\gamma(t)$ which stay close to $\gamma(t)$ in $C^0(\overline D, \mc N)\cap W^{1, 2}(D, \mc N)$. The conformal reparametrization procedure works in the same way as Section \ref{S:Conformal parametrization}. Note that since $\tilde\gamma(\cdot, 0)=\bar v_0$ and $\tilde\gamma(\cdot, 1)=\bar v_1$ are both conformal harmonic maps, the pull-back metrics of $g$ on $\mc N$ are already conformal to the standard metric $g_0$ on $D$, so the conformal reparametrization maps $h(t)$ satisfy that $h(0)=h(1)=id$. Therefore after conformal reparametrization procedure, the end-point maps are still $\bar v_0$ and $\bar v_1$, so it is a legitimate sweepout homotopic to $\gamma$. We know for this family $\tilde\gamma(\cdot,s)$, the area and the energy are close.

\vskip 0.15in
\noindent{\bf Modification for Theorem \ref{T:Tightening theorem}.}
In the tightening process, we only want to pull tight the slice with energy larger than the energy of $\bar v_0$ and $\bar v_1$. 
Therefore, we need to modify the theorem so that in the statement, $(\ast)$ holds for those $t$ with $E(\tilde\gamma(\cdot,t))\geq W/\lambda$, where $\lambda>1$ is chosen with $W/\lambda>\max(\area(\bar v_0), \area(\bar v_1))$. All the proofs in Section \ref{S:Construction of the tightening process} work in exactly the same way by changing ``$W/2$" to ``$W/\lambda$".

\vskip 0.15in
\noindent{\bf Modification for Theorem \ref{T:Compactness theorem}.}
In the bubbling convergence procedure, we have one additional assumption on the sequence $u^j$, that is: when restricted to $\partial D$, $u^j: \partial D\to \Gamma$ has degree 1. This is because the restrictions of both $\bar v_0$ and $\bar v_1$ on $\partial D$ are degree 1 parametrizations of $\Gamma$ by the assumption, and each $u^j$ will be a continuous deformation from $\bar v_0$ or $\bar v_1$; (note that eventually we will let $u^j=\gamma^j(\cdot, s_j)$). \vskip 1.5mm

Now we show that among all the disk bubbles, when restricted to $\partial D$, one of them, say $v_0$, will have degree 1, and all others $\{v_1, \cdots, v_m\}$ will have degree 0. This is a simple corollary of the classical Courant-Lebesgue Lemma.  Note that in the last step of the proof of Theorem \ref{T:Compactness theorem} (section \ref{SS:blowup analysis}), two disk bubbles separate along a sequence half annuli $A_j=B_{s_j}(y_j)\setminus B_{t_j}(y_j)$, and the proof therein shows that the energy along these annuli converges to zero, i.e. 
\[ \lim_{j\to\infty}\int_{A_j}\vert\nabla u^j\vert^2=0.\]
Recall that we can choose $s_j$ and $t_j$ such that $\lim s_j/t_j=\infty$. By the Courant-Lebesgue Lemma, we can find a radius $\tilde t_j\in (t_j/\lambda_j, 2t_j/\lambda_j)$, so that 
\[ \lim_{j\to\infty}\int_{\partial B_{\tilde t_j}(y_j) \cap D}\vert\nabla u^j\vert^2=0. \]
This implies that the distances between the images of $u^j$ of the two end points of $\partial B_{\tilde t_j}(y_j) \cap D$ (which lie in $\partial D$)  converges to 0. Therefore,  when restricted to $\partial D$, among disk bubbles $v_0, v_1$, one of them has degree 1 whereas the other has degree 0.  By iterating this argument along each boundary blowup process, we can show that only one of the disk bubble has degree 1, and all others have degree 0 when restricted to $\partial D$.

\vskip 0.15in
\noindent{\bf Some further discussions.}
One main goal of the min-max construction for the fixed boundary problem is to produce a third non-minimizing minimal disk spanning $\Gamma$, which is a direct generalization of the work of Morse-Tompkins \cite{Morse-Tompkins39} and Shiffman \cite{Shiffman39} to the Riemannian setting. There is one issue left open in our current result. In fact, it will be good if one can restrict the sweepouts to all those $\sigma\in\Om_f$, where $\sigma(\cdot, t): \partial D\to \Gamma$ is a monotone parametrization for each $t\in [0, 1]$; if this could be done, our proof will show that one of the disk bubble has monotone boundary parametrization, and all other disk bubbles must map the boundary $\partial D$ to a point on $\Gamma$, so that they are punctured harmonic spheres. Thus if we assume additionally that the ambient manifold $\mc N$ has non-positive curvature, then by the uniqueness of harmonic maps all of these (punctured) harmonic spheres must be constant, so the min-max solution we obtain in Theorem \ref{T:main2} must be a third non-minimizing minimal disk. Our current mollification process could possibly destroy the monotonicity property.  It will also be good to reduce the regularity of the boundary curve $\Gamma$ to rectifiable curves, but our theory need a nice Fermi neighborhood of $\Gamma$, which requires it to be smooth.

\section{Appendix}

For the definition of Fermi coordinate system near $\Gamma\subset \mc N$ we refer to \cite[Appendix A]{LZ16}. We will say a quantity $A$ is $\alpha$-close to quantity $B$ if we have $1-\alpha\leq\frac{A}{B}\leq 1+\alpha$.

\begin{lemma}
\label{L:Fermi equivalent to Rn}
There is a constant $\kappa$ only depending on the $\mc N$ and $\Gamma$ and the embedding $\mc N\to\mb R^N$, such that for any $x\in\Gamma$, the $\kappa$-neighborhood $U_\kappa$ of $x$ has local Fermi coordinate system, satisfying the following condition:
let $g^1$ be the metric of $\mc N$ under Fermi coordinate, $g^2$ be the standard Euclidean metric under the Fermi coordinate system, and $g^3$ be the standard Euclidean metric of $\mb R^N$, then there is a constant $\alpha$ depending on $\kappa$ such that:

\begin{enumerate}
\item for any vector $V\in T_p\mc N\subset T_p\mb R^N$ where $p\in U_\kappa$, $\Vert V\Vert_{g^k}$ is $\alpha$-close to $\Vert V\Vert_{g^l}$ for $k,l\in\{1,2,3\}$;
\item for any pair of points $p_1,p_2\in U_\kappa$, $\Vert p_1-p_2\Vert_{g^2}$ is $\alpha$-close to $\Vert p_1-p_2\Vert_{g^3}$.
\end{enumerate}
\end{lemma}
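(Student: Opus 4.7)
The plan is to carry out a routine verification using the standard construction of Fermi coordinates together with a Taylor expansion at points of $\Gamma$, then upgrade pointwise closeness to uniform closeness via compactness of $\Gamma$. First, for each $x\in\Gamma$ I would choose an orthonormal frame $\{e_1,\dots,e_n\}$ of $T_x\mc N$ with $\{e_1,\dots,e_k\}\subset T_x\Gamma$ and $\{e_{k+1},\dots,e_n\}\subset N_x(\Gamma\subset\mc N)$, and construct Fermi coordinates $\Phi_x:V\subset\mb R^n\to U\subset\mc N$ by composing an exponential parametrization of $\Gamma$ with the normal exponential map. Composition with the isometric embedding gives $\tilde\Phi_x:=\iota\circ\Phi_x:V\to\mb R^N$. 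By construction, at $x$ the metric components satisfy $g^1_{ij}(x)=\delta_{ij}$, so $g^1=g^2$ at $x$; moreover $d\tilde\Phi_x\vert_0$ sends $\{\partial_i\}$ to the orthonormal set $\{e_i\}\subset T_x\mb R^N$, so $g^3$ agrees with $g^2$ at $x$ through this identification. Hence all three metrics coincide at the base point $x$.

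For claim (1), the coefficients of $g^1$ in Fermi coordinates are smooth functions of $p\in U$, and the coefficients induced on $V$ by pulling back $g^3$ via $\tilde\Phi_x$ are also smooth. Since these coefficients and those of the flat $g^2$ all agree at the origin, by continuity they differ by $O(\kappa)$ throughout a $\kappa$-neighborhood $V_\kappa$. Thus for any vector $V\in T_p\mc N$ and any pair $k,l\in\{1,2,3\}$, the ratio $\|V\|_{g^k}/\|V\|_{g^l}$ lies in $[1-\alpha,1+\alpha]$ once $\kappa$ is small. For claim (2), a Taylor expansion of $\tilde\Phi_x$ at $0$ gives
\[ \tilde\Phi_x(q_1)-\tilde\Phi_x(q_2)=d\tilde\Phi_x\vert_0(q_1-q_2)+R(q_1,q_2), \qquad \|R\|_{g^3}\le C\kappa\,\|q_1-q_2\|_{g^2}, \]
and since $d\tilde\Phi_x\vert_0$ is a Euclidean isometry from $(\mb R^n,g^2)$ onto its image in $(\mb R^N,g^3)$, this yields the desired $\alpha$-closeness of $\|p_1-p_2\|_{g^2}$ and $\|p_1-p_2\|_{g^3}$ after shrinking $\kappa$.

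The only point requiring care, and the main (mild) obstacle, is to make the constant $\kappa$ and the closeness parameter $\alpha$ uniform in $x\in\Gamma$. This is handled by compactness: $\Gamma$ is a smooth closed submanifold of the closed manifold $\mc N$, so there is a uniform tubular neighborhood on which Fermi coordinates exist, and by covering $\Gamma$ by finitely many trivializations of the tangent and normal bundles one obtains a finite family of smoothly varying frames $\{e_i(x)\}$ with uniformly bounded $C^2$-norms. All error terms in the two steps above then depend only on these uniform bounds, so a single choice of $\kappa=\kappa(\mc N,\Gamma,\iota)$ suffices to guarantee $\alpha$-closeness at every base point, completing the proof.
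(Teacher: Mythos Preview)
Your proposal is correct and follows the same overall strategy as the paper: verify that the three metrics agree at the base point $x$, use smoothness to get $\alpha$-closeness on a small neighborhood, and then invoke compactness of $\Gamma$ to make $\kappa$ uniform. For part~(1) your argument is essentially identical to the paper's.

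For part~(2) the two proofs diverge slightly. The paper inserts the intrinsic distance $\mathrm{dist}_{\mc N}(p_1,p_2)$ as an intermediary: it first shows $\|p_1-p_2\|_{g^2}$ is $\alpha$-close to $\mathrm{dist}_{\mc N}(p_1,p_2)$ by comparing curve lengths in $g^1$ and $g^2$ (using (1) and convexity of $U_\kappa$), and then argues that $\mathrm{dist}_{\mc N}(p_1,p_2)$ is $\alpha$-close to $\|p_1-p_2\|_{g^3}$ via the fact that $\mc N$ blows up to its tangent plane. Your route is more direct: a first-order Taylor expansion of the smooth map $\tilde\Phi_x:\mb R^n\to\mb R^N$ about $0$, together with the observation that $d\tilde\Phi_x|_0$ is a linear isometry. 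Your version avoids any appeal to geodesic convexity of $U_\kappa$ and packages the error cleanly as $C\kappa\|q_1-q_2\|_{g^2}$; the paper's version has the minor advantage of making the role of the intrinsic geometry of $\mc N$ explicit. Both are standard and equally valid.
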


\begin{proof}
We only need to show $\kappa$ exists for any $x\in\Gamma$, then by a covering argument we can prove $\kappa$ exists globally. Fix $x\in \Gamma$. 

(1) By the definition of Fermi coordinates, we can choose the Fermi coordinates such that $g^{1}_{ij}(x)=g^2_{ij}(x)=\delta_{ij}$. Then if $\kappa$ is small enough, locally $(1-\alpha)id \leq(g^{1})^{-1}g^2\leq (1+\alpha)id$, so we get (1) for $k,l\in\{1,2\}$. Since $M\to\mb R^N$ is an isometric embedding, the lengths of $V$ measured by $g^1,g^3$ are close when $\kappa$ is small enough, hence (1) is true.\vskip 1.5mm

(2) First we show $\Vert p_1-p_2\Vert_{g^2}$ is $\alpha$-close to the distance between $p_1,p_2$ in $\mc N$. Note for any curve connecting $p_1$ with $p_2$ in $U_{\kappa}$, the length of the curve evaluated by $g^1$ and $g^2$ should be $\alpha$-close, since the lengths of the tangent vectors of the curve are $\alpha$-close. Since the length between $p_1,p_2$ is the shortest distance among all curves, we see that $\Vert p_1-p_2\Vert_{g^2}$ is $\alpha$-close to the distance between $p_1,p_2$ on $\mc N$ (Note we can choose $\kappa$ small such that $U_\kappa$ is convex). \vskip 1.5mm
 
Since at each point $x$ of $\mc N$, scaling up makes $\mc N$ converge to the tangent space at $x$, thus the distance for points closed to $x$ on $\mc N$ is equivalent to the distance in $\mb R^N$. Then apply this argument to all Fermi neighbourhood with small $\kappa$ we get that $\Vert p_1-p_2\Vert_{g^3}$ is $\alpha$-close to the distance between $p_1,p_2$ in $\mc N$. It is easy to see that $\Vert p_1-p_2\Vert_{g^3}\leq \dist_{\mc N}(p_1,p_2)$, thus $\dist_{\mc N}(p_1,p_2)$ is $\alpha$-close to $\Vert p_1-p_2\Vert_{g^3}$ for some $\alpha$. Then we conclude this lemma.
\end{proof}

We also need this Poincar\'e type inequality with partial zero boundary values.
\begin{lemma}
\label{L:Poincare inequality with partial zero boundary values}
Given a function $f:D^+_r\to\mb R$ with $f\vert_{\partial_r^A}=0$, there exists a constant $C=C(r)$ such that 
\[\int_{D^+_r}f^2\leq C\int_{D^+_r}\vert\nabla f\vert^2\]
\end{lemma}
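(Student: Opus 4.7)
My plan is to prove this directly via a one-dimensional Poincar\'e argument along radial lines, exploiting the vanishing of $f$ on the arc $\partial_r^A$. Working in polar coordinates $(\rho,\theta)$ on $D_r^+$, the trace condition means $f(r,\theta)=0$ for a.e.\ $\theta\in(0,\pi)$, so by the fundamental theorem of calculus one has, for every $\rho_0\in(0,r)$,
\[ f(\rho_0,\theta) \,=\, -\int_{\rho_0}^{r}\partial_\rho f(\rho,\theta)\,d\rho. \]
Cauchy--Schwarz then yields the pointwise bound $|f(\rho_0,\theta)|^2 \le r\int_0^{r}|\partial_\rho f(\rho,\theta)|^2\,d\rho$.

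Next I would integrate this estimate against the area measure $\rho_0\,d\rho_0\,d\theta$ on $D_r^+$. Exchanging the order of integration gives
\[ \int_{D_r^+} f^2 \,\le\, r\int_0^{\pi}\int_0^{r}|\partial_\rho f(\rho,\theta)|^2\left(\int_0^{\rho}\rho_0\,d\rho_0\right)d\rho\,d\theta \,=\, \frac{r}{2}\int_0^{\pi}\int_0^{r}\rho^2\,|\partial_\rho f|^2\,d\rho\,d\theta. \]
Since $\rho\le r$ on $D_r^+$ and in polar coordinates $|\nabla f|^2 = |\partial_\rho f|^2 + \rho^{-2}|\partial_\theta f|^2 \ge |\partial_\rho f|^2$, we further bound
\[ \frac{r}{2}\int_0^{\pi}\int_0^{r}\rho^2\,|\partial_\rho f|^2\,d\rho\,d\theta \,\le\, \frac{r^2}{2}\int_0^{\pi}\int_0^{r}|\nabla f|^2\,\rho\,d\rho\,d\theta \,=\, \frac{r^2}{2}\int_{D_r^+}|\nabla f|^2, \]
which gives the claim with the explicit constant $C(r)=r^2/2$.

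I do not expect any serious obstacle here. The only minor technical point is justifying the fundamental theorem of calculus for a general $W^{1,2}$ function with vanishing trace on $\partial_r^A$; this is handled by a standard density argument, approximating $f$ by smooth functions that vanish in a neighborhood of the arc and passing to the limit in both sides of the final inequality. As an alternative route one could extend $f$ by even reflection across the chord $\partial_r^C$ to a function $\tilde f\in W^{1,2}_0(D_r)$ vanishing on the full boundary circle and invoke the standard Poincar\'e inequality on $D_r$; I prefer the radial approach above because it is self-contained, avoids a separate verification that the reflected extension has $W^{1,2}$ regularity, and produces an explicit constant in one step.
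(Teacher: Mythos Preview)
Your argument is correct and takes a genuinely different route from the paper. The paper simply extends $f$ to the full disk $D_r$ by even reflection across the chord $\partial_r^C$, obtaining a function in $W^{1,2}_0(D_r)$, and then invokes the classical Poincar\'e inequality; since reflection doubles both $\int f^2$ and $\int|\nabla f|^2$, the half-disk inequality follows with the same constant. You instead integrate radially from the arc where $f$ vanishes, which is self-contained and yields the explicit constant $C(r)=r^2/2$. You even mention the reflection route as an alternative and explain why you prefer the radial one; the paper just made the opposite choice, trading explicitness for a one-line reduction to a standard fact.

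One minor slip worth fixing: the pointwise bound you state, $|f(\rho_0,\theta)|^2 \le r\int_0^{r}|\partial_\rho f|^2\,d\rho$, has lower limit $0$, but the Fubini step in your next display (producing $\int_0^{\rho}\rho_0\,d\rho_0$) actually uses the sharper bound $|f(\rho_0,\theta)|^2 \le r\int_{\rho_0}^{r}|\partial_\rho f|^2\,d\rho$, which is exactly what Cauchy--Schwarz gives. This is only a typo in the lower limit; the displayed computation and the final constant are correct as written. The distinction is not cosmetic, though: with the loose bound one cannot recover the area weight $\rho\,d\rho\,d\theta$ on the right-hand side, so the sharper form is genuinely needed.
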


\begin{proof}
We just extend $f$ to whole $D_r$ by letting $f(x,-y)=f(x,y)$ under Cartesian coordinate. Then $f$ is a function on $D_r$ vanishes on boundary, hence we can apply the classical Poincar\'e inequality. Note this reflection doubles $\int f^2$ and $\int \vert\nabla f\vert^2$ at the same time, so we get the desired Poincare inequality.
\end{proof}

\subsection{Bubble convergence implies varifold convergence}

The main goal of this section is to prove that bubble convergence implies varifold convergence (c.f. \cite[Proposition A.3]{CM08}). Let us recall some notions from geometric measure theory; (for more details see \cite[Section 1.3]{CM08}, further details see \cite{Simon-GMT}). Let $\pi:G_2\mc N\to\mc N$ be the Grassmannian bundle of $2$-planes over $\mc N$, and let us consider the pairs $(X,F)$, where $X$ is a compact surfaces (not necessarily connected) and $F:X\to G_2\mc N$ is a measurable map such that $f:=\pi\circ F$ is in $W^{1,2}(X,\mc N)$. We also use $J_f$ to denote the Jacobian of $f$. We say that a sequence $X_i=(X_i,F_i)$ with uniformly bounded areas \emph{varifold converges} to $(X,F)$ if for all $h\in C^0(G_2\mc N)$ we have
\[\int_{X_i}h\circ F_i J_{f_i}\to \int_X h\circ F J_f.\]
This is a kind of weak notion of convergence of measures. 
There exists a distance function $d_V$, the varifold distance, which induces this topology.\vskip 1.5mm

Here is one important example: a varifold induced by a map. Let $u:D\to\mc N$ be a $W^{1,2}$-map, then a pair $(X,F)$ induced by $u$ is constructed as follows: $X$ is just $D$, and $F:X\to G_2\mc N$ is given by sending $x$ to $du(T_xX)$. This is only defined on the measurable space where $J_u$ is nonzero, but we can extend it arbitrarily to all of $X$ since the corresponding Radon measure on $G_2\mc N$ (i.e. the \emph{varifold induced by $u$}) given by $h\to\int_X h\circ F J_u$ is independent of the extension.\vskip 1.5mm

\begin{prop}
If a sequence $v^j: D\to \mc N$ of $W^{1,2}$-maps bubble converges to a finite collection of smooth maps $\{u_0,\cdots,u_m\}$ such that either $u_i: (D, \partial D)\to (\mc N, \Gamma)$ is a harmonic disk with free boundary or $u_i: S^2\to \mc N$ is a harmonic sphere, and the energy identity holds, then this sequence also varifold converges to the varifold induced by $\{u_0,\cdots,u_m\}$.
\end{prop}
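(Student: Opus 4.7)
The plan is to follow the scheme of \cite[Proposition A.3]{CM08}, adapted to accommodate free boundary disk bubbles in addition to spherical bubbles. Fix a test function $h \in C^0(G_2\mc N)$ with $\|h\|_\infty \leq 1$. The goal is to show that
\[ \int_D h \circ F_j\, J_{v^j}\,dx \,\longrightarrow\, \sum_{i=0}^m \int_{X_i} h\circ F_i\, J_{u_i}\,dx, \]
where $X_0 = D$ and $X_i$ is either $D$ or $S^2$ according to the type of bubble. Throughout we will use that varifold integrals are invariant under smooth reparametrization of the domain (by change of variables), and that $J_f \leq \tfrac{1}{2}|\nabla f|^2$ pointwise for any $W^{1,2}$-map.

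First I would set up the bubble decomposition as in the proof of Theorem \ref{T:Compactness theorem}. There is a main weak limit $u_0$, and each bubble $u_i$ arises from a conformal dilation $\Psi^j_i$ (either of an interior ball or a boundary ball, produced by the blow-up algorithm in Section \ref{SS:blowup analysis}) such that $v^j \circ \Psi^j_i \to u_i$ strongly in $W^{1,2}_{\mathrm{loc}}$ away from a finite set of further concentration points. Given $\eta>0$, I would choose compact exhaustions $K_i \subset X_i \setminus (\text{concentration points of the $i$-th scale})$ so that
\[ E(u_i) - E(u_i|_{K_i}) < \eta \quad \text{for all } i=0,\dots,m. \]
For $j$ large, the sets $K_0$ and $\Psi^j_i(K_i)$ are disjoint subsets of $D$, and the neck region $N_j := D \setminus \bigl(K_0 \cup \bigcup_{i\geq 1}\Psi^j_i(K_i)\bigr)$ carries energy
\[ E(v^j|_{N_j}) \,=\, E(v^j) \,-\, E(v^j|_{K_0}) \,-\, \sum_{i\geq 1} E\bigl((v^j\circ \Psi^j_i)|_{K_i}\bigr). \]

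By strong $W^{1,2}$ convergence on $K_0$ and on each $K_i$ (via $\Psi^j_i$), the last two sums on the right converge to $E(u_0|_{K_0}) + \sum_i E(u_i|_{K_i})$. The energy identity $\lim_j E(v^j) = \sum_i E(u_i)$ therefore forces
\[ \limsup_{j\to\infty} E(v^j|_{N_j}) \,\leq\, \sum_i \bigl(E(u_i) - E(u_i|_{K_i})\bigr) \,<\, (m+1)\eta. \]
This is the crucial input supplied by the energy identity: it guarantees that no area can hide in the neck regions. Indeed, using $J \leq \tfrac{1}{2}|\nabla v^j|^2$,
\[ \Bigl|\int_{N_j} h\circ F_j\, J_{v^j}\,dx\Bigr| \,\leq\, \tfrac{1}{2}E(v^j|_{N_j}) \,\leq\, \tfrac{m+1}{2}\eta + o(1). \]
On $K_0$ the strong $W^{1,2}$ convergence $v^j \to u_0$, together with the fact that for $W^{1,2}$ maps $f_j \to f$ strongly implies $F_j \to F$ in measure and $J_{f_j}\to J_f$ in $L^1$, yields $\int_{K_0} h\circ F_j J_{v^j} \to \int_{K_0} h\circ F_0 J_{u_0}$; by conformal invariance of the domain integral, the analogous statement holds for each bubble after pulling back by $\Psi^j_i$. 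Summing and letting $\eta\to 0$ gives the desired varifold convergence.

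The main obstacle, and the step where care is needed, is to ensure that the bubble domains $\Psi^j_i(K_i)$ for \emph{different} $i$ are eventually disjoint and lie in the correct nested bubble hierarchy, since a bubble can itself have sub-bubbles. I would handle this by induction on the number of scales produced by the blow-up algorithm of Section \ref{SS:blowup analysis}: at each scale the compact $K_i$ is chosen to avoid the concentration points of that scale, and the sub-bubbles of $u_i$ are then extracted at a further rescaling where the same argument applies. A secondary technicality is the convergence $F_j \to F$ and $J_{v^j} \to J_{u_i}$ on compact subsets where $v^j \to u_i$ strongly in $W^{1,2}$; this follows from $\nabla v^j \to \nabla u_i$ in $L^2$ plus continuity of $h$ on $G_2\mc N$, together with the fact that $J$ is a continuous quadratic expression in $\nabla v^j$, exactly as in \cite[Lemma A.1]{CM08}. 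Once these two points are verified, the proposition follows from the displayed three-piece decomposition above.
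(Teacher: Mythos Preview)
Your proposal is correct and follows essentially the same approach as the paper's proof: decompose $D$ into bubble regions (via the conformal dilations $\Psi^j_i$) and neck regions, use the energy identity together with $J\leq\tfrac12|\nabla v^j|^2$ to kill the neck contribution, and then pass to the limit on each bubble region using strong $W^{1,2}$ convergence and change of variables. The only cosmetic difference is that the paper takes the bubble domains $\Omega_i^j$ directly from the blow-up construction (so that $\Psi_{i,j}^{-1}(\Omega_i^j)$ exhausts $D\setminus\mc S_i$ or $S^2\setminus\mc S_i$) and handles the convergence of the Grassmannian lift by restricting to the superlevel set $\{J_{u_i}\geq\eps\}$ before applying dominated convergence, whereas you use a fixed compact exhaustion $K_i$ with an $\eta$-parameter and appeal to $L^1$-convergence of Jacobians; the two are equivalent.
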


We want to emphasis the energy equality, which appears in the last part of Theorem \ref{T:Compactness theorem}, plays a key role in the proof.

\begin{proof}(see also \cite[proof of Proposition A.3]{CM08}). 
For each $v^j$ we let $V^j$ denote the corresponding map to $G_2\mc N$. Similarly for each $u_i$ let $U_i$ denote the corresponding map to $G_2\mc N$. By the construction of bubble convergence (in the proof of Theorem \ref{T:Compactness theorem}), we can choose $m+1$ sequences of domains $\Omega_0^j,\cdots,\Omega_m^j\subset D$ that are pairwise disjoint for each $j$, so that for each $i=0,\cdots,m$ applying $\Psi_{i,j}^{-1}$ (the inverse of the corresponding conformal dilation) to $\Omega_i^j$ gives a sequence of domains converging to either $D\backslash \mc S_i$ (if it is a disk bubble) or $S^2\backslash \mc S_i$ (if it is a sphere bubble), and they account for all the energy by the energy identity, i.e.
\[\lim_{j\to\infty}\int_{D\backslash(\cup_i)\Omega_{i}^j}\vert\nabla v^j\vert^2=0.\]
In order to show varifold convergence, we only need to show for any $h\in C^0(G_2\mc N)$ that
\begin{displaymath}
\begin{split}
\int_D h\circ U_i J_{u_i}&=\lim_{j\to\infty}\int_{\Omega_i^j}h\circ V^j J_{v^j}\\
&=\lim_{j\to\infty}\int_{\Psi_{i,j}^{-1}(\Omega_i^j)}h\circ V^j\circ \Psi_{i,j}J_{(v^j\circ \Psi_{i,j})},
\end{split}
\end{displaymath}
where the last equality is the change of variable formula.\vskip 1.5mm

Given $\eps>0$ and $i$, let $\Omega_\eps^i$ be the set where $J_{u_i}\geq\eps$, then we only need to show
\[\int_{\Omega_\eps^i}h\circ U_i J_{u_i}=\lim_{j\to\infty}\int_{\Psi_{i,j}^{-1}(\Omega_i^j)} h\circ V^j\circ \Psi_{i,j}J_{(v^j\circ \Psi_{i,j})}.\]
Note $J_{v^j\circ \Psi_{i,j}}\to J_{u_i}$ in $L^1$ since $v_i^j\to u_i$ in $W^{1,2}$, so the measure of $\{x\in\Omega_\eps^i:J_{v^j\circ \Psi_{i,j}}<\eps/2\}$ goes to zero; the $W^{1,2}$-convergence implies that for given $\delta>0$, the measure of $\{x\in\Omega_\eps^i:J_{v^j\circ \Psi_{i,j}}\geq\eps/2,\text{ and }  |V^j\circ \Psi_{i,j}-U_i|\geq\delta \}$ goes to zero. Then by the dominated convergence theorem we get the desired identity. Thus we conclude the varifold convergence.
\end{proof}

\bibliographystyle{plain}
\bibliography{FB}
\end{document}